\newtheorem{theorem}{Theorem}[section]
\newtheorem{lemma}[theorem]{Lemma}
\newtheorem{corollary}[theorem]{Corollary}
\theoremstyle{definition}
\newtheorem{remark}[theorem]{Remark}
\renewcommand{\le}{\leqslant}
\renewcommand{\ge}{\geqslant}
\newcommand\ZZ{{\mathbb Z}}
\newcommand\RR{{\mathbb R}}
\newcommand\Bi{{\mathrm{Bin}}}
\newcommand\Po{{\mathrm{Po}}}
\newcommand\la{\lambda}
\renewcommand\Pr{{\mathbb P}}
\newcommand\E{{\mathbb E}}
\newcommand\Var{{\mathrm{Var}}}
\newcommand\Covar{{\mathrm{Cov}}}
\newcommand\cc{{\mathrm{c}}}
\newcommand\cG{\mathcal{G}}
\newcommand\cS{\mathcal{S}}
\newcommand\bb[1]{\bigl(#1\bigr)}
\newcommand\ceil[1]{\lceil#1\rceil}
\newcommand\Hrnp{H^r_{n,p}}
\newcommand\Hrsm{H^r_{s,m}}
\newcommand\Hrnpone{H^r_{n,p_1}}
\newcommand\Hrnpi{H^r_{n,p_i}}
\newcommand\ind[1]{\mathbbm{1}_{#1}}
\newcommand\bp{\mathfrak{X}}
\newcommand\Ei{E_\mathrm{int}}
\newcommand\Ep{E_\mathrm{per}}
\newcommand\tM{\tilde{M}}
\newcommand\tmu{\tilde{\mu}}
\newcommand\tsigma{\tilde{\sigma}}
\newcommand\tn{\tilde{n}}
\newcommand\bd{\bar{d}}
\newcommand\MM{S}
\begin{document}
\title{Counting dense connected hypergraphs via the probabilistic method}
\author{B\'ela Bollob\'as%
\thanks{Department of Pure Mathematics and Mathematical Statistics,
Wilberforce Road, Cambridge CB3 0WB, UK,
Department of Mathematical Sciences, University of Memphis, Memphis TN 38152, USA, and
London Institute for Mathematical Sciences, 35a South St., Mayfair, London W1K 2XF, UK.
E-mail: {\tt b.bollobas@dpmms.cam.ac.uk}.}
\thanks{Research supported in part by NSF grant DMS-1301614 and
EU MULTIPLEX grant 317532.}
\ and Oliver Riordan%
\thanks{Mathematical Institute, University of Oxford, Radcliffe Observatory Quarter, Woodstock Road, Oxford OX2 6GG, UK.
E-mail: {\tt riordan@maths.ox.ac.uk}.}}
\date{November 15, 2015; revised June 21, 2017}% (compiled \today)}
\maketitle

\begin{abstract}
In 1990 Bender, Canfield and McKay gave an asymptotic formula for the number of
connected graphs on $[n]=\{1,2,\ldots,n\}$ with $m$ edges, whenever $n\to\infty$
and $n-1\le m=m(n)\le \binom{n}{2}$.
We give an asymptotic formula for the number $C_r(n,m)$ of connected $r$-uniform hypergraphs on $[n]$
with $m$ edges, whenever $r\ge 3$ is fixed
and $m=m(n)$ with $m/n\to\infty$, i.e., the average degree
tends to infinity. This complements recent results of 
Behrisch, Coja-Oghlan and Kang (the case $m=n/(r-1)+\Theta(n)$) and the present authors
(the case $m=n/(r-1)+o(n)$, i.e., `nullity' or `excess' $o(n)$). The proof is based on probabilistic methods, 
and in particular on a bivariate local limit theorem for the number of vertices and edges
in the largest component of a certain random hypergraph. The arguments are
much simpler than in the sparse case; in particular, we can use `smoothing' techniques
to directly prove the local limit theorem, without needing to first prove a central limit
theorem.
\end{abstract}

%\subjclass{05C80; 05C65, 05C30}

\section{Introduction and results}

Our aim in this paper is to prove a result that can be viewed in two equivalent ways:
as an asymptotic formula for the number of dense connected $r$-uniform hypergraphs with a
given number of vertices and edges, and as a local limit theorem concerning the numbers
of vertices and edges in the largest component of a certain random hypergraph.
This paper is a companion to~\cite{smoothing}, where we used related (but much more complicated)
methods to study the sparse case. Here we shall phrase our results in terms of the number
of vertices and the number of edges, rather than considering the nullity as in~\cite{smoothing}.
(The latter is a more natural parameter when it grows slowly, but not here.)

Throughout the paper we consider $r$-uniform hypergraphs, where $r\ge 2$ is fixed;
much of the time $r\ge 3$. A hypergraph
is \emph{connected} if it cannot be written as the vertex disjoint union of two
strictly smaller hypergraphs. (This is not the only possible sense of connectedness
when $r\ge 3$, but it is the most important one, and the only one we consider here.)
A basic problem in enumerative combinatorics is to count the number of `irreducible'
objects of a certain type according to certain size parameters. Here we
study $C_r(s,m)$, the number of connected $r$-uniform hypergraphs
on $[s]=\{1,2,\ldots,s\}$ with precisely $m$ edges.
We write $s$ rather than $n$ for the number of vertices in part for notational consistency
with~\cite{smoothing}, but also because in the bulk of the paper $n\ne s$ will be the number
of vertices in a certain random hypergraph; see Section~\ref{sec_prob}.

An asymptotic formula for $C_2(s,m)$ (the graph case) was proved by 
Bender, Canfield and McKay~\cite{BCMcK} in 1990, throughout the range $s-1\le m\le \binom{s}{2}$.
For $r\ge 3$, in 1997 Karo\'nski and \L uczak~\cite{KL_sparse} proved a result covering
the case $m=s/(r-1)+o(\log s/\log\log s)$:
this result concerns hypergraphs that are very close to trees.
This was generalized to $m=s/(r-1)+o(s^{1/3})$ in an extended abstract of Andriamampianina and Ravelomanana~\cite{AR}
in 2006. Recently, in~\cite{smoothing}, we proved a result covering the entire `sparse case' $m=s/(r-1)+o(s)$.
A formula covering the `middle range' $m=s/(r-1)+\Theta(s)$ 
was given by Behrisch, Coja-Oghlan and Kang~\cite{BC-OK2a,BC-OK2b}.
Our main result here covers the entire remaining range, the `dense case' $m/s\to\infty$. As usual
in this context, the statement involves an implicit definition, and so requires a little preparation.

For $\xi\in (0,1)$ define
\begin{equation}\label{phidef1}
  \Phi_r(\xi) = \frac{ \log(1/\xi) (1-\xi^r)}{(1-\xi^{r-1})(1-\xi)}.
\end{equation}
It is easy to check that, with $r\ge 2$ fixed, $\Phi_r$ is strictly decreasing, since each of the ratios
$\log(1/\xi)/(1-\xi)$ and $(1-\xi^r)/(1-\xi^{r-1})$ is.
Moreover, $\Phi_r(\xi)\to r/(r-1)$ as $\xi\to 1$ and $\Phi_r(\xi)\sim\log(1/\xi)\to\infty$ as $\xi\to 0$.
It is this latter limit which will be important here.
Since $\Phi_r$ is continuous, it defines a bijection from $(0,1)$ to $(r/(r-1),\infty)$.

Given $\bd > r/(r-1)$ let
\begin{equation}\label{xidef}
 \xi=\xi(\bd)=\Phi_r^{-1}(\bd),
\end{equation}
and set
\begin{equation}\label{Fdef}
 F_r(\bd) = \bd \log(1-\xi) - \frac{\bd}{r}\log(1-\xi^r) -\frac{\xi}{1-\xi}\log \xi -\log(1-\xi).
\end{equation}

\begin{theorem}\label{thenum}
Let $r\ge 2$ be fixed, and let $m=m(s)$ satisfy $m/s\to\infty$ as $s\to\infty$.
Let $\bd=rm/s$ be the average degree of an $m$-edge $r$-uniform hypergraph on $[s]$, and
let $\Hrsm$ be such a hypergraph chosen uniformly at random. Then
the probability $P_r(s,m)$ that $\Hrsm$ is connected
satisfies
\begin{equation}\label{Pform}
 P_r(s,m)  \sim \exp(-sF_r(\bd))
\end{equation}
as $s\to\infty$.

Furthermore, if in addition $m=o(s^{4/3})$, then
the number $C_r(s,m)$ of connected $m$-edge $r$-uniform hypergraphs on $[s]$ satisfies
\begin{equation}\label{Cform}
 C_r(s,m) \sim e^{-(r-1)\bd/2-\ind{r=2} \bd^2/4}\frac{s^{rm}}{m!r!^m} \exp(-sF_r(\bd))
\end{equation}
as $s\to\infty$, where $\ind{A}$ is the indicator function of $A$.
\end{theorem}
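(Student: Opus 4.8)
\medskip\noindent\emph{Proof plan.}
The plan is to derive both halves of Theorem~\ref{thenum} from a bivariate local limit theorem for the pair $(\bm{\cC_1},e(\cC_1))$ of vertex and edge counts of the largest component $\cC_1$ of the binomial random hypergraph $\Hrnp$, for a suitably chosen pair $(n,p)$. The link between the two sides of the theorem is the exact identity
\begin{equation}\label{ppid}
 \Pr\bb{\bm{\cC_1}=s,\ e(\cC_1)=m}=\binom ns\,C_r(s,m)\,p^m(1-p)^{\binom nr-\binom{n-s}r-m},
\end{equation}
valid whenever $n<2s$: summing over the $\binom ns$ choices of a vertex set $S\subseteq[n]$ and the $C_r(s,m)$ connected structures on $S$, the right side is the expected number of components with exactly $s$ vertices and $m$ edges, and since no two components can have $s$ vertices when $n<2s$, this expectation equals the probability that some --- hence the largest --- component has this profile. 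We take $\xi=\Phi_r^{-1}(\bd)$ and then choose $n$ and $p$, guided by $n\sim s/(1-\xi)$ and $\binom{n-1}{r-1}p\to\mu:=\log(1/\xi)/(1-\xi^{r-1})$, so that the mean $(\bar a,\bar b)$ of $(\bm{\cC_1},e(\cC_1))$ lies within $O(\bd)$ of $(s,m)$. The equation $\Phi_r(\xi)=\bd$ records the consistency of this choice: $\xi=e^{-\mu(1-\xi^{r-1})}$ is the probability that a vertex lies outside the giant in the branching-process description of component exploration in $\Hrnp$, and then $\mu(1-\xi^r)/(1-\xi)=\Phi_r(\xi)$ is precisely the mean degree $\bd=rm/s$ inside $\cC_1$. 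Since $\bd\to\infty$ we have $\xi\to0$, so $n\sim s<2s$ and \eqref{ppid} applies.

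The core of the argument is a \emph{bivariate local limit theorem}: writing $\Sigma_n$ for the covariance matrix of $(\bm{\cC_1},e(\cC_1))$, one shows $\det\Sigma_n\to\infty$ and that $\Pr((\bm{\cC_1},e(\cC_1))=(a,b))$ is asymptotic to the corresponding bivariate Gaussian density, uniformly for $(a,b)$ in a neighbourhood of the mean containing $(s,m)$; in particular
\begin{equation}\label{ppllt}
 \Pr\bb{\bm{\cC_1}=s,\ e(\cC_1)=m}\sim\frac1{2\pi\sqrt{\det\Sigma_n}}\,.
\end{equation}
Granted \eqref{ppllt}, \eqref{Pform} follows: rearranging \eqref{ppid} through $C_r(s,m)=P_r(s,m)\binom{\binom sr}m$ and absorbing $p^m(1-p)^{\binom sr-m}$ into a binomial probability gives
\begin{equation*}
 P_r(s,m)=\frac{\Pr\bb{\bm{\cC_1}=s,\ e(\cC_1)=m}}{\binom ns\,(1-p)^{\binom nr-\binom sr-\binom{n-s}r}\,\Pr\bb{\Bi(\tbinom sr,p)=m}}\,,
\end{equation*}
and estimating the right side with \eqref{ppllt} and Stirling's formula for $\binom ns$ and for the binomial probability; re-expressing everything through $\xi$, $\bd$ and $n,p$ (using $n\sim s/(1-\xi)$, $\binom{n-1}{r-1}p\to\mu$, $\Phi_r(\xi)=\bd$ and the definition~\eqref{Fdef} of $F_r$) the polynomial factors cancel and the exponential rate is exactly $sF_r(\bd)$. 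Then \eqref{Cform} follows on multiplying back by $\binom{\binom sr}m$ and using that, when $m=o(s^{4/3})$,
\begin{equation*}
 \binom{\binom sr}m=\frac{s^{rm}}{m!\,(r!)^m}\exp\Bb{-\tfrac{(r-1)\bd}2-\ind{r=2}\tfrac{\bd^2}4+o(1)}\,;
\end{equation*}
here $\exp(-\tfrac12(r-1)\bd)$ comes from $\bigl(s(s-1)\cdots(s-r+1)\bigr)^m=s^{rm}\exp\bigl(m\sum_{i=1}^{r-1}\log(1-i/s)\bigr)$, the term $\ind{r=2}\bd^2/4$ from the falling-factorial correction $\binom sr(\binom sr-1)\cdots(\binom sr-m+1)=\binom sr^m\exp(-m^2/(2\binom sr)+O(m^3/\binom sr^2))$, and it is exactly the vanishing of the $O(m^3/\binom sr^2)$ remainder that needs $m=o(s^{4/3})$.

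The step I expect to be the main obstacle is the local limit theorem \eqref{ppllt}, which --- following~\cite{smoothing}, but far more easily in this dense setting --- we prove directly by `smoothing', never establishing a central limit theorem. Expose $\Hrnp=\Hrnpone\cup\Hrnptwo$ with $\Hrnpone,\Hrnptwo$ independent and $(1-p_1)(1-p_2)=1-p$, $p_2=\eps p$ for a small constant $\eps>0$. Condition on $\Hrnpone$, which whp already has a dominant giant $\cC_1'$; then the edges of $\Hrnptwo$ change the giant by an increment $\Delta=(\bm{\cC_1}-\bm{\cC_1'},\,e(\cC_1)-e(\cC_1'))\in\ZZ^2$ which, up to lower-order corrections, is a sum over the edges of $\Hrnptwo$ of bounded terms: an edge inside $\cC_1'$ contributes $(0,1)$, while an edge meeting $\cC_1'$ in fewer than $r$ vertices absorbs the smaller components it touches, contributing their vertex and edge counts together with the $(k,1)$ from its $k$ vertices outside $\cC_1'$. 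These terms span $\ZZ^2$ (already $(0,1)$ and $(1,1)$ do) and occur with probabilities bounded away from $0$ and $1$, so by a standard local limit theorem for sums of independent bounded lattice vectors the conditional law of $\Delta$ is smooth at lattice scale with covariance $\asymp\eps\Sigma_n$; convolving the law of $(\bm{\cC_1'},e(\cC_1'))$ with this smooth kernel yields \eqref{ppllt} on letting $\eps\downto0$. The real work lies in making the decomposition of $\Delta$ and the underlying coupling precise --- bounding the error when $\Hrnpone$ is not perfectly giant-dominated and when successive absorptions interact --- and in establishing the requisite moment asymptotics, in particular the non-degeneracy $\det\Sigma_n\to\infty$. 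One must also handle separately the regime where $m/s\to\infty$ so fast that $\cC_1$ omits only $O(1)$ vertices in expectation: there $\bm{\cC_1}$ is concentrated on $O(1)$ values, \eqref{ppllt} degenerates, and instead one chooses the small integer $n-s$ and the real parameter $p$ to match the (approximately Poisson) count of vertices missed by $\cC_1$ and a one-dimensional local limit theorem for $e(\cC_1)$ --- a simpler argument which nonetheless has to be carried out.
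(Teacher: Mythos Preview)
Your overall architecture---the identity \eqref{ppid}, the choice of $n$ and $p$ via $\xi=\Phi_r^{-1}(\bd)$ so that $(s,m)$ hits the mean, and the deduction of \eqref{Pform} and \eqref{Cform} from a bivariate LLT for $(\bm{\cC_1},e(\cC_1))$---matches the paper exactly, and your calculation sketch (rewriting via a binomial point probability, then Stirling) is essentially what the paper does. Your separate treatment of the regime where $n-\bm{\cC_1}=O(1)$ in expectation is also right; the paper handles $\log s-\bd$ bounded above by a direct Poisson argument for isolated vertices.

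The gap is in your proof of the LLT itself. You propose $p_2=\eps p$ with $\eps$ a small constant, note that the conditional increment $\Delta$ has covariance $\asymp\eps\Sigma_n$, and then write ``convolving the law of $(\bm{\cC_1'},e(\cC_1'))$ with this smooth kernel yields \eqref{ppllt} on letting $\eps\downto0$''. This does not work without a CLT for the base: a kernel smooth at lattice scale with covariance $\eps\Sigma_n$ only smooths the convolution at scale $\sqrt{\eps}\,\sigma$, not $\sigma$; to conclude the LLT at full scale you still need to know that the law of $(\bm{\cC_1'},e(\cC_1'))$ is already close to Gaussian at scale $\sigma$. That is precisely the CLT input you say you are avoiding, and it is exactly the input used in~\cite{C-OMS,BC-OK1,smoothing}. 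There is also a second problem with your increment model: in the dense regime the probability that a blue edge contributes $(1,1)$ (i.e.\ hits a vertex outside the giant) is $\Theta(e^{-d})\to 0$, not bounded away from $0$, so the ``standard LLT for sums of bounded lattice vectors with non-degenerate support'' does not apply to $\Delta$ as you describe it.

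The paper's key insight is to choose the split so that the smoothing variables carry asymptotically \emph{all} of the variance, not an $\eps$-fraction. Concretely, take $d_1=\sqrt{d}$, so that $d_2\sim d$ (almost every edge is blue), and do not track all blue-edge contributions but only two very specific kinds: \emph{internal} blue edges lying inside a fixed-size subset $B$ of the $H_1$-giant, and \emph{peripheral} blue edges of the form $\{v\}\cup\{r-1\text{ vertices of }B\}$ with $v$ a vertex that is otherwise isolated. Conditioning on $(H_1,H_2^-)$ (the reduced pair with these two edge types removed), the numbers $|\Ei|$ and $|\Ep|$ of such edges are \emph{independent binomials} with $\Var[|\Ep|]\sim ne^{-d}=\sigma_L^2$ and $\Var[|\Ei|]\sim dn/r=\sigma_M^2$. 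Since one computes directly that $\Var[L_1]\sim\sigma_L^2$ and $\Var[M_1]\sim\sigma_M^2$, and $(L_1,M_1)=(L+|\Ep|,\,M+|\Ep|+|\Ei|)$ with $(L,M)$ measurable with respect to $(H_1,H_2^-)$, the residual $(L,M)$ has variance $o(\sigma_L^2)$ and $o(\sigma_M^2)$ respectively. Thus no CLT is needed: the LLT for a pair of independent binomials transfers to $(L_1,M_1)$ by the trivial ``small offset'' lemma. The fix to your plan is therefore not to take $\eps$ small but to take it essentially $1$, and to isolate peripheral edges as a separate, carefully defined class so that the vertex-count increment becomes an honest binomial of the right variance.
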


Writing $N=\binom{s}{r}$, we have $P_r(s,m)=C_r(s,m)/\binom{N}{m}$, so the formulae \eqref{Pform}
and \eqref{Cform} are equivalent up to a straightforward calculation; see Lemma~\ref{lPC}.
(In fact, for $r\ge 3$, \eqref{Cform} applies for $m=o(s^{3/2})$, not just $m=o(s^{4/3})$.)

\begin{remark}\label{rem2}
We focus on the case $r\ge 3$, since the case $r=2$ is covered by the result of
Bender, Canfield and McKay~\cite{BCMcK}. For our proof strategy, there is very 
little difference between the two situations; some formulae have extra terms when $r=2$,
since then certain error terms involving factors of $n^{-(r-1)}$ are not
totally negligible. When convenient, we assume $r\ge 3$, commenting briefly
on these extra terms.
\end{remark}

\begin{remark}
As we shall show later (in Lemma~\ref{asy}), for $r\ge 3$ we have
\[
 \xi = e^{-\bd} + \bd e^{-2\bd} +  O(\bd^2e^{-3\bd})
%\frac{3\bd^2}{2} e^{-3\bd} + O(\bd^3 e^{-4\bd})
\]
and
\begin{equation}\label{Frd}
 F_r(\bd) = e^{-\bd} + \frac{\bd+1}{2}e^{-2\bd} + O(\bd^2 e^{-3\bd})
\end{equation}
as $\bd\to\infty$. (For $r=2$
the first term in the formulae above is the same as for $r\ge 3$, but the second
is different. The next term in the expansion~\eqref{Frd} is different in the cases
$r=2$, $r=3$ and $r\ge 4$.)
The probability that a vertex of $\Hrsm$ is isolated is
very close to $e^{-\bd}$, so the expected number of isolated vertices is approximately
$\mu=s e^{-\bd}$, and the Poisson intuition suggests that the probability that $\Hrsm$ has
no such vertex should be approximately $\exp(-\mu)$, at least when $\mu$ is bounded
or tends to infinity fairly slowly.
In turn, in this range we expect the presence of an isolated vertex to be the
main obstruction to connectivity. In the light of \eqref{Frd}, Theorem~\ref{thenum} says
that when $s\bd e^{-2\bd}=o(1)$ (corresponding to $\mu(s)=o(\sqrt{s/\log s})$), we have
\[
 P_r(s,m) \sim \exp(-se^{-\bd}).
\]
In other words, the intuition just described gives the right asymptotic answer in this (surprisingly large)
range. As a trivial special case, in the `very dense' case where $\bd-\log s\to\infty$, we
have $P_r(s,m)\sim 1$ and so $C_r(s,m)\sim \binom{N}{m}$ where $N=\binom{s}{r}$.
\end{remark}

\subsection{Comparison to related results}

The formulae appearing in Theorem~\ref{thenum} (the `dense case' $m/s\to\infty$), are superficially rather
different from those in Theorem 1.1 of~\cite{smoothing} (the `sparse case' $m=s/(r-1)+o(s)$)
and in (the corrected version of) Theorem 1.1 of Behrisch, Coja-Oghlan and Kang~\cite{BC-OK2b},
covering the `middle range' $m=s/(r-1)+\Theta(s)$. However, after a suitable change of notation,
they are actually rather similar, despite the different ranges of applicability.

Indeed, writing $\rho=1-\xi$, the definition of $\xi$ and hence of $\rho$ given by \eqref{xidef} is easily
seen to coincide with that in \cite{smoothing}. There, we set $\Psi_r(\rho)=(t-1)/s$
where $t=(r-1)m-s+1$ is the nullity, and $\Psi_r$ was given by a certain formula, (1.2) in~\cite{smoothing}.
Since $(t-1)/s=(r-1)m/s-1=(r-1)\bd/r-1$ in our present notation, and $\Psi_r(1-\xi)=(r-1)\Phi_r(\xi)/r-1$,
the quantities $\xi$ and $\rho$ here and in \cite{smoothing} are defined from the average degree $\bd$
in exactly the same way. Here we work mostly with $\xi$ rather than $\rho$ since in the dense
case $\xi\to 0$, which makes the asymptotics more intuitive. (In the sparse case $\rho\to 0$ instead.)

With $\xi=\Phi_r^{-1}(\bd)$ as above, since $\bd=rm/s$ we may write
\[
 \exp(-s F_r(\bd)) =
 \bb{ (1-\xi)^{-r}(1-\xi^r) }^m
  \bb{\xi^{\xi/(1-\xi)} (1-\xi)}^s.
\]
Equivalently, setting $\rho=1-\xi$,
\begin{equation}\label{Frho}
 \exp(-s F_r(\bd)) =
 \left(\frac{1- (1-\rho)^r}{\rho^r} \right)^m
  \bb{(1-\rho)^{(1-\rho)/\rho} \rho}^s .
\end{equation}
Thus we see that Theorem 1.1 of~\cite{smoothing} says exactly that
in the sparse case
\begin{equation}\label{Psparse}
 P_r(s,m) \sim e^{r/2+\ind{r=2}} \sqrt{\frac{3(r-1)}{2}} \exp(-s F_r(\bd) ).
\end{equation}

Turning to the middle range, as noted in the appendix to~\cite{smoothing}, the quantity
called $\Phi_d(r,\zeta)$ in~\cite{BC-OK2b} is exactly the right-hand side of \eqref{Frho}
above, i.e., simply $\exp(-s F_r(\bd))$ in our present notation.
Behrisch, Coja-Oghlan and Kang~\cite{BC-OK2b} write $\zeta$ for $\bd=rm/s$,
$d$ for the uniformity ($r$ here) and $r$ for what we call $\xi$. In our notation,
their main result says that in the middle range we have
\begin{equation}\label{universal1}
 P_r(s,m) \sim G_r(\bd) \exp(- s F_r(\bd) ),
\end{equation}
with
\begin{equation}\label{Gdef}
 G_r(\bd) = \frac{a_r(\bd)}{\sqrt{b_r(\bd)}} e^{g_r(\bd)},
\end{equation}
where, for $r\ge 3$,
\begin{eqnarray*}
 a_r(\bd) &=& 1-\xi^r -(1-\xi)(r-1)\bd \xi^{r-1}, \\
 b_r(\bd) &=& \bb{1-\xi^r +\bd(r-1)(\xi-\xi^{r-1})} (1-\xi^r) - r\bd \xi(1-\xi^{r-1})^2, \\
 g_r(\bd) &=& \frac{ (r-1) \bd (\xi-2\xi^r+\xi^{r-1}) } { 2(1-\xi^r)},
\end{eqnarray*}\
and
\begin{eqnarray*}
 a_2(\bd) &=& 1+\xi-\bd\xi, \\
 b_2(\bd) &=& (1+\xi)^2-2\bd\xi, \hbox{\quad and} \\
 g_2(\bd) &=& \frac{ 2\bd\xi+\bd^2\xi }{2(1+\xi)}.
\end{eqnarray*}

Since $\xi\sim e^{-\bd}$ as $\bd\to\infty$, it is trivial to check that as $\bd\to\infty$ we have
$a_r(\bd)\to 1$, $b_r(\bd)\to 1$ and $g_r(\bd)\to 0$. Hence
\begin{equation}\label{Grinf}
 G_r(\bd)\to 1 \hbox{\quad as\quad} \bd\to\infty,
\end{equation}
and \eqref{universal1} coincides
with our much simpler formula $\exp(-s F_r(\bd))$ in this case.

Finally, we noted in the appendix to~\cite{smoothing} (see equations (A.5) and (A.6))
that as $\bd\to r/(r-1)$ (corresponding to the nullity $t$ satisfying $t=o(s)$)
then $\xi\to 1$ and $G_r(\bd)\to e^{r/2+\ind{r=2}}\sqrt{\frac{3(r-1)}{2}}$, corresponding
to the pre-factor in \eqref{Psparse}. Collecting together these results,
we have the following
extension of the Bender--Canfield--McKay formula~\cite{BCMcK} to hypergraphs.

\begin{theorem}\label{thuniv}
Let $r\ge 2$ be fixed, and let $m=m(s)$ satisfy $m-s/(r-1)\to\infty$ and $m\le \binom{s}{r}$.
Then the proportion $P_r(s,m)$ of $m$-edge $r$-uniform hypergraphs on $[s]$ that
are connected satisfies
\begin{equation}\label{universal2}
 P_r(s,m) \sim G_r(\bd) \exp(-s F_r(\bd))
\end{equation}
where $\bd=rm/s$ is the average degree, and $F_r(\bd)$ and $G_r(\bd)$
are defined in \eqref{Fdef} and \eqref{Gdef}, with $\xi=\xi(r,\bd)$ defined in \eqref{xidef}.
\end{theorem}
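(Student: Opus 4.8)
The plan is to obtain Theorem~\ref{thuniv} by patching together the three asymptotic formulae already available, one for each range that the average degree $\bd=rm/s$ can occupy. Since \eqref{universal2} asserts that $P_r(s,m)/\bb{G_r(\bd)\exp(-sF_r(\bd))}\to 1$, it suffices to show that every subsequence has a further subsequence along which this holds. Since $m-s/(r-1)\to\infty$, we have $m>s/(r-1)$, hence $\bd>r/(r-1)$, for all large $s$, so $\xi=\Phi_r^{-1}(\bd)$, and thus $F_r(\bd)$ and $G_r(\bd)$, are well defined; moreover $m-s/(r-1)\to\infty$ is precisely the statement that the nullity $t=(r-1)m-s+1$ tends to infinity. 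Given a subsequence, pass to a further one along which $\bd=\bd(s)$ converges to some $L\in[r/(r-1),\infty]$ in the extended reals, and split into the cases $L\in(r/(r-1),\infty)$, $L=\infty$, and $L=r/(r-1)$.

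If $L\in(r/(r-1),\infty)$, then along the subsequence $\bd$ eventually lies in a compact subinterval of $(r/(r-1),\infty)$, i.e.\ $m=s/(r-1)+\Theta(s)$, and \eqref{universal2} is precisely the (corrected) main theorem of Behrisch, Coja-Oghlan and Kang~\cite{BC-OK2b}, restated as \eqref{universal1} with $G_r$ as in \eqref{Gdef}. If $L=\infty$, then $m/s\to\infty$, so Theorem~\ref{thenum} gives $P_r(s,m)\sim\exp(-sF_r(\bd))$, and it only remains to check that $G_r(\bd)\to 1$; this is \eqref{Grinf}, immediate from the formulae for $a_r,b_r,g_r$ together with $\xi=\xi(\bd)\to 0$ (recorded with more precision in Lemma~\ref{asy}), since then $a_r(\bd)\to 1$, $b_r(\bd)\to 1$ and $g_r(\bd)\to 0$. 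So \eqref{universal2} holds in both of these cases, for $r=2$ as well.

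Finally, if $L=r/(r-1)$, then $m=s/(r-1)+o(s)$ while still $t\to\infty$, so Theorem~1.1 of~\cite{smoothing}, restated as \eqref{Psparse}, applies and gives $P_r(s,m)\sim e^{r/2+\ind{r=2}}\sqrt{3(r-1)/2}\,\exp(-sF_r(\bd))$; so here one needs $G_r(\bd)\to e^{r/2+\ind{r=2}}\sqrt{3(r-1)/2}$ as $\bd\downto r/(r-1)$, equivalently as $\rho=1-\xi\to 0$. This is the one genuinely delicate point, and it is exactly the content of equations (A.5) and (A.6) in the appendix of~\cite{smoothing}: one first checks that $\bd-r/(r-1)$ vanishes to second order in $\rho$, so that expanding $\Phi_r^{-1}$ near $\bd=r/(r-1)$ and substituting into \eqref{Gdef} gives $g_r(\bd)\to r/2+\ind{r=2}$, while $a_r(\bd)$ and $b_r(\bd)$ each tend to $0$ — to orders $\rho^2$ and $\rho^4$ when $r\ge 3$ (resp.\ $\rho$ and $\rho^2$ when $r=2$) — these powers of $\rho$ cancelling in the ratio $a_r/\sqrt{b_r}$ to leave the constant $\sqrt{3(r-1)/2}$. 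Combining the three cases, $P_r(s,m)/\bb{G_r(\bd)\exp(-sF_r(\bd))}\to 1$ along the chosen sub-subsequence in each case, which proves the theorem. The one real obstacle is this last matching computation — no new probabilistic or combinatorial input is needed beyond the three cited theorems — together with the routine check that the three input ranges genuinely tile the hypothesis, which the subsequence trichotomy settles cleanly.
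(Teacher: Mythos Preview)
Your proposal is correct and follows essentially the same approach as the paper: pass to a subsequence so that $\bd$ converges in $[r/(r-1),\infty]$, then apply Theorem~1.1 of~\cite{smoothing}, Theorem~1.1 of~\cite{BC-OK2b}, or Theorem~\ref{thenum} according to whether the limit is $r/(r-1)$, a finite interior point, or $\infty$, and match the prefactor $G_r(\bd)$ in the two boundary cases using \eqref{Grinf} and (A.5)--(A.6) of~\cite{smoothing}. Your write-up is simply a more explicit version of the paper's four-line argument.
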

\begin{proof}
Passing to a subsequence we may assume that the average degree $\bd$ satisfies
one of the conditions (i) $\bd\to r/(r-1)$, (ii) $\bd\to c$ with $r/(r-1)<c<\infty$, or (iii)
$\bd\to\infty$. Then we apply Theorem 1.1 of~\cite{smoothing} (in the form \eqref{Psparse} above),
Theorem 1.1 of~\cite{BC-OK2b}, or Theorem~\ref{thenum} above, recalling~\eqref{Grinf}.
\end{proof}

In the Appendix, we show that the $r=2$ case of the formula \eqref{universal2} does indeed coincide
with the (very different looking) formula in~\cite{BCMcK}.
Note that while one \emph{can} use the same formula in all cases (sparse, middle and dense), in
the sparse and dense cases this does not make much sense in practice, since the
formula simplifies greatly in these cases. Note also that while we have shown that the Behrisch--Coja-Oghlan--Kang
formula applies in the sparse and dense ranges too, so far as we know their \emph{proof} does not adapt to these
cases. Indeed, the sparse case treated in~\cite{smoothing} seems to require more complicated arguments
despite the relative simplicity of the formula.

\subsection{Probabilistic reformulation}\label{sec_prob}

We shall prove Theorem~\ref{thenum} (which, despite the trivial use of probability in the statement,
is a purely enumerative result) by probabilistic methods. Indeed, as we shall see in Section~\ref{sec_enum},
up to a (rather lengthy) calculation, Theorem~\ref{thenum} is essentially equivalent to a
local limit theorem for the numbers of vertices and edges in the largest component
of a certain random hypergraph. This is a similar situation to that in~\cite{smoothing}.

Turning to the details, given $r\ge 2$, $n\ge 1$ and $0<p<1$, let $\Hrnp$ be the random
$r$-uniform hypergraph on $[n]$ in which each of the $\binom{n}{r}$ possible edges is included
with probability $p$, independently of the others.
From now until Section~\ref{sec_enum} (where we return to the enumerative viewpoint)
we fix $r\ge 2$ and consider a function $d=d(n)$ satisfying
\begin{equation}\label{dconds}
 d\to \infty\hbox{\quad and\quad} \log n-d \to\infty,
\end{equation}
as $n\to\infty$. We consider the random hypergraph $\Hrnp$ with
\begin{equation}\label{pdef}
 p = p(n) = d  \frac{(r-1)!}{n^{r-1}},
\end{equation}
noting that the expected degree of a vertex is $p\binom{n-1}{r-1} = d(1+O(1/n))$.
Writing
\[
 \mu_0 = n e^{-d},
\]
then $\mu_0$ is roughly the expected number of isolated vertices in $\Hrnp$; the significance
of the second condition in \eqref{dconds} is that it implies $\mu_0\to\infty$ as $n\to\infty$.

Given $d>1/(r-1)$, define $\xi=\xi(d) \in(0,1)$ by
\begin{equation}\label{xieqn}
 \xi = \exp(- d (1-\xi^{r-1})).
\end{equation}
Since $\log(1/\xi)/(1-\xi^{r-1})$ is strictly decreasing, this uniquely defines $\xi$.
In fact, $\xi$ is the extinction probability of a certain branching process naturally associated
to the neighbourhood exploration process in $\Hrnp$; see Section~\ref{sec_bp}. 
It is not hard to see that $\xi\to 0$ as $d\to\infty$.
Substituting this back into \eqref{xieqn}, it follows that $\xi=e^{-d+o(d)}$. From this
it follows that $d\xi^{r-1}=o(1)$ and hence, by \eqref{xieqn} again,
\begin{equation}\label{xid}
 \xi\sim e^{-d} \hbox{\quad as\quad} d\to\infty.
\end{equation}
When we come to relate the probabilistic and enumerative viewpoints,
the average degree parameters $d$ and $\bd$ will not be (quite) equal (and neither will
the numbers of vertices, $n$ and $s$). However, for $d$ and $\bd$ related as they will
be, $\xi(\bd)$ as defined in the previous section and $\xi(d)$ as defined above will coincide.

For further background on the phase transition in the component structure of $\Hrnp$ see,
for example, Section 2 of~\cite{smoothing}. Here, we are well above the critical edge density.
As is well known (and we shall show below), when $d\to\infty$ then with very high probability
$\Hrnp$ has a (necessarily unique) `giant' component containing almost all the vertices --
in fact, all but around $\mu_0$ vertices.

We say that a sequence $((X_n,Y_n))$ of random variables
taking values in $\ZZ^2$ satisfies a \emph{local limit theorem}
with parameters $(\mu_X(n),\mu_Y(n))$ and $(\sigma_X^2(n),\sigma_Y^2(n))$ if 
for any sequence $(x_n,y_n)\in \ZZ^2$ we have
\[
 \Pr( (X_n,Y_n) = (x_n,y_n) ) = 
 \frac{1}{2\pi \sigma_X\sigma_Y}
 \left( \exp\left(-\frac{(x_n-\mu_X)^2}{2\sigma_X^2}-\frac{(y_n-\mu_Y)^2}{2\sigma_Y^2}\right) + o(1) \right),
\]
where we have partially suppressed the dependence on $n$ in the notation.
Note that, considering `almost worst case' values of $x_n$ and $y_n$, the $o(1)$ term can
be taken to be uniform over all $(x_n,y_n)\in \ZZ^2$.

Let $L_1(H)$ and $M_1(H)$ denote the numbers of vertices
and edges in the largest component of a hypergraph $H$, where `largest' means
with the most vertices, and we break ties arbitrarily. Here, then, is our local limit
theorem for the number of vertices and edges in the giant component of $\Hrnp$.

\begin{theorem}\label{thLLT}
Let $r\ge 2$ be fixed, let $d=d(n)\to\infty$ with $\log n-d\to\infty$, and set
$p = p(n) = d \frac{(r-1)!}{n^{r-1}}$. Let $L_1=L_1(\Hrnp)$ and $M_1=M_1(\Hrnp)$. Then we have
\begin{equation}\label{ebds}
 \E[L_1] = (1-\xi) n +o(1) \hbox{\quad and\quad} \E[M_1] = \frac{d(1-\xi^r)}{r} n +O(d),
\end{equation}
where $\xi=\xi(d)$ is defined in \eqref{xieqn}, and
\begin{equation}\label{vbds}
 \Var[L_1] \sim \sigma_L^2 \hbox{\quad and\quad} \Var[M_1] \sim \sigma_M^2
\end{equation}
where
\begin{equation}\label{sLM}
 \sigma_L^2=\sigma_L^2(n) = n e^{-d} \hbox{\quad and\quad} \sigma_M^2 = \sigma_M^2(n) = \frac{dn}{r}.
\end{equation}
Furthermore, the pair $(L_1,M_1)$ satisfies a local limit theorem with parameters
$(\E[L_1], \E[M_1])$ and $(\sigma_L^2,\sigma_M^2)$.
\end{theorem}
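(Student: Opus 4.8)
The plan is to establish Theorem~\ref{thLLT} by combining a branching-process analysis of the neighbourhood exploration in $\Hrnp$ with a smoothing argument that converts concentration estimates into a local limit theorem, bypassing any central limit theorem. First I would set up the exploration process: starting from a uniformly random vertex, reveal its neighbourhood, then the neighbourhoods of newly discovered vertices, and so on. Since $p = d(r-1)!/n^{r-1}$, the number of vertices reached from a given vertex in one step is approximately $\mathrm{Po}(d)$-distributed (each of the $\sim n^{r-1}/(r-1)!$ potential edges through a fixed vertex is present with probability $p$, each contributing $r-1$ new vertices, but the dominant contribution to the \emph{number of components} comes from small components, where the relevant branching process is the one with offspring generating function tied to $\xi = \exp(-d(1-\xi^{r-1}))$). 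The extinction probability of this process is exactly $\xi$, so the expected number of vertices \emph{outside} the giant is $\sim n\xi \sim ne^{-d} = \mu_0$, giving $\E[L_1] = (1-\xi)n + o(1)$; the edge count follows since almost all edges lie in the giant and the expected number of edges is $p\binom{n}{r} \sim dn/r$, with a correction of order $d\xi^r \cdot n$ absorbed into the error, yielding $\E[M_1] = \frac{d(1-\xi^r)}{r}n + O(d)$.

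The heart of the argument is the variance computation and the local limit theorem. The key structural fact is that $L_1 = n - S$ and $M_1 = M - T$, where $M$ is the total number of edges (an exact $\mathrm{Bin}(\binom{n}{r},p)$ variable, hence with a known local limit behaviour), $S$ is the number of vertices outside the giant, and $T$ is the number of edges outside the giant. The fluctuations of $L_1$ are governed by $S$, which behaves like a sum of (almost independent) indicators that individual vertices lie in small components; this sum is approximately Poisson with mean $\sim ne^{-d} = \sigma_L^2$, so $\Var[L_1] \sim \sigma_L^2$. The fluctuations of $M_1$ are dominated by those of $M$ itself, since $T$ is small: $\Var[M] = \binom{n}{r}p(1-p) \sim dn/r = \sigma_M^2$, and one checks $\Var[T]$ and the covariance corrections are lower order, giving $\Var[M_1] \sim \sigma_M^2$. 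For the joint local limit theorem I would condition on the pair $(S,T)$ (equivalently on the ``small part''): given that the giant has $\ell$ vertices, the edges inside it that are not yet forced behave like an independent binomial, so conditionally $M_1$ has a one-dimensional local limit theorem with the right variance; then one integrates over the distribution of $(S,T)$, using that $S$ satisfies its own (Poisson-type, hence locally limiting) law and that the conditional mean of $M_1$ shifts linearly and smoothly with $\ell$. Concretely, I would use a smoothing/Fourier-free convolution argument: write $\Pr((L_1,M_1)=(x,y))$ as an average over small-component configurations of binomial point probabilities, and exploit that binomial probabilities vary slowly on the scale of their standard deviation (the ratio of consecutive terms is $1+O(1/\sigma)$), so the average is well approximated by a Gaussian density evaluated at $(x,y)$.

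The main obstacle I anticipate is controlling the dependence between $L_1$ and $M_1$ precisely enough — in particular, showing that after removing the deterministic-looking part, the residual correlation is negligible, so that the limiting density genuinely factorizes as in the definition of a local limit theorem with diagonal covariance. This requires a careful second-moment analysis of the joint distribution of $(S,T)$, i.e.\ of the number of vertices and edges in components other than the giant, and a proof that conditioning on $L_1 = x$ does not distort the conditional distribution of the number of ``internal'' edges of the giant away from binomial. A secondary technical point is handling the range where $d$ is close to $\log n$ (so $\mu_0 = ne^{-d}$ grows only slowly): there the Poisson approximation for $S$ must be uniform, and one must ensure the exploration-process estimates have error terms that are $o(\sigma_L)$ and $o(\sigma_M)$ uniformly. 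I expect the branching-process coupling (Section~\ref{sec_bp}) and standard concentration inequalities (Azuma/Bernstein for the exploration) will suffice for the expectations and variances, with the smoothing step — rather than any delicate probabilistic estimate — doing the real work of producing the local limit statement.
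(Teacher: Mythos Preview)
Your treatment of the expectations and variances is essentially the paper's: the branching-process/tree-component calculus gives $\E[L_1]=(1-\xi)n+o(1)$ and $\Var[L_1]\sim ne^{-d}$, while $\Var[M_1]$ is dominated by $\Var[e(\Hrnp)]\sim dn/r$. That part is fine.

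The gap is in the local limit step. You propose to condition on the ``small part'' $(S,T)$ and then argue that, given the giant has vertex set $V'$, the edges inside $V'$ are (approximately) an independent binomial. But they are not: once you condition on $V'$ being the vertex set of the largest component, the edges inside $V'$ are conditioned to make $V'$ connected, and the edges crossing $V'$ are conditioned to be absent. So the conditional law of $M_1$ given $L_1$ is not a binomial; it is a binomial \emph{conditioned on connectedness}, which is exactly the object whose distribution you are trying to determine. Your own ``main obstacle'' paragraph identifies this issue but does not resolve it, and a Poisson LLT for $S$ alone does not help with the joint law.

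The paper's device that you are missing is a two-round exposure. Write $H=H_1\cup H_2$ with $H_i\sim H^r_{n,p_i}$ independent, choosing $d_1=\sqrt{d}$ (so $d_2\sim d$). Reveal $H_1$ first: with very high probability it already has a giant component, and one fixes a deterministic-size subset $B$ of it, together with a set $P$ of ``peripheral'' vertices (isolated in $H_1$ and essentially untouched by $H_2$ except possibly by one edge into $B$). Now the blue edges of $H_2$ inside $B$ and the blue edges from $P$ into $B$ are genuinely fresh Bernoulli variables, with \emph{no} connectivity constraint, because $B$ is already connected by $H_1$. This yields a clean decomposition $L_1=L+X$, $M_1=M+X+Y$ with $(X,Y)$ a pair of independent binomials that are independent of $(L,M)$. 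The crucial quantitative point is that the parameters are chosen so that $\Var[X]\sim\sigma_L^2$ and $\Var[Y]\sim\sigma_M^2$: the smoothing variables carry asymptotically \emph{all} of the variance, so $\Var[L]=o(\sigma_L^2)$ and $\Var[M-L]=o(\sigma_M^2)$ follow automatically from the already-proved $\Var[L_1]\sim\sigma_L^2$, $\Var[M_1]\sim\sigma_M^2$, and the LLT for $(L_1,M_1-L_1)$ drops out of the binomial LLT for $(X,Y)$ via a trivial convolution lemma. Your plan has the right flavour (smoothing, no CLT) but lacks this two-round trick that manufactures the independence you need.
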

\begin{remark}
In the light of \eqref{ebds} and \eqref{vbds}, 
it is easy to check that we may replace the parameters for the means in the local limit theorem
above by $((1-\xi)n,d(1-\xi^r)n/r)$; see Lemma~\ref{parshift} below.
The key point is that the error terms $o(1)$ and $O(d)$ in \eqref{ebds} are (much) smaller than
$\sigma_L$ and $\sigma_M$, respectively.
%Similarly, we may replace the variance parameters
%$(\sigma_L^2,\sigma_M^2)$ by $(\Var[L_1],\Var[M_1])$ if we wish.
\end{remark}

Although we are not aware of such accurate estimates for $\E[L_1]$ and $\E[M_1]$ in the literature, 
these are relatively straightforward.
The main point of Theorem~\ref{thLLT} is the local limit theorem. Analogous results
for the sparse regime ($d=1/(r-1)+o(1)$) and the `middle range' $d=1/(r-1)+\Theta(1)$
were proved in~\cite{smoothing} and by Behrisch, Coja-Oghlan and Kang~\cite{BC-OK2a}.
Indeed, recalling that the `nullity' $N_1$ studied in~\cite{smoothing} is defined to
be $(r-1)M_1-L_1+1$, after a little manipulation it is not hard to check that the
quantities $\rho_{r,\la}$ and $\rho_{r,\la}^*$ appearing in \cite{smoothing} as
approximations to $\E[L_1]/n$ and $\E[M_1]/n$ correspond to $(1-\xi)$  and
$(r-1)d (1-\xi^r)/r-(1-\xi)$ here. In other words, the formulae for the means match up;
the asymptotics of the variances are different in the different ranges considered
here and in~\cite{smoothing}.

The rest of the paper is organized as follows. In Sections~\ref{sec_bp}--\ref{sec_means}
we prepare the ground for the proof of Theorem~\ref{thLLT}. These sections contain
lemmas concerning, respectively, a certain branching process, basic properties of $\Hrnp$, and the
mean and variance of $L_1$ and $M_1$. Then, in Section~\ref{sec_pf} (the heart of the paper),
we use `smoothing' arguments to prove Theorem~\ref{thLLT}. Finally, in Section~\ref{sec_enum}
we deduce Theorem~\ref{thenum} via a somewhat involved calculation. In the Appendix,
we compare the $r=2$ case of our enumerative formula with that of Bender, Canfield and McKay.

\section{Branching process preliminaries}\label{sec_bp}

Given an integer $r\ge 2$ and a real number $d>0$, let $\bp_{r,d}$
be the Galton--Watson branching process defined as follows.
Start in generation $0$ with one individual. Each individual in generation $t$ has a random number
of \emph{groups} of $r-1$ children, with the number of groups having a Poisson distribution $\Po(d)$.
These children make up generation $t+1$. The numbers of children of all individuals in generation $t$
are independent of each other and of the history.

The branching process $\bp_{r,d}$ can be naturally viewed as a (possibly infinite) $r$-uniform hypergraph,
with a vertex for each individual, and a hyperedge for each group of children, consisting of these children
together with their parent. This hypergraph is of course an \emph{$r$-tree}, by which we simply
mean an $r$-uniform hypergraph that is a tree. (Often, when there is no danger of confusion,
we simply write `tree'.)
We write $|\bp_{r,d}|$ and $e(\bp_{r,d})$ for the number of vertices and edges in this $r$-tree, noting that
\[
 |\bp_{r,d}| = 1 + (r-1)e(\bp_{r,d}) \le\infty.
\]

Let $\rho=\rho_{r,d}=\Pr(|\bp_{r,d}|=\infty)$ be the probability that the branching process $\bp_{r,d}$
\emph{survives} forever, and $\xi=1-\rho$ its \emph{extinction probability}. Elementary properties
of branching processes (see, e.g., Athreya and Ney~\cite{AN})
imply that $\xi$ is the smallest solution in $[0,1]$
to the equation \eqref{xieqn}.
Moreover, if the branching factor $\la=(r-1)d$ is strictly greater than $1$, then \eqref{xieqn} has a
unique solution in $[0,1)$, and in particular $\xi<1$; otherwise, $\xi=1$.
Indeed, $\xi^{r-1}$ is the probability that all $r-1$ children
in a given group lead to finite trees, so $1-\xi^{r-1}$ is the probability that a given group
\emph{survives}, i.e., has infinitely many descendants.
From thinning properties of Poisson distributions, the number of such
surviving groups of children of the root is Poisson with mean $d(1-\xi^{r-1})$, and $\xi$ is exactly
the probability that this number is $0$. Hence $\xi$ satisfies \eqref{xieqn};
it is not hard to see that $\xi$ is indeed the smallest solution to this equation.

For comparison with the results in \cite{smoothing}, in terms of $\rho=1-\xi$ we may write \eqref{xieqn} as
\begin{equation}
 1-\rho = \exp( - d (1-(1-\rho)^{r-1})),
\end{equation}
which, writing $\la$ for $(r-1)d$, matches the definition given by (2.1) and (2.2) in~\cite{smoothing}.
Here, where $d$ tends to infinity, we will have $\xi\to 0$,
so $\xi$ is a more natural parameter to work with than $\rho$. This contrasts with the
situation in \cite{smoothing}, where $\rho\to 0$.

\subsection{The dual process}\label{sec_dual}

In the branching process $\bp_{r,d}$,
the \emph{groups} of children of the root may be classified into two types as above; those that
survive, and those that do not. By thinning properties of Poisson distributions, the number of groups
that do not survive has a Poisson distribution with mean $d\xi^{r-1}$, and is independent
of the number that do survive. For $d>1/(r-1)$ (the supercritical case) define the
\emph{dual parameter} $d^*$ by 
\begin{equation}\label{dstardef}
 d^* = d\xi^{r-1}.
\end{equation}
Then it follows that the conditional distribution of $\bp_{r,d}$ given $|\bp_{r,d}|<\infty$ is exactly
the unconditional distribution of the \emph{dual process} $\bp_{r,d^*}$.
It is not hard to check that the dual parameter coincides with that defined in \cite{smoothing};
the key point is that \eqref{dstardef} and \eqref{xieqn} imply $d^* e^{-(r-1)d^*} = d e^{-(r-1)d}$.

\subsection{Point probabilities}\label{ss_pp}

For $0\le k<\infty$ define
\[
 \pi_k = \pi_{k,r,d} = \Pr(e(\bp_{r,d})=k).
\]
The next lemma is a standard calculation, specialized to the particular offspring distribution
we have here. In this lemma, and much of this and the next two sections,
we adopt the convention of writing $s=1+(r-1)k$ for the number of vertices of an $r$-tree with
$k$ edges; this will make the formulae concerning `small' tree components much more concise.
\begin{lemma}\label{pkbd}
For any $r\ge 2$, $d>0$ and $k\ge 0$ we have
\[
 \pi_k = \pi_{k,r,d} = \frac{1}{s} \Pr(\Po(ds)=k) = \frac{s^{k-1}d^k}{k!} e^{-ds},
% \frac{e^{-d}}{sk!} \bb{sde^{-(r-1)d}}^k.
\]
where $s=1+(r-1)k$.
\end{lemma}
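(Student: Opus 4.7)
My plan is to encode the tree $\bp_{r,d}$ by a random walk and apply the cycle lemma. I would explore the (possibly infinite) tree one vertex at a time in breadth-first order, starting from the root. When the $i$-th vertex is processed, let $G_i\sim\Po(d)$ be the number of groups of children it spawns, contributing $G_i$ edges and $(r-1)G_i$ new vertices to the queue. By construction the $G_i$ are i.i.d.\ $\Po(d)$. Set $\xi_i=(r-1)G_i-1$, the net change in the number of still-to-be-explored vertices after step $i$, and let $S_j=1+\xi_1+\cdots+\xi_j$. Then the tree is finite with exactly $s$ vertices (equivalently, $k$ edges, where $s=1+(r-1)k$) iff $S_s=0$ while $S_j>0$ for $0\le j<s$.

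Observe that $\sum_{i=1}^s\xi_i=(r-1)k-s=-1$ and that each step satisfies $\xi_i\ge -1$. The cycle lemma for lattice paths with left-continuous steps summing to $-1$ then tells us that exactly one of the $s$ cyclic rotations of $(\xi_1,\dots,\xi_s)$ is a first-passage walk. Therefore
\[
  \pi_k \;=\; \Pr\bb{S_s=0,\;S_j>0\text{ for }j<s} \;=\; \frac{1}{s}\,\Pr\Bb{\sum_{i=1}^s G_i=k}.
\]

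Finally, since the sum of $s$ independent $\Po(d)$ variables is $\Po(ds)$, the right-hand side is
\[
  \frac{1}{s}\cdot\frac{(ds)^k e^{-ds}}{k!} \;=\; \frac{s^{k-1} d^k}{k!}\,e^{-ds},
\]
which is the claimed formula. There is no genuine obstacle here; the only thing to watch is the bookkeeping in setting up the walk (one initial active vertex, each group adding $r-1$ active vertices and removing the current one), after which the cycle lemma applies in its standard form.
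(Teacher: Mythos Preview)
Your proof is correct and is essentially identical to the paper's: both explore the tree vertex-by-vertex, encode it by the i.i.d.\ Poisson group counts, and apply the cycle lemma (the paper calls it Spitzer's Lemma) to the resulting walk with increments $(r-1)G_i-1$. The only cosmetic difference is that you phrase things in terms of the random walk $S_j$ while the paper writes out the equivalent conditions on the partial sums $\sum_{i\le j}(r-1)a_i$ directly.
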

\begin{proof}
Consider the following alternative way of generating a random rooted $r$-tree.
Let $(a_1,a_2,\ldots)$ be independent and identically distributed, with $a_i\sim\Po(d)$.
Start with the root in generation $0$. Construct $a_1$ groups of $r-1$ children
of the root. Then proceed through these children one-by-one, assigning each child $a_i$
groups of $r-1$ children of its own, for $i=2,3,\ldots$. Continue with the new individuals 
(if any) in generation 2, and so on. By the definition of $\bp_{r,d}$ this tree has
the same distribution as $\bp_{r,d}$.
This construction stops if and only if, for some $i$, the first $i$ individuals `explored'
have between them at most, and hence exactly, $i-1$ children, i.e., $\frac{i-1}{r-1}$ \emph{groups}
of children.
This can only happen for $i\equiv 1$ modulo $r-1$. For $k\ge 0$ and $s=1+(r-1)k$, let
$\cS_k$ be the set of sequences $(a_i)_{1\le i\le s}$ of non-negative integers with the properties

\smallskip
(i) $\sum_{i\le s} a_i=k$ and

\smallskip
(ii) for all $1\le j<s$, $\sum_{i\le j} (r-1)a_i \ge j$.

\smallskip\noindent
Note that condition (i) may be written as $\sum_{i\le s} (r-1)a_i=s-1$. From the construction
above, $e(\bp_{r,d})=k$ if and only if our random sequence $(a_i)$ starts with a sequence in $\cS_k$.
By Spitzer's Lemma,
the probability of this is exactly
\[
 s^{-1} \Pr\Bigl( \sum_{i\le s} a_i =k\Bigr).
\]
Indeed, given any sequence $(a_i)_{i\le s}$ with $\sum_{i\le s} a_i=k$, there is a unique `rotation'
giving a sequence that also satisfies (ii) above, and since the $a_i$ are i.i.d., a sequence and
its rotations are equiprobable.

Since $\sum_{i\le s}a_i$ has a Poisson distribution with mean $ds$, the result follows.
\end{proof}

We state a trivial consequence for later reference.
\begin{corollary}\label{decay}
Let $r\ge 2$ be fixed. There is a constant $d_0$ such that for all $d\ge d_0$ and $k\ge 0$ we have
\[
 \pi_{k,r,d}\le e^{-d(s+1)/2},
\]
where $s=1+(r-1)k$.
\end{corollary}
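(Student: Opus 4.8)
The plan is to read the exact value of $\pi_{k,r,d}$ off Lemma~\ref{pkbd} and then verify the bound by a crude elementary estimate. With $s=1+(r-1)k$ as in that lemma, $\pi_{k,r,d}=\frac{s^{k-1}d^k}{k!}e^{-ds}$. Since $ds-d(s+1)/2=d(s-1)/2=d(r-1)k/2$, the asserted inequality $\pi_{k,r,d}\le e^{-d(s+1)/2}$ is equivalent to
\[
 \frac{s^{k-1}d^k}{k!}\le e^{d(r-1)k/2} .
\]
So it suffices to establish this last inequality for all $k\ge 0$, uniformly, once $d\ge d_0$ for a suitable $d_0=d_0(r)$.

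The case $k=0$ is trivial: then $s=1$ and both sides equal $1$ (indeed $\pi_{0,r,d}=e^{-d}=e^{-d(s+1)/2}$, with equality, so the constant $1/2$ in the exponent cannot be improved). For $k\ge 1$ I would bound the left-hand side very roughly. Using $k!\ge (k/e)^k$ and $s^{k-1}\le s^k$ gives
\[
 \frac{s^{k-1}d^k}{k!}\le \frac{(sd)^k}{k!}\le \Bb{\frac{esd}{k}}^k,
\]
and since $s/k=1/k+(r-1)\le r$ for $k\ge 1$, the right-hand side is at most $(erd)^k$.

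It therefore remains to choose $d_0$ so that $(erd)^k\le e^{d(r-1)k/2}$ for all $d\ge d_0$ and all $k\ge 1$. As both sides are positive, this follows by taking $k$th powers from the single-variable inequality $erd\le e^{d(r-1)/2}$, i.e.\ $1+\log r+\log d\le d(r-1)/2$; since $r\ge 2$ the right-hand side is at least $d/2$, which exceeds $1+\log r+\log d$ once $d$ is large enough in terms of $r$, and any such threshold serves as $d_0$. There is no real obstacle here: the corollary is a routine consequence of the closed form in Lemma~\ref{pkbd}, and the only point needing (minimal) care is the uniformity in $k$, which is precisely why I reduce everything to an inequality in $d$ alone that can then be exponentiated.
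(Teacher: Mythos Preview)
Your proof is correct and follows essentially the same approach as the paper's: both use the closed form from Lemma~\ref{pkbd}, treat $k=0$ separately, and for $k\ge 1$ apply $k!\ge (k/e)^k$ together with $s/k\le r$ to reduce to the single inequality $erd\le e^{(r-1)d/2}$. The only cosmetic difference is that you divide through by $e^{-ds}$ at the outset, whereas the paper factors out $e^{-d}$ and carries the exponential along.
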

\begin{proof}
For $k\ge 1$ we have $s/k\le r$, say. Since $k!\ge (k/e)^k$ it follows that
\begin{multline*}
 \pi_{k,r,d} =
 \frac{s^{k-1}d^k}{k!} e^{-ds} = 
 \frac{e^{-d}}{sk!} \bb{sde^{-(r-1)d}}^k 
 \le  e^{-d} \left(\frac{esd}{k} e^{-(r-1)d}\right)^k \\
  \le e^{-d} (erd e^{-(r-1)d})^k 
 \le e^{-d} e^{-(r-1)dk/2} = e^{-d(s+1)/2},
\end{multline*}
choosing $d_0$ so that $d\ge d_0$ implies $erde^{-(r-1)d/2}\le 1$ for all $d\ge d_0$.
Of course, the final bound holds for $k=0$ (i.e., $s=1$), since $\pi_{0,r,d}=e^{-d}$.
\end{proof}

The key consequence of Corollary~\ref{decay} is that the values $\pi_k$ decrease rapidly to zero
starting from $\pi_0=e^{-d}$.

In the subcritical case, we have the following simple result.
\begin{lemma}\label{negmo}
For any $r\ge 2$ and $d\ge 0$ with $(r-1)d<1$ we have
\[
 \E[ |\bp_{r,d}|^{-1} ] = \sum_{k\ge 0} s^{-1}\pi_k = 1-(r-1)d/r,
\]
where, as usual, $s=1+(r-1)k$.
\end{lemma}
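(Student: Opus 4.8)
The plan is to recognize $\sum_{k\ge 0}s^{-1}\pi_k$ as $\int_0^1 g(x)x^{-1}\,\mathrm{d}x$, where $g(x)=\E[x^{|\bp_{r,d}|}]$ is the probability generating function of the tree size, and to evaluate this integral by the substitution that inverts the branching-process fixed-point equation for $g$. Since $|\bp_{r,d}|=1+(r-1)e(\bp_{r,d})$, we have $g(x)=\sum_{k\ge 0}\pi_k x^{1+(r-1)k}$, and because $(r-1)d<1$ the process is subcritical, so $|\bp_{r,d}|<\infty$ almost surely and $\sum_k\pi_k=g(1)=1$. In particular the sum of interest is absolutely convergent, and integrating term by term using $\int_0^1 x^{(r-1)k}\,\mathrm{d}x=1/(1+(r-1)k)=1/s$,
\[
 \E[|\bp_{r,d}|^{-1}]=\sum_{k\ge 0}\frac{\pi_k}{s}=\int_0^1\frac{g(x)}{x}\,\mathrm{d}x,
\]
the integrand being continuous on $(0,1]$ with $g(x)/x\to\pi_0=e^{-d}$ as $x\downto 0$.

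Next I would use the standard total-progeny recursion: the root of $\bp_{r,d}$ contributes one vertex and has $\Po(d)$ groups of $r-1$ children, each child rooting an independent copy of $\bp_{r,d}$; conditioning on the number of groups and using the Poisson generating function gives
\[
 g(x)=x\exp\bb{d\bb{g(x)^{r-1}-1}},\qquad x\in[0,1].
\]
Writing $\phi(g)=g\exp\bb{d(1-g^{r-1})}$, this says $x=\phi(g(x))$, and one checks $\phi(0)=0$, $\phi(1)=1$ and $\phi'(g)=e^{d(1-g^{r-1})}\bb{1-(r-1)d\,g^{r-1}}>0$ for $g\in[0,1]$, the positivity because $(r-1)d\,g^{r-1}\le(r-1)d<1$. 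Hence $\phi$ is a $C^1$ increasing bijection of $[0,1]$ onto itself, and since $\phi\circ g=\mathrm{id}$ on $[0,1]$ we get $g=\phi^{-1}$. Substituting $x=\phi(g)$ and noting $g/\phi(g)=e^{-d(1-g^{r-1})}$, the two exponential factors cancel and
\[
 \E[|\bp_{r,d}|^{-1}]=\int_0^1\frac{g}{\phi(g)}\,\phi'(g)\,\mathrm{d}g=\int_0^1\bb{1-(r-1)d\,g^{r-1}}\,\mathrm{d}g=1-\frac{(r-1)d}{r},
\]
which is the claim.

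I do not expect a serious obstacle: once the fixed-point equation is in hand the computation is a two-line substitution. The points needing (minor) care are the term-by-term integration, which is immediate from $\sum_k\pi_k<\infty$; the fact that $\phi$ is a genuine $C^1$ increasing bijection of $[0,1]$ onto itself, where strict subcriticality is used precisely to guarantee $\phi'(1)=1-(r-1)d>0$; and the identification $g=\phi^{-1}$. The derivation of $g(x)=xe^{d(g(x)^{r-1}-1)}$ is the only real content — it is the classical total-progeny identity for Galton--Watson trees, and could alternatively be extracted from the explicit formula for $\pi_k$ in Lemma~\ref{pkbd} by Lagrange inversion, but the branching-process argument is quickest.
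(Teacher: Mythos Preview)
Your proof is correct and is exactly the ``direct algebraic proof'' the paper alludes to but does not give. The generating-function identity $g(x)=x\exp\bb{d(g(x)^{r-1}-1)}$, the inversion $x=\phi(g)$, and the cancellation in the substituted integrand are all clean, and your use of subcriticality to ensure $\phi'>0$ on $[0,1]$ (hence that $\phi$ is a bijection and $g=\phi^{-1}$) is the right place to invoke the hypothesis $(r-1)d<1$.

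The paper takes a quite different route: it computes the expected number of components of the subcritical random hypergraph $\Hrnp$ in two ways. On one hand, since almost all components are trees, the expected number of components is $n\sum_k s^{-1}\pi_k+o(n)$; on the other hand, adding the $\sim dn/r$ edges one by one, each non-cycle edge reduces the component count by $r-1$, giving $n(1-(r-1)d/r)+o(n)$. Equating the two leading terms yields the identity. Your approach is more self-contained and avoids any appeal to the hypergraph model or to asymptotics in an auxiliary parameter $n$; the paper's approach, while more roundabout, is in keeping with its overall philosophy of translating between branching-process quantities and random-hypergraph statistics.
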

\begin{proof}
Doubtless there is a direct algebraic proof of this. We outline a different argument:
with $r$ and $d$ fixed, consider the subcritical random hypergraph $\Hrnp$, $p=d (r-1)!n^{-(r-1)}$.
It is easy to check that for each fixed $k\ge 0$, the expected number of $k$-edge
tree components is $s^{-1} \pi_k n+o(n)$. (Either directly calculate the expectation or,
using a standard coupling argument,
note that $\pi_kn$ approximates the expected number of vertices in components
that are $k$-edge trees. The factor $1/s$ arises from counting components rather than vertices.)
Since the hypergraph is subcritical, for any $K(n)$ tending
to infinity, the expected number of components with at least $K(n)$ vertices is $o(n)$, 
as is the expected number of non-tree components. Since $\sum_k s^{-1}\pi_k$ converges,
it follows that the expectation $\mu_n$ of the number of components
satisfies $\mu_n= n \sum_k s^{-1}\pi_k +o(n)$.
On the other hand, adding edges one-by-one, the expected number of edges forming cycles
is small, and when an edge does not form a cycle the number of components goes down by $r-1$.
There are $dn/r+o(n)$ edges, so $\mu_n = n(1-(r-1)d/r)+o(n)$. Combining these expressions gives the result,
noting that the final statement does not involve $n$.
\end{proof}

\section{Random hypergraph preliminaries}\label{sec_rg}

We start with a trivial lower bound on the number $L_1$ of vertices in the largest component
of the random hypergraph $\Hrnp$ defined in Section~\ref{sec_prob}.
Throughout, $r\ge 2$ is fixed. Recall 
that $p=p(n) = d \frac{(r-1)!}{n^{r-1}}$ where $d=d(n)\to\infty$ and $\log n-d\to\infty$.
Set
\begin{equation}\label{s1def}
 s_1 = s_1(n) =100\max\{ n e^{-d},\log n\},
\end{equation} 
ignoring the irrelevant rounding to integers. Note for later that
\begin{equation}\label{ds1}
  d s_1 = o(n),
\end{equation}
since $de^{-d}\to 0$ and $d<\log n$ for $n$ large enough.

\begin{lemma}\label{nocut}
Let $d=d(n)\to\infty$ with $\log n-d\to\infty$,
and define $p$ as in \eqref{pdef} and $s_1$ as in \eqref{s1def}. Then
\[
 \Pr(L_1(\Hrnp) \le n-s_1)  = o(n^{-100}).
\]
\end{lemma}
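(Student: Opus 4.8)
The plan is to show that with probability $1-o(n^{-100})$ there is no way to partition $[n]$ into a ``small'' part $S$ with $1\le|S|\le s_1$ and a ``large'' part $[n]\setminus S$ so that no edge of $\Hrnp$ meets both parts; equivalently, no component has at most $s_1$ vertices unless the whole hypergraph has a component on more than $n-s_1$ vertices. Actually it is cleaner to bound directly: if $L_1\le n-s_1$, then in particular there is a set $S$ of size exactly $s=\lceil s_1\rceil$ such that no edge of $\Hrnp$ has vertices in both $S$ and $[n]\setminus S$ --- namely, take $S$ to be a union of components whose total size lies in $[s_1, \text{something}]$; more carefully, if the largest component has $\le n-s_1$ vertices we can greedily collect components (each of size $\le n - s_1 \le n$) into a block $S$ until $|S|$ first exceeds, say, $s_1$, and since each component added has at most $n-s_1$ vertices we can arrange $s_1 \le |S| \le 2s_1 =: t$ (for $n$ large, using $s_1 = o(n)$ from \eqref{ds1}); this $S$ is a union of components and hence has no edge crossing to its complement. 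So it suffices to bound
\[
 \Pr\bb{\exists\, S\subseteq[n],\ s_1\le |S|\le t,\ \text{no edge of }\Hrnp\text{ crosses }(S,[n]\setminus S)}.
\]

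The next step is a union bound over the choice of $S$. Fix $S$ with $|S|=k$ where $s_1\le k\le t$. The number of potential edges crossing the cut, i.e.\ edges with at least one vertex in $S$ and at least one in $[n]\setminus S$, is $\binom{n}{r}-\binom{k}{r}-\binom{n-k}{r}$. Each is present independently with probability $p$, so the probability that none is present is $(1-p)^{\binom{n}{r}-\binom{k}{r}-\binom{n-k}{r}}\le \exp\!\bb{-p\bb{\binom{n}{r}-\binom{k}{r}-\binom{n-k}{r}}}$. The key estimate is a lower bound on the number of crossing edges: since $1\le k\le t=o(n)$, every vertex $v\in S$ lies in at least $\binom{n-k}{r-1}$ edges meeting $[n]\setminus S$ in their remaining $r-1$ slots, and these are distinct for distinct $v$, so the number of crossing edges is at least $k\binom{n-k}{r-1}\ge k\cdot\tfrac12\binom{n}{r-1}$ for $n$ large (as $k\le t=o(n)$). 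Hence $p$ times the number of crossing edges is at least
\[
 \tfrac12\, k\, p\binom{n}{r-1} = \tfrac12\, k\, d\,\frac{(r-1)!}{n^{r-1}}\binom{n}{r-1} \ge \tfrac13\, k\, d
\]
for $n$ large, using $p=d(r-1)!/n^{r-1}$ and $\binom{n}{r-1}\sim n^{r-1}/(r-1)!$. Therefore
\[
 \Pr\bb{\text{no crossing edge}}\le e^{-kd/3}.
\]

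Finally, summing over $S$: there are $\binom{n}{k}\le n^k$ sets of size $k$, so the total probability is at most
\[
 \sum_{k=s_1}^{t} n^k e^{-kd/3}
 = \sum_{k=s_1}^{t} \bb{n e^{-d/3}}^k
 \le \sum_{k\ge s_1} \bb{n e^{-d/3}}^k.
\]
Now we use the hypotheses. Since $\log n - d\to\infty$ we do \emph{not} immediately get $ne^{-d/3}<1$; instead observe $n e^{-d/3} = (ne^{-d})^{1/3} n^{2/3}$, and using $s_1\ge 100\log n$ together with $s_1\ge 100 n e^{-d}$ we get, writing $q:=ne^{-d/3}$,
\[
 q^{s_1}\le \exp\bb{s_1\log q}\le \exp\bb{s_1(\tfrac23\log n)} \cdot \exp\bb{s_1\cdot\tfrac13\log(ne^{-d})},
\]
and here the point is that $\log(ne^{-d})=\log n-d$ could be large and positive, which is bad. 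So instead I would simply use $k\,d/3 \ge k\log n$ directly for the relevant $k$ if that held, which it does not. The clean fix: note that for $k\ge s_1\ge 100\log n$, we have $n^k = e^{k\log n}$ and we want $e^{k\log n}e^{-kd/3}$ summable and $o(n^{-100})$; split according to whether $d\ge 6\log n$ or not. If $d\ge 6\log n$ then $kd/3 \ge 2k\log n$ so $n^k e^{-kd/3}\le n^{-k}\le n^{-s_1}\le n^{-100\log n}=o(n^{-100})$ and the geometric sum is controlled. If $d< 6\log n$, then from $s_1\ge 100ne^{-d}$ we get $e^{-d}\le s_1/(100n)$, but more usefully we instead bound the number of crossing edges better: replace the crude $\binom{n-k}{r-1}\ge\tfrac12\binom{n}{r-1}$ by noting each vertex of $S$ also has $\ge\binom{n}{r-1}/2$ crossing edges so actually we can afford to use $p$ times that many is $\ge \tfrac13 kd$, and with $d<6\log n$ we instead bound $\binom{n}{k}$ more carefully by $\binom{n}{k}\le (en/k)^k\le (en/s_1)^k$; since $s_1\ge 100ne^{-d}$ gives $n/s_1\le e^d/100$, we obtain $\binom{n}{k}\le (e^{d+1}/100)^k$, hence
\[
 \binom{n}{k} e^{-kd/3} \le \bb{\tfrac{e}{100}\, e^{d}e^{-d/3}}^k = \bb{\tfrac{e}{100}\, e^{2d/3}}^k.
\]
That is still not $<1$. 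The honest resolution --- and the step I expect to be the main obstacle --- is that one must use a \emph{better} lower bound on the number of crossing edges when $k$ is small but $d$ is large: in fact for $k\le t=o(n)$ the number of crossing edges is $(1-o(1))k\binom{n}{r-1}$, giving $p$ times it equal to $(1-o(1))kd$, so $\Pr(\text{no crossing edge})\le e^{-(1-o(1))kd}\le e^{-kd/2}$ for $n$ large. Then $\binom{n}{k}e^{-kd/2}\le (en/k)^k e^{-kd/2}\le (en e^{-d/2}/s_1)^k$; and now $ne^{-d/2}/s_1 \le ne^{-d/2}/(100ne^{-d}) = e^{d/2}/100$ --- still growing. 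So the genuinely needed input is $s_1\ge 100\log n$ \emph{and} handling via: for each $k$ in range, $\binom{n}{k}e^{-kd/2} \le n^k e^{-kd/2}$, and we want this $\le e^{-2\log n \cdot k}$, i.e.\ $d/2\ge 3\log n$, i.e.\ $d\ge 6\log n$ --- which again need not hold. I think the correct and intended argument is therefore to bound the crossing-edge count from below by the edges \emph{entirely avoiding $S$ in $r-1$ coordinates but hitting $S$ in one}, giving $\ge k\binom{n-k}{r-1}$, combined with the observation that the \emph{product} $p\binom{n-k}{r-1}$ is within $(1+o(1))$ of $d$, so $\Pr(\text{no crossing edge})\le \exp(-(1-o(1))kd)$, and then split the sum at $k=s_1$: since $s_1\ge 100\log n$ and (crucially) we only need the bound $o(n^{-100})$, write $\sum_{k\ge s_1} n^k e^{-kd} \le \sum_{k\ge s_1}(ne^{-d})^k$, and now $ne^{-d}=s_1/100 \cdot (\text{something})$...

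Let me state the clean version I would actually write: since $s_1\ge 100ne^{-d}$, we have $ne^{-d}\le s_1/100\le t/100$, and in the regime where this is $\ge 1$ the dominant contribution to $\sum_{k\ge s_1}(ne^{-d})^k$ is geometric with ratio $ne^{-d}$, which could exceed $1$ --- so this naive route fails exactly when $\mu_0=ne^{-d}$ is large. Hence the only workable approach is the dichotomy on whether $\mu_0 \le \log n$ (then $s_1=100\log n$, and $ne^{-d}\le\log n$, so $\sum_{k\ge 100\log n}(ne^{-d})^k \le \sum_{k\ge 100\log n}(\log n\cdot e^{-d/2})^k$ using the extra $e^{-kd/2}$ slack from $e^{-(1-o(1))kd}$ versus $n^k\le e^{k\log n}\le e^{kd/?}$...). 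I will commit in the writeup to: \textbf{(i)} reduce to a union bound over small unions-of-components $S$, $s_1\le|S|\le 2s_1$; \textbf{(ii)} bound $\Pr(\text{no edge leaves }S)\le \exp(-(1-o(1))|S|\,d)$ using $p\binom{n-|S|}{r-1}=(1-o(1))d$ (valid as $|S|=o(n)$ by \eqref{ds1}); \textbf{(iii)} bound $\binom{n}{|S|}\le (en/|S|)^{|S|}\le (en/s_1)^{|S|}$ and, since $s_1\ge 100 ne^{-d}$, get $en/s_1\le e^{d+1}/100$, so each term is $\le (e^{d+1}e^{-(1-o(1))d}/100)^{|S|}=(e^{o(d)+1}/100)^{|S|}$, which is $\le e^{-|S|}$ once... no. The actual exponent from (ii) with the true constant is $\ge |S|(1-o(1))d$, and from $\binom n{|S|}$ is $\le |S|(d+1-\log s_1 +\log n - \log|S| )\le |S|(d + O(1) - \log 100)$... these $d$'s cancel, leaving $\le |S|(O(1)-\log 100)<0$, so each term is $\le e^{-c|S|}\le e^{-cs_1}$; with $s_1\ge 100\log n$ this is $\le n^{-100c}=o(n^{-100})$ after adjusting the constant $100$ to something larger, or noting the geometric sum only loses a constant factor. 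I expect the bookkeeping of these cancelling logarithms (confirming the exponent is genuinely negative and linear in $|S|$, uniformly in the two regimes for $s_1$) to be the one delicate point; everything else is a routine union bound.
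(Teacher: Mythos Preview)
Your overall approach---union bound over vertex cuts $S$ with no edge crossing to $S^\cc$, bounding $\binom{n}{|S|}(1-p)^{|S|\binom{n-|S|}{r-1}}$---is exactly the paper's. But there is a genuine gap in your reduction step, and a secondary imprecision in the final estimate.

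\textbf{The reduction to $s_1\le|S|\le 2s_1$ is wrong.} You claim that if $L_1\le n-s_1$ then a greedy collection of components yields $|S|\in[s_1,2s_1]$. But the increment at the step where $|S|$ first exceeds $s_1$ can be as large as $L_1\le n-s_1$, not $s_1$; so you only get $|S|\le n$. Concretely, if $\Hrnp$ had three components each of size $n/3$ (so $L_1=n/3\le n-s_1$), every nontrivial union of components has size $n/3$ or $2n/3$, neither in $[s_1,2s_1]$. The correct reduction (which the paper uses) gives a cut with $s_1\le|A|\le n/2$: if $L_1\ge s_1$ take $A$ to be whichever of $C_1$, $C_1^\cc$ has size $\le n/2$; if $L_1<s_1$ all components are smaller than $s_1\le n/4$ and your greedy argument does give $|S|\le 2s_1\le n/2$.

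\textbf{You then need to handle the full range $s_1\le a\le n/2$.} For $a$ as large as $n/2$ the crossing-edge count is no longer $(1-o(1))ad$; one only has $a\binom{n-a}{r-1}\ge 2^{-r}a\,n^{r-1}/(r-1)!$, giving exponent $\ge ad/2^r$. The paper therefore splits at $a=cn/d$: for $a\le cn/d$ one has $\binom{n-a}{r-1}\ge \frac{n^{r-1}}{(r-1)!}(1-1/d)$ and hence exponent $\ge a(d-1)$, giving $\nu_a\le(en/a)^a e^{-a(d-1)}=(e^2ne^{-d}/a)^a\le(e^2/100)^a$ since $a\ge s_1\ge 100ne^{-d}$; for $cn/d\le a\le n/2$ the cruder exponent $ad/2^r$ beats $(en/a)^a\le(ed/c)^a$ once $d$ is large.

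\textbf{On your final bookkeeping.} Even in the small-$a$ range your ``$(1-o(1))d$'' is too coarse: it allows $e^{o(d)}$ in the base, which need not be beaten by $1/100$. The point is that for $a\le cn/d$ the correction is $O(a/n)=O(1/d)$, so the exponent is $a(d-O(1))$, not merely $a(1-o(1))d$; then $(en/a)\,e^{-d+O(1)}\le e^{O(1)}ne^{-d}/a\le e^{O(1)}/100<e^{-2}$, which is what makes the sum $o(n^{-100})$ via $a\ge 100\log n$. Your instinct in the last paragraph that ``these $d$'s cancel'' is right, but you must track that the leftover is $O(1)$, not $o(d)$.
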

\begin{proof}
Noting that $s_1\le n/4$, say, for $n$ large enough,
it is easy to see that if $L_1\le n-s_1$ then there is a vertex
cut $[n]=A\cup A^\cc$ with $s_1\le |A|\le n/2$ such that no hyperedge meets both $A$ and $A^\cc$.
For a given value of $a=|A|$, considering only potential hyperedges with one vertex in $A$ and the others
in $A^\cc$, the expected number of such cuts is crudely at most
\[
 \nu_a = \binom{n}{a} (1-p)^{a \binom{n-a}{r-1}} \le \left(\frac{en}{a}\right)^a \exp\left(-p a \binom{n-a}{r-1}\right).
\]
Now $\binom{n-a}{r-1}=\frac{n^{r-1}}{(r-1)!}(1+O(a/n))$.
Hence there is a constant $c>0$ such that for $a\le cn/d$ we
have $\binom{n-a}{r-1} \ge \frac{n^{r-1}}{(r-1)!}(1-1/d)$, say, and thus
\[
 \nu_a \le \left(\frac{en}{a}\right)^a\exp\left(-a\frac{pn^{r-1}}{(r-1)!}(1-1/d)\right)
 = \left(\frac{e^2ne^{-d}}{a}\right)^a \le e^{-2a}\le n^{-200},
\]
using $a\ge 100ne^{-d}$ and $a\ge 100\log n$ in the last two steps.
On the other hand, since $a\le n/2$, for $a\ge cn/d$
we still have $\binom{n-a}{r-1}\ge 2^{-r}\frac{n^{r-1}}{(r-1)!}$
and so
\[
 \nu_a \le \left(\frac{ en}{a} e^{-d/2^r}\right)^a \le \left(\frac{ed}{c} e^{-d/2^r}\right)^a
\le e^{-2a}
\]
if $n$ is large enough, recalling that $d=d(n)\to\infty$.
It follows easily that $\sum_{s_1\le a\le n/2}\nu_a= o(n^{-100})$, so the probability that there
is a cut as described is $o(n^{-100})$.
\end{proof}

We have shown that, up to a negligible error probability, the total size of all components with
at most $n/2$ vertices is at most $s_1$.
In particular, there are no individual components with more than $s_1$ vertices other than the unique
giant component.
We shall now show that with high probability the giant component is the only
component containing cycles. Furthermore, it is extremely unlikely that there is a 
cycle in a component of size between around $1000(\log n)/d$, say, and $n/2$.

Set
\begin{equation}
  s_0=\left\lceil\frac{1000 \log n}{d}\right\rceil \ge 1000.
\end{equation}
Recall that we assume that $\log n-d\to\infty$.
Since $de^{-d}$ is a decreasing function of $d$ for $d\ge 1$, it follows that
\begin{equation}\label{ndded}
 \frac{ n d e^{-d}}{\log n}\to \infty.
\end{equation}
Hence $s_0=o(s_1)$ and in particular, for $n$ large enough, $s_0<s_1$.

\begin{lemma}\label{nocyc}
Under the assumptions above, the probability that $\Hrnp$ contains a non-tree component
with at most $s_0$ vertices is $o(1)$.
Furthermore, if $X$ is the total number of edges in such components, then $\E[X^2]=o(1)$.
Finally, the probability that there is a non-tree component with more than $s_0$ but
fewer than $n/2$ vertices is $o(n^{-99})$.
\end{lemma}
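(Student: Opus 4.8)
The plan is to prove all three assertions by first-moment (union-bound) estimates on the number of non-tree components of a prescribed size, the only real input being that a connected $r$-graph on $j$ vertices with $m$ edges has \emph{positive nullity} (i.e.\ is not an $r$-tree) exactly when $(r-1)m\ge j$. Fix $j$ and let $\mu_j$ be the expected number of non-tree components of $\Hrnp$ with exactly $j$ vertices. Writing $b(j)=\binom nr-\binom{n-j}r-\binom jr$ for the number of potential edges meeting a fixed $j$-set but not contained in it, and bounding the number of connected non-tree $r$-graphs on $[j]$ with $m$ edges crudely by $\binom{\binom jr}m$, one obtains
\[
 \mu_j=\binom nj\sum_{H\ \text{conn.\ non-tree on }[j]} p^{e(H)}(1-p)^{\binom jr-e(H)}(1-p)^{b(j)}
 \ \le\ \binom nj (1-p)^{b(j)}\,\Pr\!\bigl(\Bi(\tbinom jr,p)\ge m_0\bigr),
\]
where $m_0=m_0(j)=\lceil j/(r-1)\rceil$, so that $(r-1)m_0\ge j$. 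For the boundary factor one checks $b(j)\ge\tfrac12 j\binom{n-j}{r-1}$, hence $(1-p)^{b(j)}\le e^{-pb(j)}\le e^{-2^{-r-1}dj}$ for all $j\le n/2$, while for $j\le\delta n$ with $\delta$ a small enough fixed constant $b(j)=(1-o(1))jn^{r-1}/(r-1)!$, so $(1-p)^{b(j)}\le e^{-(1-o(1))dj}$ uniformly. Bounding $\Pr(\Bi(\binom jr,p)\ge m_0)$ by $\frac1{m_0!}(\binom jr p)^{m_0}$ and using $\binom nj\le n^j/j!$ together with the crucial cancellation $n^j p^{m_0}\le (d(r-1)!)^{m_0}n^{\,j-(r-1)m_0}\le (d(r-1)!)^{m_0}$, a Stirling estimate then gives, for $j\le\delta n$,
\[
 \mu_j\ \le\ O(dj^{r-1})\,\beta^j,\qquad \beta=\beta(n)=c_r\,d^{1/(r-1)}e^{-(1-o(1))d}\ \longrightarrow\ 0 ,
\]
whereas for $\delta n<j<n/2$ the combinatorial factor is useless but $b(j)=\Omega(n^r)$, so $\mu_j\le\binom nj(1-p)^{b(j)}\le 2^n e^{-\Omega(dn)}\le e^{-n}$ for $n$ large.

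For the first two assertions I would sum over $j\le s_0$. Since $\beta\to0$ the series $\sum_{j\le s_0}O(dj^{r-1})\beta^j$ is dominated by its first term, so $\E[\#\{\text{non-tree components with}\le s_0\text{ vertices}\}]=o(1)$, giving the first assertion by Markov. Inserting an extra factor $e(H)\le j^r$ (resp.\ $e(H)^2\le j^{2r}$) in the same sum yields $\E[X]=o(1)$ and $\E[\sum_C e(C)^2\ind{\cdots}]=o(1)$. To pass to $\E[X^2]=o(1)$ I expand $X^2$ over ordered pairs of components: the diagonal contributes exactly $\E[\sum_C e(C)^2\ind{\cdots}]=o(1)$, and off-diagonal terms involve vertex-disjoint pairs $C,C'$ with $|C|,|C'|\le s_0$. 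For these the events ``$C$ is such a component'' and ``$C'$ is such a component'' share only the requirement that the $\le|C||C'|\binom n{r-2}$ potential edges between $C$ and $C'$ be absent, and $p|C||C'|n^{r-2}=O((\log n)^2/(dn))=o(1)$, so these events are asymptotically independent; hence the off-diagonal sum is $\le(1+o(1))\E[X]^2=o(1)$ and $\E[X^2]=o(1)$.

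For the third assertion I would split the range. For $s_0<j\le\delta n$ the bound $\mu_j\le O(dj^{r-1})\beta^j$ applies; the sum over this range is geometric, dominated by $j=s_0+1$, and is $\le O(ds_0^{r-1})\beta^{s_0}$. Since $\beta^{s_0}\le c_r^{s_0}e^{-(1-o(1))ds_0}$ with $c_r^{s_0}=e^{O(\log n/d)}=n^{o(1)}$ (here $d\to\infty$ is exactly what makes $\log n/d=o(\log n)$) and $ds_0\ge1000\log n$, this is $n^{o(1)}\cdot n^{-(1-o(1))\cdot1000}=o(n^{-99})$. For $\delta n<j<n/2$ we just sum $\mu_j\le e^{-n}$ over $j$ to get $\le n e^{-n}=o(n^{-99})$. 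Adding the two ranges gives the third assertion. (If one prefers, Lemma~\ref{nocut} can be used first to discard the range $s_1<j<n/2$, but this is not needed.)

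The one genuinely important point is the cancellation of powers of $n$: a union bound over the $\binom nj\sim n^j/j!$ choices of vertex set is hopeless on its own — indeed $\Hrnp$ has about $ne^{-d}$ isolated vertices — but the non-tree condition forces at least $j/(r-1)$ edges, each carrying a factor $p=\Theta(n^{-(r-1)})$, and $(r-1)\lceil j/(r-1)\rceil\ge j$ makes these exactly offset the vertex-set count, after which everything decays geometrically in $j$ with ratio $o(1)$; the rest is bookkeeping (Stirling, and the binomial tail bound). The only place that needs a little care is keeping the boundary penalty $(1-p)^{b(j)}$ of the order $e^{-dj}$ rather than merely $e^{-\Omega(dj)}$ for $j$ up to a small constant times $n$, since this is what makes the estimate for the third assertion polynomially small enough to beat $n^{-99}$.
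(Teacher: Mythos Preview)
Your argument is correct and follows essentially the same first-moment strategy as the paper: bound the expected number of vertex sets that are isolated and span at least $v/(r-1)$ edges, using the cancellation $n^j\cdot p^{\lceil j/(r-1)\rceil}\le (\text{const}\cdot d)^{\lceil j/(r-1)\rceil}$ to kill the entropy of the vertex set. The bookkeeping differs only in detail.

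Two points are worth noting. First, for $\E[X^2]$ the paper avoids your correlation argument entirely by the following trick: if $e,f$ are edges in (possibly equal) small non-tree components with vertex sets $S_1,S_2$, then $S=S_1\cup S_2$ is itself an isolated set spanning at least $|S|/(r-1)$ edges, so $X^2\le\sum_{v\le 2s_0} v^{2r} C_v$ directly. This makes the diagonal/off-diagonal split unnecessary. Your near-independence argument is also fine, but the union trick is shorter.

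Second, your claim that $b(j)=(1-o(1))jn^{r-1}/(r-1)!$ \emph{uniformly} for $j\le\delta n$ with $\delta$ fixed is not quite right: for $j$ of order $\delta n$ the factor is only $1-O(\delta)$, not $1-o(1)$. This does not damage the proof, since choosing $\delta$ small enough still gives $\beta^{s_0}\le n^{-(1-O(\delta))1000+o(1)}=o(n^{-99})$, but you should state it as $(1-c\delta)$ for a constant $c=c(r)$, so that $\beta$ is genuinely independent of $j$ and the geometric-sum step is clean. The paper sidesteps this by running its bound only for $v\le 2s_1=o(n)$ (where $1-o(1)$ is valid) and invoking Lemma~\ref{nocut} for larger $v$, which you mention as an alternative.
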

\begin{proof}
We start with the second statement, which implies the first.
We aim for simplicity rather than a strong bound. If a non-tree component has $k$ edges and $v$
vertices, then $v\le (r-1)k$. Hence $k\ge v/(r-1)$.
Let $C_v$ be the number of $v$-element subsets $S$ of $[n]$ with the following properties:
no edge of $\Hrnp$ meets both $S$ and $S^\cc$, and $S$ spans at least $v/(r-1)$ edges of
$\Hrnp$. The vertex set of any non-tree component is such a set for some $v\ge r$.
Hence the number of non-tree components of $\Hrnp$ with at most $s_0$ vertices is at most
\[
 \sum_{v=r}^{s_0} C_v.
\]
Suppose that $e$ and $f$ are (not necessarily distinct) edges of $\Hrnp$ both in non-tree
components with at most $s_0$ vertices,
with vertex sets $S_1$ and $S_2$, say. Then $S_i$ spans at least $|S_i|/(r-1)$ edges
so (since the $S_i$ are equal or disjoint), $S=S_1\cup S_2$ has the properties above. Since a
set of $v$ vertices spans (very crudely) at most $v^r$ edges, we thus have
\[
 X^2 \le \sum_{v=r}^{2s_0} v^{2r} C_v.
\]

For $r\le v\le 2s_1$
let $k=\ceil{v/(r-1)}=\Theta(v)$. Very crudely,
\[
 \E[C_v] \le \binom{n}{v} \binom{\binom{v}{r}}{k} p^k (1-p)^{v\binom{n-v}{r-1}},
\]
where the last factor accounts for the fact that there are no edges consisting of one vertex
in the set $S$ and $r-1$ vertices outside. 
Using the very crude bound $\binom{v}{r}\le v^r$, and the slightly more careful bound
$\binom{n-v}{r-1}=\frac{n^{r-1}}{(r-1)!}(1+O(v/n))$ together with the inequality
$1-p\le e^{-p}$, it follows that
\begin{align*}
 \E[C_v] &\le \left(\frac{en}{v}\right)^v \left(\frac{e v^r}{k}\right)^k p^k
  \exp\left(-\frac{pvn^{r-1}}{(r-1)!} (1+O(v/n))\right) \\
  & = \frac{e^{v+k} n^v v^{rk}}{v^v k^k} d^k n^{-(r-1)k} (r-1)!^k \exp\bb{ -dv +O(dv^2/n) }.
\end{align*}
Since $r$ is constant and $v=\Theta(k)$, for some constant $B$ depending only on $r$ we thus have
\[
 \E[C_v] \le B^k \frac{n^{v-(r-1)k}}{k^{v-(r-1)k}} d^k \exp(-dv+O(dv^2/n)).
\]
Recall (from \eqref{ds1}) that $v\le 2s_1=o(n/d)=o(n)$. For $n$ large enough, in the range
$r\le v\le 2s_1$ we thus have
\begin{equation}\label{ECv}
 \E[C_v] \le B^k \frac{n^{v-(r-1)k}}{k^{v-(r-1)k}} d^k \exp(-dv/2).
\end{equation}
Since $(n/k)^{r-1}d^{-1}\ge (n/k)d^{-1} \ge n/(s_1d) \to\infty$,
if $v$ is not a multiple of $r-1$, then replacing $k=\ceil{v/(r-1)}$ by $k'=v/(r-1)$ in the right-hand
side of \eqref{ECv} can only increase it, so
\[
 \E[C_v] \le B^{v/(r-1)}d^{v/(r-1)} \exp(-dv/2) =((Bd)^{1/(r-1)} e^{-d/2})^v \le e^{-dv/4}
\]
if $n$ is large enough. Then
\[
 \sum_{v=r}^{2s_0} v^{2r} \E[C_v]
 \le \sum_{v=r}^\infty v^{2r} e^{-dv/4} \sim r^{2r} e^{-dr/4} \to 0,
\]
and the bound on $\E[X^2]$ follows.

For the final statement, simply note that
\[
 \sum_{v=s_0}^{s_1} \E[C_v] \le \sum_{v=s_0}^\infty  e^{-dv/4} \sim e^{-ds_0/4} =o(n^{-100})
\]
by choice of $s_0$, and apply Lemma~\ref{nocut} to deal with sizes between $s_1$ and $n/2$.
\end{proof}

\subsection{Small tree components}

Let $T_k=T_k(\Hrnp)$ denote the number of components of $\Hrnp$ that are trees with $k$ edges,
and so $s=1+(r-1)k$ vertices. Then $sT_k$ is the number of vertices in such components.
We already know that with very high probability there are no components with more than $s_1$
vertices other than the giant component; we shall see that with very high probability there 
are no tree components with more than $s_0$ vertices.
Recalling our convention of writing $s=1+(r-1)k$, set
\[
 k_0 = \frac{s_0-1}{r-1},
\]
ignoring the rounding to integers. Define $\pi_k$ as in Section~\ref{ss_pp}.
\begin{lemma}\label{lEtk}
For $0\le k=k(n)\le k_0$ we have
\[
 \E[T_k]  = n\frac{\pi_k}{s}(1+O(d^2s^2/n)),
\]
while the expected number of tree components with between $s_0$ and $n/2$
vertices is $o(n^{-99})$.
\end{lemma}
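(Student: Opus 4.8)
The plan is to compute $\E[T_k]$ by summing, over all possible vertex sets $S$ of size $s=1+(r-1)k$ and all $r$-trees $T$ on $S$ with $k$ edges, the probability that $S$ spans exactly the edges of $T$ in $\Hrnp$ and that no edge of $\Hrnp$ meets both $S$ and $S^\cc$. First I would fix $s$ and count the number of labelled $r$-trees on a given $s$-set: this is a standard quantity (the hypergraph generalization of Cayley's formula), and in fact it is most convenient to extract it from Lemma~\ref{pkbd}, which tells us that $\pi_k = \Pr(e(\bp_{r,d})=k) = s^{k-1}d^k e^{-ds}/k!$. Writing $a_k$ for the number of rooted $r$-trees with $k$ edges (equivalently, $s a_k/s = a_k$ unrooted ones times $s$ choices of root), the branching process computation gives $a_k = s^{k-1} \cdot \frac{k!\,(r-1)!^k}{k!}$-type formula; more cleanly, since each edge of $\bp_{r,d}$ contributes a factor $d$ and the probability of the tree shape with the Poisson weights factors as $d^k e^{-ds}$ times a combinatorial count, comparing with Lemma~\ref{pkbd} yields that the number of $r$-trees on a labelled $s$-set is $s^{k-1} k! \, (r-1)!^k / (k!\, \text{(symmetry)})$ — rather than chase constants, I would simply define things so that the number of rooted $r$-trees times $p^k$ times $e^{-(\text{edges inside }S)\cdot(\text{stuff})}$ reproduces $\pi_k$.

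Concretely, the cleanest route: for a fixed $s$-set $S$, the probability that the component of $\Hrnp$ spanned by (the vertices of) $S$ is a tree isomorphic to a \emph{fixed} $r$-tree $T$ on $S$ equals $p^k (1-p)^{\binom{s}{r}-k}\,(1-p)^{s\binom{n-s}{r-1}(1+O(s/n))}$, where the middle factor forbids the other $\binom{s}{r}-k$ internal edges and the last forbids all edges leaving $S$ (each such edge has between $1$ and $r-1$ vertices in $S$; the dominant count is the $s\binom{n-s}{r-1}$ edges with exactly one vertex in $S$, and the edges with $2,\dots,r-1$ vertices in $S$ contribute only a factor $1+O(s^2/n)$ to the exponent after taking logs, since $p = \Theta(n^{-(r-1)})$). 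Multiplying by the number $t_s$ of $r$-trees on $S$ and by $\binom{n}{s}$, and using $\binom{n}{s} t_s p^k = \frac{n^s}{s!}\,t_s\,d^k (r-1)!^k n^{-(r-1)k}(1+O(s^2/n))$ together with $s+(r-1)k$... — comparing the resulting main term with $\pi_k = s^{k-1}d^k e^{-ds}/k!$ and the identity $e^{-ds} = (1-p)^{s\binom{n-s}{r-1}}(1+o(1))$-up-to-the-needed-precision, one reads off $\E[T_k] = n\pi_k s^{-1}(1+O(d^2 s^2/n))$. The two error sources are: (a) replacing $\binom{n-s}{r-1}$ by $n^{r-1}/(r-1)!$ costs a factor $1+O(s/n)$ in the exponent $d s$, hence $1+O(ds^2/n)$ multiplicatively — wait, $ds\cdot O(s/n)=O(ds^2/n)$, and one must be slightly more careful to get $O(d^2 s^2/n)$, which comes from also accounting for $(1-p)$ vs $e^{-p-p^2/2}$ over $\Theta(ds/p)$... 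I would track these and confirm the stated $O(d^2 s^2/n)$ bound; (b) forbidding internal edges and edges with $2,\dots,r-1$ vertices in $S$, which for $r\ge 3$ contributes $O(s^2 p + s^3 p + \cdots) = O(s^2 n^{-(r-1)}\cdot\text{poly}) = O(d s^2/n)$ at worst. Since $s\le s_0 = O((\log n)/d)$, we have $d^2 s^2/n = O((\log n)^2/n) = o(1)$, so the error is genuinely small in the stated range.

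For the second statement, I would bound the expected number of tree components with more than $s_0$ vertices by $\sum_{s>s_0} \binom{n}{s} t_s p^k (1-p)^{s\binom{n-s}{r-1}}$ and observe this is dominated by essentially the same quantity as $\sum_{v>s_0}\E[C_v]$ in the proof of Lemma~\ref{nocyc} (indeed a tree component is a special non-tree... — rather, the bound $\E[C_v]\le e^{-dv/4}$ there was proved for sets spanning \emph{at least} $v/(r-1)$ edges, and a $v$-vertex tree component spans exactly $(v-1)/(r-1)<v/(r-1)$ edges, so I would redo the one-line estimate with $k=(v-1)/(r-1)$ in place of $\lceil v/(r-1)\rceil$, which only improves the bound). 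Summing the geometric-type tail from $s_0$ gives $O(e^{-ds_0/4}) = o(n^{-99})$ by the choice $s_0 = \lceil 1000(\log n)/d\rceil$, and sizes between $s_1$ and $n/2$ are handled by Lemma~\ref{nocut}, as in the proof of Lemma~\ref{nocyc}.

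The main obstacle is purely bookkeeping: pinning down the multiplicative error to be exactly $O(d^2 s^2/n)$ rather than something slightly worse, which requires carefully expanding $\log(1-p)$ to second order in the exponent $s\binom{n-s}{r-1}$ (a sum of $\Theta(ds/p)$ terms each contributing $-p-p^2/2-\cdots$, so the $p^2$ correction is $\Theta(ds\cdot p) = \Theta(d^2 s/n^{r-1})$ which is tiny, while the $\binom{n-s}{r-1}$ vs $n^{r-1}/(r-1)!$ correction is the genuine $O(d^2s^2/n)$-sized one once one includes both the $O(s/n)$ in the exponent and the $O(ds/n)$ from the internal/mixed edges) — together with the clean identification of $t_s$, the number of labelled $r$-trees, via Lemma~\ref{pkbd}. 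No conceptually new idea is needed beyond what is already used for Lemma~\ref{nocyc}.
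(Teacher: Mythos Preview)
Your approach is the paper's: a first-moment computation compared against $\pi_k$, then an exponential tail bound combined with Lemma~\ref{nocut}. Three places where the paper is cleaner than your sketch.

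\emph{Tree count.} The paper simply cites Selivanov's formula $n_k = s^{k-1}(s-1)!/\bigl(k!\,(r-1)!^k\bigr)$ for the number of $k$-edge $r$-trees on a given $s$-set, and writes $\E[T_k]=\binom{n}{s}n_k\,p^k(1-p)^N$ with $N=\binom{n}{r}-\binom{n-s}{r}\pm k$. Extracting $n_k$ from Lemma~\ref{pkbd} is legitimate in principle (the formula for $\pi_k$ encodes it), but ``define things so that it reproduces $\pi_k$'' is circular as written: $\pi_k$ is the target of the comparison, not an input you may tune to. Either carry out the extraction explicitly or cite the formula.

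\emph{Error bound.} Rather than separating internal edges, edges with exactly one vertex in $S$, and the rest, the paper handles all edges meeting $S$ at once via $\binom{n}{r}-\binom{n-s}{r} = \frac{sn^{r-1}}{(r-1)!}(1+O(s/n))$, and combines with $-\log(1-p)=p(1+O(d/n))$ to get exponent $-ds(1+O((d+s)/n))$; then $ds\cdot(d+s)/n\le d^2s^2/n$ since $d,s\ge 1$. This is where the $d^2s^2/n$ comes from, not from the $p^2$ term in $\log(1-p)$ as you speculate (that contribution is $O(dsp)=O(d^2s/n^{r-1})$, which is negligible).

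\emph{Tail.} The paper observes that the derivation of $\E[T_k]=n(\pi_k/s)\exp(O(d^2s^2/n))$ used only $s\le s_1$, so it still applies for $s_0<s\le s_1$; then Corollary~\ref{decay} gives $\pi_k\le e^{-d(s+1)/2}$, and $d^2s^2/n\le ds\cdot(ds_1/n)=o(ds)$ yields $\E[T_k]\le n e^{-ds/4}$, which sums to $o(n^{-99})$. Your plan to redo the Lemma~\ref{nocyc} estimate with $k=(v-1)/(r-1)$ also works, but note that then $v-(r-1)k=1$ rather than $0$, so the bound picks up an extra factor of $n/k$; that is harmless here, but it is not ``only an improvement'' as you claim.
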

\begin{proof}
Let $k\ge 0$ and set $s=1+(r-1)k$.
We assume throughout the rather weak bound $s\le s_1$; Lemma~\ref{nocut}
implies that the expected number of tree components with between $s_1$ and $n/2$
vertices is $o(n^{-99})$.
By a result of Selivanov~\cite{Selivanov} (see also~\cite{KL_sparse,smoothing}), there are exactly 
\[
 n_k =  s^{k-1} \frac{(s-1)!}{k!\, (r-1)!^k}
\]
$k$-edge $r$-trees on a given set of $s$ vertices.
Clearly,
\[ 
 \E[T_k]  = \binom{n}{s} n_k p^k (1-p)^{\binom{n}{r}-\binom{n-s}{r}+k}.
\]
Since $\binom{n}{s} = \frac{n^s}{s!} \exp(O(s^2/n))$, we have
\[
 \binom{n}{s} n_k = \frac{n^s s^{k-2}}{k!(r-1)!^k} \exp(O(s^2/n)).
\]
Recalling that $p=d(r-1)!n^{-(r-1)}$, it follows that
\begin{equation}\label{ETk1}
 \E[T_k]  = \frac{n^{s-(r-1)k} s^{k-2}}{k!} d^k (1-p)^{\binom{n}{r}-\binom{n-s}{r}+k} \exp(O(s^2/n)).
\end{equation}

Now $\binom{n}{r}-\binom{n-s}{r} = s n^{r-1}/(r-1)! +O(s^2n^{r-2})$.
Since $k\le s=O(s^2n^{r-2})$, we thus have
\[
 \binom{n}{r}-\binom{n-s}{r}+k = \frac{sn^{r-1}}{(r-1)!}(1+O(s/n)).
\]
Since $p=O(d/n^{r-1}) = O(d/n)$, we have $-\log(1-p)=p+O(p^2)=p(1+O(d/n))$.
Thus
\begin{multline}\label{factor}
 (1-p)^{\binom{n}{r}-\binom{n-s}{r}+k} =  \exp\left(- p\frac{ sn^{r-1}}{(r-1)!} (1+O((d+s)/n)) \right) \\
  = \exp(-ds(1+O(d+s)/n))   = \exp(-ds+O(d^2s^2/n)),
\end{multline}
where in the second step we used again the definition $p=d(r-1)!n^{-(r-1)}$, and in the last
step we bounded $d+s$ by $ds$ just to keep the formula compact.
Since $s-(r-1)k=1$, combining \eqref{ETk1} and \eqref{factor} we obtain
\begin{equation}\label{ETk}
 \E[T_k] = n \frac{s^{k-2}d^k}{k!}\exp(-ds +O(d^2s^2/n)) = n\frac{\pi_k}{s} \exp(O(d^2s^2/n)),
\end{equation}
using Lemma~\ref{pkbd} in the last step.

For $s\le s_0$ we have $d^2s^2/n=O(d^2s_0^2/n) = O((\log n)^2/n)=o(1)$, so we may write the $\exp(O(d^2s^2/n))$
error term as $1+O(d^2s^2/n)$, giving the result.

For $s_0<s\le s_1$, from \eqref{ETk}, Corollary~\ref{decay}
and the bound $ds/n\le ds_1/n=o(1)$ (see \eqref{ds1}), we have
\[
 \E[T_k] \le n\exp(-ds/2+O(d^2s^2/n)) \le n\exp(-ds/4)
\]
if $n$ is large enough. Since $ds\ge ds_0\ge 1000\log n$, this completes the proof.
\end{proof}

\begin{corollary}\label{Csmall}
With probability $1-o(n^{-98})$ the hypergraph $\Hrnp$ consists of a `giant'
component with at least $n-s_1> n/2$
vertices together with `small' components, each with at most $s_0$ vertices.
\end{corollary}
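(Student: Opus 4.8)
The plan is to deduce this directly from Lemmas~\ref{nocut}, \ref{nocyc} and~\ref{lEtk} by a union bound, since between them these lemmas control components of every size. First I would apply Lemma~\ref{nocut}: outside an event $E_1$ of probability $o(n^{-100})$, the largest component $C_1$ of $\Hrnp$ has at least $n-s_1>n/2$ vertices. On the complement of $E_1$ the remaining at most $s_1$ vertices are distributed among the other components, so $C_1$ is the \emph{unique} component with more than $n/2$ vertices and every other component has fewer than $n/2$ vertices.

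It then remains to rule out ``medium'' components, those with between $s_0$ and $n/2$ vertices (recall $s_0<s_1<n/2$ for $n$ large, as noted above). Such a component either is an $r$-tree or contains a cycle. For the non-tree case, the final assertion of Lemma~\ref{nocyc} gives that the probability of a non-tree component with more than $s_0$ but fewer than $n/2$ vertices is $o(n^{-99})$; call this event $E_2$. For the tree case, the final assertion of Lemma~\ref{lEtk} gives that the expected number of tree components with between $s_0$ and $n/2$ vertices is $o(n^{-99})$, so by Markov's inequality the event $E_3$ that at least one such component exists has probability $o(n^{-99})$.

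Finally, on $(E_1\cup E_2\cup E_3)^\cc$, which has probability $1-o(n^{-99})=1-o(n^{-98})$, the hypergraph $\Hrnp$ has a component $C_1$ with at least $n-s_1>n/2$ vertices, and every other component has at most $s_0$ vertices: none has between $s_0$ and $n/2$ vertices (avoiding $E_2\cup E_3$), and none other than $C_1$ has at least $n/2$ vertices (avoiding $E_1$). This is exactly the claimed structure. I do not expect a genuine obstacle here: the only point requiring care is that the size ranges $\{s\le s_0\}$, $\{s_0<s<n/2\}$ and $\{s\ge n/2\}$ together exhaust all possibilities, and that $s_0<s_1<n/2$ so that the ``giant'' and ``small'' regimes do not overlap — both facts are recorded in the text immediately preceding the statement.
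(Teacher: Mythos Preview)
Your proposal is correct and is precisely the argument the paper has in mind: the paper's own proof is the single line ``Immediate from Lemmas~\ref{nocut}, \ref{nocyc} and~\ref{lEtk}'', and your union-bound over $E_1,E_2,E_3$ (giant via Lemma~\ref{nocut}, medium non-tree via the last part of Lemma~\ref{nocyc}, medium tree via the last part of Lemma~\ref{lEtk} and Markov) is exactly the intended expansion of that line.
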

\begin{proof}
Immediate from Lemmas~\ref{nocut}, \ref{nocyc} and \ref{lEtk}.
\end{proof}

\section{Key parameters}\label{sec_means}

As in the previous section, fix $r\ge 2$,
let $d=d(n)\to\infty$ with $\log n-d\to\infty$, as in \eqref{dconds},
and set $p=p(n)=d\frac{(r-1)!}{n^{r-1}}$.
Define $\xi=\xi(d)$ by \eqref{xieqn}, recalling that $\xi\sim e^{-d}$ as $d\to\infty$.
Let $L_1$ and $M_1$ denote the numbers of vertices and edges in the largest component of
the random hypergraph $\Hrnp$,
chosen according to any rule if there is tie. (With high probability there will not be a tie.)
\begin{lemma}\label{EL1}
Under the assumptions above we have
\[ 
 \E[L_1] = n(1-\xi) + o(1)
\]
and
\[
 \Var[L_1] \sim \mu_0 = ne^{-d}.
\]
\end{lemma}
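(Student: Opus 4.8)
The plan is to work with the complement $n - L_1$, the number of vertices lying outside the giant component. By Corollary~\ref{Csmall}, with probability $1 - o(n^{-98})$ every such vertex lies in a tree component with at most $s_0$ vertices, so up to a negligible error (the contribution of the bad event to both $\E[L_1]$ and $\Var[L_1]$ is $O(n \cdot n \cdot o(n^{-98})) = o(1)$, using the trivial bound $|n - L_1| \le n$) we may write $n - L_1 = \sum_{k \ge 0} s\, T_k$ where $s = 1 + (r-1)k$ and the sum is effectively over $0 \le k \le k_0$. First I would compute $\E[n - L_1] = \sum_k s\, \E[T_k] + o(1)$ using Lemma~\ref{lEtk}. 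This gives $\sum_k s \cdot n \frac{\pi_k}{s}(1 + O(d^2 s^2/n)) = n \sum_k \pi_k + n \sum_k \pi_k O(d^2 s^2 / n)$. The first sum is $n \sum_k \pi_k = n \cdot \Pr(e(\bp_{r,d}) < \infty) = n\xi$ (recall $\xi$ is the extinction probability of $\bp_{r,d}$), and I must check the error sum is $o(1)$: using Corollary~\ref{decay}, $\pi_k \le e^{-d(s+1)/2}$, so $\sum_k \pi_k d^2 s^2 \le d^2 \sum_k s^2 e^{-d(s+1)/2}$, which is $O(d^2 e^{-d})$ and hence $o(1)$. Thus $\E[L_1] = n(1-\xi) + o(1)$.

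For the variance, I would express things via $\Var[n - L_1] = \Var[\sum_k s\, T_k]$. The natural route is a second-moment computation: expand $\E\big[(\sum_k s T_k)^2\big] = \sum_{k,\ell} s(k) s(\ell) \E[T_k T_\ell]$, where $\E[T_k T_\ell]$ counts ordered pairs of vertex-disjoint tree components of the given sizes. One computes $\E[T_k T_\ell]$ by the same inclusion-style argument as in Lemma~\ref{lEtk} — choose the two disjoint vertex sets, count the trees on each, put in the edge probabilities and the $(1-p)$ factor for the (now larger) set of forbidden crossing edges — and finds $\E[T_k T_\ell] = \E[T_k]\E[T_\ell](1 + o(1))$ with a controlled error, plus the diagonal correction when the two components coincide. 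Comparing with $(\E[\sum_k s T_k])^2 = \sum_{k,\ell} s(k)s(\ell)\E[T_k]\E[T_\ell]$, the off-diagonal terms cancel to leading order and the surviving contribution comes from the diagonal $k = \ell$ together with the first-order corrections to $\E[T_k T_\ell]$. The dominant piece should be $\sum_k s(k)^2 \E[T_k] = \sum_k s \cdot n\pi_k(1 + o(1)) = n \sum_k s\pi_k (1+o(1))$; and since $\pi_0 = e^{-d}$ with $s = 1$ dominates the sum $\sum_k s\pi_k$ (again by the rapid decay in Corollary~\ref{decay}, the $k \ge 1$ terms contribute only $O(d e^{-d})$ relatively), this is $(1 + o(1)) n e^{-d} = (1+o(1))\mu_0$. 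I would then check that the $T_k^2$ variance terms and the first-order cross-corrections contribute only $o(n e^{-d})$, which should follow from the same exponential-decay estimates since every such term carries at least one extra factor of $e^{-d}$ or $d^2 s^2/n$.

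The main obstacle is controlling the error terms in $\E[T_k T_\ell]$ uniformly and verifying that the off-diagonal cancellation is genuinely accurate to within $o(n e^{-d}) = o(\mu_0)$, not merely $o(n)$: since $\mu_0 = ne^{-d}$ can be as small as $\sqrt{n/\log n}$, the relative accuracy required is quite fine. The cleanest way to handle this is probably not a brute-force second moment, but rather a coupling/Poisson-approximation argument showing that the joint distribution of the small $(T_k)_{k \le k_0}$ is close to a vector of independent Poisson variables with means $n\pi_k/s$ (as in the proof of Lemma~\ref{negmo}), so that $\Var[\sum_k s T_k] \approx \sum_k s^2 \cdot n\pi_k/s = n\sum_k s\pi_k \sim ne^{-d}$; one then only needs that the dependence between components (competition for vertices, and the conditioning on the crossing edges being absent) perturbs the variance by $o(\mu_0)$, which again reduces to the decay estimates above together with the bound $d s_1 = o(n)$ from \eqref{ds1}. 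Either way, the crux is the quantitative independence of the small tree components, with the isolated vertices ($k=0$) providing the entire main term of both the mean deficit and the variance.
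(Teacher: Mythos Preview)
Your approach is essentially the paper's: write $n-L_1$ as the number of vertices in small components, compute $\E[n-L_1]=n\xi+o(1)$ via Lemma~\ref{lEtk} and Corollary~\ref{decay}, and for the variance carry out the second-moment calculation on $\sum_k sT_k$, splitting into a diagonal part $\sum_k s^2\E[T_k]\sim n\pi_0=\mu_0$ and an off-diagonal part that matches $(\E[\sum_k sT_k])^2$ to within $o(\mu_0)$. The paper makes exactly this computation, introducing $T_{k,k'}$ for ordered pairs of distinct tree components and using the explicit relation $\E[T_{k,k'}]=\E[T_k]\E[T_{k'}](1-p)^{-m_{s,s'}}$ with $m_{s,s'}\le ss'n^{r-2}$; this gives the $1+O(dss'/n)$ correction that you were worried about, and the resulting error sum is $O(nd^2e^{-2d})=o(ne^{-d})$, so the brute-force second moment is already sharp enough and no Poisson-approximation detour is needed.

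One genuine slip: Corollary~\ref{Csmall} does \emph{not} say that with probability $1-o(n^{-98})$ every small component is a tree; it only bounds their sizes by $s_0$. Small cyclic components are ruled out only with probability $1-o(1)$ (Lemma~\ref{nocyc}), so you cannot fold them into the negligible bad event. The paper handles this by writing $n-L_1=v_T+v_C$ (on the high-probability event of Corollary~\ref{Csmall}) and treating $v_C$ separately: since $v_C\le (r-1)X$ where $X$ is the number of edges in small cyclic components, Lemma~\ref{nocyc} gives $\E[v_C^2]\le (r-1)^2\E[X^2]=o(1)$, whence $\E[v_C]=o(1)$, $\Var[v_C]=o(1)$, and $\Covar[v_T,v_C]=o(\mu_0)$ by Cauchy--Schwarz. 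With this correction your argument goes through.
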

Recall that $\mu_0=\mu_0(n)=n e^{-d}$ is roughly the expected number of isolated vertices
in $\Hrnp$. Since $\xi\sim e^{-d}$, the lemma says that isolated vertices give the dominant contribution to
$\E[n-L_1]$, and (roughly speaking) that the Poisson-type distribution of the number of isolated vertices
is the dominant contribution to $\Var[L_1]$. The bound $\E[L_1]=n-\mu_0+o(\mu_0)$ would
not be precise enough when we come to apply our local limit theorem; we need a bound with an error
that is $o(\sqrt{\mu_0})$.
\begin{proof}
Let $v_T$ and $v_C$ denote the number of vertices in small tree and cyclic components, respectively,
where `small' means with at most $s_0$ vertices. By 
Corollary~\ref{Csmall}, with probability $1-o(n^{-98})$ we have
\begin{equation}\label{L1split}
 L_1 = n - v_T - v_C.
\end{equation}
Since all relevant quantities are bounded by $n$, it follows easily that
\begin{equation}\label{EL1split}
 \E[L_1] = n - \E[v_T] - \E[v_C] +o(n^{-97}) = n-\E[v_T] +o(1),
\end{equation}
using Lemma~\ref{nocyc}. Now
\[
 v_T = \sum_{k=0}^{k_0} s T_k
\]
where $T_k$ is the number of $k$-edge tree components, we write $s$ for $1+(r-1)k$ as usual,
and $k_0=(s_0-1)/(r-1)$. (We ignore the irrelevant rounding to integers.)
By Lemma~\ref{lEtk},
\[
 \E[v_T] = \sum_{k=0}^{k_0} n \pi_k (1+O(d^2s^2/n)).
\]
Now $\pi_0=e^{-d}$ and, by Corollary~\ref{decay}, $\pi_k\le e^{-d(s+1)/2}$.
Hence
\begin{equation}\label{s2tail}
 \sum_{k=0}^\infty \pi_k s^2 = e^{-d}+ O(e^{-d(r+1)/2}) \sim e^{-d}.
\end{equation}
Also $\sum_{k>k_0} \pi_k =o(n^{-99})$.
It follows that
\begin{equation}\label{EvT}
 \E[v_T] = n \sum_{k=0}^\infty \pi_k +O(d^2e^{-d}) = n\xi +o(1),
\end{equation}
which, with \eqref{EL1split} proves the first statement of the lemma.

Turning to the variance, from \eqref{L1split} we have
\begin{equation}\label{VL1}
 \Var[L_1] = \Var[v_T+v_C] +o(1) = \Var[v_T] + \Var[v_C] + \Covar[v_C,v_T] +o(1).
\end{equation}
Let $T_{k,k'}$ denote the number of ordered pairs of distinct tree components where the
first has $k$ edges and the second $k'$. Writing $s=1+(r-1)k$ and $s'=1+(r-1)k'$, and considering
separately pairs of vertices in the same or distinct tree components, we have
\begin{equation}\label{EvT2}
 \E[v_T^2] = \E\left[\left(\sum_{k=0}^{k_0} sT_k\right)^2\right]
 = \sum_{k\le k_0} s^2\E[T_k] + \sum_{k,k'\le k_0} ss'\E[T_{k,k'}].
\end{equation}
By Lemma~\ref{lEtk} again,
\begin{equation}\label{ssk}
 \sum_{k\le k_0} s^2\E[T_k] = n\sum_{k=0}^{k_0} s\pi_k (1+O(d^2s^2/n)) \sim n\pi_0 = \mu_0,
\end{equation}
using the rapid decrease of the $\pi_k$ for the last approximation.
On the other hand, writing $m_{s,s'}$ for the number of potential hyperedges that meet
both a given set of $s$ vertices and a given disjoint set of $s'$ vertices, we have
\[
 \E[T_{k,k'}] = \E[T_k] \E[T_{k'}] (1-p)^{-m_{s,s'}}.
\]
Since $m_{s,s'} \le ss'n^{r-2}$, $p=O(d n^{-r+1})$, and $ss'd=o(n)$ for $s$, $s'\le s_0$, it follows that
\[
 ss' \E[T_{k,k'}] =s \E[T_k]s' \E[T_{k'}] (1+O(dss'/n)) = n^2\pi_k\pi_{k'}(1+O(d^2(s+s')^2/n)),
\]
by Lemma~\ref{lEtk}.
Arguing as for \eqref{EvT} above, it follows that
\begin{multline}\label{ss'k}
 \sum_{k,k'\le k_0} ss'\E[T_{k,k'}] = n^2 \sum_{k,k'\le k_0} \pi_k\pi_{k'} + O(n d^2e^{-2d}) \\
 = n^2\xi^2 + o(ne^{-d})  = n^2\xi^2+o(n\xi).
\end{multline}
Indeed, from the rapid decay of $\pi_k$ as $k$ increases, the dominant contribution to the error
term is from the case $k=k'=0$; this contribution is $O(n^2\pi_0^2d^2/n) = O(ne^{-2d}d^2)$.

Putting the pieces together, from \eqref{EvT2}, \eqref{ssk}, \eqref{ss'k}, \eqref{EvT}
and the fact that $n\xi\sim ne^{-d}=\mu_0$, it follows that
\begin{multline}\label{VvT}
 \Var[v_T^2] = \E[v_T^2] - \E[v_T]^2 = (1+o(1))\mu_0 + n^2\xi^2 +o(n\xi) - (n\xi+o(1))^2 \\
 = (1+o(1))\mu_0. 
\end{multline}
It remains only to note that from Lemma~\ref{nocyc} we have $\Var[v_C]\le \E[v_C^2] = o(1)$
and hence $\Covar[v_C,v_T]\le (\Var[v_C]\Var[v_T])^{1/2}=o(\mu_0)$.
Then, recalling \eqref{VL1}, the result follows.
\end{proof}

\begin{lemma}\label{EM1}
We have
\[
 \E[M_1] = \frac{d(1-\xi^r)}{r} n +O(d).
\]
\end{lemma}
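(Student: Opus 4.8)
The plan is to compute $\E[M_1]$ by the same decomposition strategy used for $\E[L_1]$ in Lemma~\ref{EL1}: the total number of edges is a deterministic-looking random variable, so I write $M_1 = e(\Hrnp) - e_T - e_C$, where $e_T$ and $e_C$ count edges in small tree components and small cyclic components respectively, this identity holding with probability $1-o(n^{-98})$ by Corollary~\ref{Csmall}. First I would take expectations: $\E[e(\Hrnp)] = p\binom{n}{r}$, and since $p = d(r-1)!/n^{r-1}$ this equals $\frac{d}{r}n + O(d)$ (the correction from $\binom{n}{r}$ versus $n^r/r!$ contributing $O(dn^{-1}\cdot n) = O(d)$, and similarly for lower-order terms; one must be a little careful that this error is genuinely $O(d)$ and not larger). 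Since all the relevant quantities are at most $e(\Hrnp) \le \binom{n}{r}$, and the exceptional event has probability $o(n^{-98})$, the contribution of the exceptional event to the expectation is negligible, so $\E[M_1] = \frac{d}{r}n - \E[e_T] - \E[e_C] + o(1)$.

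Next I would dispose of the cyclic term: by Lemma~\ref{nocyc}, if $X$ is the number of edges in non-tree components with at most $s_0$ vertices then $\E[X^2] = o(1)$, hence $\E[e_C] = \E[X] \le \E[X^2]^{1/2} = o(1)$ (also using $s_0 < s_1$ and that cyclic components of size between $s_0$ and $n/2$ occur with probability $o(n^{-99})$, contributing negligibly). So the whole problem reduces to estimating $\E[e_T] = \sum_{k=0}^{k_0} k\,\E[T_k]$, with $k_0 = (s_0-1)/(r-1)$. By Lemma~\ref{lEtk}, $\E[T_k] = n\frac{\pi_k}{s}(1+O(d^2s^2/n))$, so $\E[e_T] = n\sum_{k=0}^{k_0} \frac{k\pi_k}{s}(1+O(d^2s^2/n))$. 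The main term I want to identify is $n\sum_{k\ge 0} \frac{k}{s}\pi_k$; I would argue the tail $k > k_0$ contributes $o(n^{-99})$ (from the rapid decay in Corollary~\ref{decay}, since $k\le s$ and $\sum_{k > k_0}\pi_k s = o(n^{-99})$), and the error factors contribute $O(n\sum_k \frac{k}{s}\pi_k \cdot d^2 s^2/n) = O(d^2 e^{-d})$ since $k s / s \le s$ and $\sum_k s^2 \pi_k \sim e^{-d}$ as established in \eqref{s2tail}, times $d^2$ — so this error is $O(d^2 e^{-d}\cdot n / n)$... let me be careful: it is $O(\sum_k k \pi_k d^2 s^2 / s) = O(d^2 \sum_k s^3 \pi_k) = O(d^2 e^{-d})$, which is $o(1)$. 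Hence $\E[M_1] = \frac{d}{r}n - n\sum_{k\ge 0}\frac{k}{s}\pi_k + O(d)$.

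The crux is therefore the identity $\sum_{k\ge 0}\frac{k}{s}\pi_{k,r,d} = \frac{d}{r} - \frac{d(1-\xi^r)}{r}\cdot\frac{1}{?}$ — more precisely I expect to need $n\big(\frac{d}{r} - \sum_k \frac{k}{s}\pi_k\big) = \frac{d(1-\xi^r)}{r}n + O(d)$, i.e.\ $\sum_k \frac{k}{s}\pi_k = \frac{d\xi^r}{r}$. I would prove this by the same branching-process / subcritical-random-hypergraph bookkeeping used in Lemma~\ref{negmo}: recall $\pi_k/s$ is the per-vertex density of $k$-edge tree components, so $\sum_k \frac{k}{s}\pi_k$ is (the limit of) the expected number of edges-in-tree-components per vertex in a subcritical $\Hrnp$, while $\sum_k \frac{1}{s}\pi_k = 1 - (r-1)d/r$ counts components per vertex and $\sum_k \pi_k$ counts vertices-in-tree-components per vertex. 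Combining: edges per vertex is $d/r$ in total; by the dual-process description (Section~\ref{sec_dual}), conditioning a supercritical $\bp_{r,d}$ on extinction gives $\bp_{r,d^*}$ with $d^* = d\xi^{r-1}$, and $\E[e(\bp_{r,d^*})] = \frac{1}{1-(r-1)d^*}\cdot d^*$ adjusted appropriately — actually the cleanest route is: $\sum_k \frac{k}{s}\pi_k = \xi \cdot \E[e(\bp_{r,d^*})]/\E[|\bp_{r,d^*}|]$ arranged so that a short generating-function computation from Lemma~\ref{pkbd} gives exactly $\frac{d\xi^r}{r}$. The main obstacle I anticipate is precisely pinning down this sum: either via a clean algebraic manipulation of $\sum_k \frac{k}{s}\frac{s^{k-1}d^k}{k!}e^{-ds}$ using the defining relation $\xi = e^{-d(1-\xi^{r-1})}$, or via the probabilistic interpretation, making sure the error from truncating at $k_0$ and from Lemma~\ref{lEtk}'s multiplicative error both stay within the claimed $O(d)$ (equivalently $o(\sqrt{dn})$, which is all that is needed for the local limit theorem application).
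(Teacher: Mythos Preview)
Your approach is essentially the paper's: the same decomposition $M_1 = e(\Hrnp) - e_T - e_C$ via Corollary~\ref{Csmall}, the same handling of $\E[e(\Hrnp)]=dn/r+O(d)$ and $\E[e_C]=o(1)$, the same use of Lemma~\ref{lEtk} and Corollary~\ref{decay} to reduce $\E[e_T]$ to $n\sum_{k\ge 0}\frac{k}{s}\pi_k+o(1)$, and the same target identity $\sum_{k\ge 0}\frac{k}{s}\pi_k=\frac{d\xi^r}{r}$ via duality.

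Your execution of that last identity is slightly muddled, though: the expression $\xi\cdot\E[e(\bp_{r,d^*})]/\E[|\bp_{r,d^*}|]$ is not what you want, since a ratio of expectations is not the expectation of the ratio. The paper's clean trick is to use $(r-1)k=s-1$, so that
\[
 (r-1)\sum_{k\ge 0}\frac{k}{s}\pi_k \;=\; \sum_{k\ge 0}\frac{s-1}{s}\pi_k \;=\; \sum_{k\ge 0}\pi_k - \sum_{k\ge 0}\frac{1}{s}\pi_k \;=\; \xi - \E\bigl[|\bp_{r,d}|^{-1}\bigr].
\]
Then duality (Section~\ref{sec_dual}) plus Lemma~\ref{negmo} give $\E[|\bp_{r,d}|^{-1}]=\xi\,\E[|\bp_{r,d^*}|^{-1}]=\xi\bigl(1-(r-1)d^*/r\bigr)$, and hence $\sum_k\frac{k}{s}\pi_k=\frac{\xi d^*}{r}=\frac{d\xi^r}{r}$, exactly as needed.
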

\begin{proof}
Calling a component `small' if it has at most $s_0$ vertices,
let $e_T$ be the number of edges in small tree components
and $e_C$ the number in small cyclic components. By Corollary~\ref{Csmall}, with
probability $1-o(n^{-98})$ we have
\begin{equation}\label{M1split}
 M_1 = e(\Hrnp) - e_T - e_C.
\end{equation}
Since $\E[e(\Hrnp)]=p\binom{n}{r} = dn/r+O(d)$, it follows that
\begin{equation}\label{EMsplit}
 \E[M_1] = dn/r - \E[e_T] - \E[e_C] +O(d).
\end{equation}
Now
\begin{equation}\label{ecb}
 \E[e_C]\le \E[e_C^2] = o(1)
\end{equation}
by Lemma~\ref{nocyc}. On the other hand, writing $s=1+(r-1)k$ as usual, and setting $k_0=(s_0-1)/(r-1)$,
\[
 \E[e_T] = \sum_{k=1}^{k_0} k \E[T_k] = \sum_{k=0}^{k_0} \frac{kn}{s} \pi_k(1+O(d^2s^2/n)),
\]
by Lemma~\ref{lEtk}. Using Corollary~\ref{decay} as before to bound both the tail
of the sum and the contribution from the $O(d^2s^2/n)$ term (see~\eqref{s2tail}),
it follows that
\begin{multline*}
 (r-1) \E[e_T] = n \sum_{k=0}^\infty \frac{(r-1)k}{s} \pi_k + o(1) \\
 = n \sum_{k=0}^\infty \frac{s-1}{s} \pi_k + o(1) = \bb{ \xi  -\E[|\bp_{r,d}|^{-1}] }n +o(1),
\end{multline*}
since $\xi=\Pr(|\bp_{r,d}|<\infty)=\sum_k \pi_k$.

Recall from Section~\ref{sec_dual}
that the conditional distribution of $|\bp_{r,d}|$ given that it is finite is exactly
the distribution of $|\bp_{r,d^*}|$, where $d^*=d\xi^{r-1}$ is the dual parameter,
as in \eqref{dstardef}.
It follows by Lemma~\ref{negmo} that
\[
  \E[|\bp_{r,d}|^{-1}] = \Pr(|\bp_{r,d}|<\infty) \E[|\bp_{r,d^*}^{-1}|] = \xi (1-(r-1)d^*/r).
\]
Thus
\begin{equation}\label{EeT}
 \E[e_T] = \frac{\xi - \E[|\bp_{r,d}|^{-1}]}{r-1}n  +o(1) = \frac{\xi d^*}{r}n +o(1).
\end{equation}
From \eqref{EMsplit}, \eqref{ecb} and \eqref{EeT} we have
\[
 \E[M_1] = \frac{d-\xi d^*}{r} n +O(d) = \frac{d(1-\xi^r)}{r} n +O(d),
\]
completing the proof.
\end{proof}

\begin{lemma}\label{VM1}
We have
\[
 \Var[M_1] \sim \frac{dn}{r}.
\]
\end{lemma}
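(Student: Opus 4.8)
The plan is to reuse the decomposition from the proof of Lemma~\ref{EM1}: on the event $\cG$ of probability $1-o(n^{-98})$ supplied by Corollary~\ref{Csmall} we have $M_1 = e(\Hrnp) - e_T - e_C$, where $e_T$ and $e_C$ denote the numbers of edges in small tree and small cyclic components respectively (`small' meaning at most $s_0$ vertices). I expect the whole variance to come from $\Var[e(\Hrnp)]$, so the work is to show that $e_T$ and $e_C$ contribute nothing at the scale $dn$.

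First I would replace $M_1$ by $M_1' := e(\Hrnp) - e_T - e_C$. Both lie in $[0,e(\Hrnp)]$ and agree on $\cG$, so for $j\in\{1,2\}$ we have $|\E[M_1^j]-\E[(M_1')^j]| = O\bigl(\E[e(\Hrnp)^2\ind{\cG^\cc}]\bigr)$; since standard binomial moment bounds give $\E[e(\Hrnp)^4] = O((dn)^4)$ while $\Pr(\cG^\cc) = o(n^{-98})$, Cauchy--Schwarz yields $\E[e(\Hrnp)^2\ind{\cG^\cc}] = o(1)$, hence $\Var[M_1] = \Var[M_1'] + o(dn)$. This is good enough, the target being of order $dn$. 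Next, since $p = o(1)$ and $\binom nr p = dn/r+O(d)$ (as in Lemma~\ref{EM1}), we get $\Var[e(\Hrnp)] = \binom nr p(1-p) \sim dn/r$.

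It therefore suffices to prove $\Var[e_T] = o(dn)$ and $\Var[e_C] = o(1)$: expanding $\Var[M_1'] = \Var[e(\Hrnp)] + \Var[e_T] + \Var[e_C] - 2\Covar[e(\Hrnp),e_T] - 2\Covar[e(\Hrnp),e_C] + 2\Covar[e_T,e_C]$, each cross-term is then $o(dn)$ by Cauchy--Schwarz, e.g.\ $|\Covar[e(\Hrnp),e_T]| \le (\Var[e(\Hrnp)]\,\Var[e_T])^{1/2} = o(dn)$. The bound $\Var[e_C] \le \E[e_C^2] = o(1)$ is immediate from Lemma~\ref{nocyc}. For $e_T = \sum_{k\ge1} k T_k$ I would run the computation of $\Var[v_T]$ from the proof of Lemma~\ref{EL1} with the weight $s=1+(r-1)k$ there replaced by $k$: split $\E[e_T^2] = \sum_k k^2\E[T_k] + \sum_{k,k'} kk'\E[T_{k,k'}]$ analogously to \eqref{EvT2} (with $T_{k,k'}$ the number of ordered pairs of distinct tree components with $k$ and $k'$ edges), and use Lemma~\ref{lEtk} together with $\E[T_{k,k'}] = \E[T_k]\E[T_{k'}](1-p)^{-m_{s,s'}} = \E[T_k]\E[T_{k'}](1+O(dss'/n))$. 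The diagonal sum $\sum_k k^2\E[T_k]$ is dominated by its $k=1$ term and is $O(nde^{-rd})$, by Lemma~\ref{pkbd} and the rapid decay of the $\pi_k$ (Corollary~\ref{decay}); the off-diagonal sum minus $\E[e_T]^2 = \bigl(\sum_k k\E[T_k]\bigr)^2$ equals $O\bigl((d/n)(\sum_k ks\E[T_k])^2\bigr) = O(d^3 n e^{-2rd})$. Since $d\to\infty$ (so $de^{-rd},d^3e^{-2rd}\to0$), both are $o(dn)$, giving $\Var[e_T] = o(dn)$ and the lemma. (For $r=2$ a few extra $n^{-1}$-type terms appear, as in Remark~\ref{rem2}, but of strictly lower order.)

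The routine-but-fiddly step is this last estimate for $\Var[e_T]$: one must verify that both $\sum_k k^2\E[T_k]$ and the $O(dss'/n)$ correction to the off-diagonal sum are negligible against $dn$. The only other subtlety is that $e_T$ is not bounded deterministically, so discarding the bad event $\cG^\cc$ requires the moment bound on $e(\Hrnp)$ above rather than the trivial bound $L_1\le n$ used in Lemma~\ref{EL1}. Everything else is Cauchy--Schwarz on top of Lemmas~\ref{nocyc}, \ref{lEtk}, \ref{EL1} and \ref{EM1}.
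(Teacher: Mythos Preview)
Your proposal is correct and follows essentially the same route as the paper: decompose $M_1 = e(\Hrnp)-e_T-e_C$ on the high-probability event from Corollary~\ref{Csmall}, observe $\Var[e(\Hrnp)]\sim dn/r$, and show $\Var[e_C]=o(1)$ and $\Var[e_T]=o(dn)$ by rerunning the $\Var[v_T]$ computation from Lemma~\ref{EL1} with weights $k$ in place of $s$. You are more explicit than the paper about handling the exceptional event (via Cauchy--Schwarz and a fourth-moment bound on $e(\Hrnp)$) and the cross-covariances, but the argument is the same.
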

\begin{proof}
Clearly, $\Var[e(\Hrnp)] = \binom{n}{r}p(1-p) \sim p\binom{n}{r} \sim \frac{dn}{r}$.
Define $e_T$ and $e_C$ as in the proof of Lemma~\ref{EM1}. We claim that 
\begin{equation}\label{VeTaim}
 \Var[e_T] = o(dn).
\end{equation}
Let us first show that this implies the result. Indeed, from Lemma~\ref{nocyc} we have
$\Var[e_C]\le \E[e_C^2]=o(1)=o(dn)$. Hence $\Var[e_T+e_C]=o(dn)$, and from \eqref{M1split}
it easily follows that $\Var[M_1]\sim \frac{dn}{r}$. 
%The final statement follows since $N_1=(r-1)M_1-L_1+1$ and $\Var[L_1]\sim ne^{-d}=o(n)$.

To establish \eqref{VeTaim} we argue as in the proof of \eqref{VvT}, using $e_T=\sum_{k\le k_0} kT_k$
in place of $v_T=\sum_{k\le k_0}sT_k$. The argument is essentially identical, leading
to the conclusion
$\Var[e_T] = \E[e_T] + o(n\xi)$.
Since $\xi$ and $d^*$ are $o(1)$, from \eqref{EeT} we have $\E[e_T]=o(n)$.\
Hence $\Var[e_T] =o(n)=o(dn)$.
\end{proof}

\section{Proof of Theorem~\ref{thLLT}}\label{sec_pf}

In this section we prove Theorem~\ref{thLLT}. This will require some further preparation.

Let $d=d(n)$ satisfy $d\to\infty$ and $\log n-d\to\infty$ as $n\to\infty$.
Note that by \eqref{ndded}, $n d e^{-d}/\log n\to \infty$.
Later, in various error terms we shall consider a function $\gamma(n)$ tending to zero slowly:
pick $\gamma=\gamma(n)$ such that
\begin{equation}\label{gammarate}
 \gamma\to 0,\quad \gamma d\to\infty \hbox{\quad and\quad}  \frac{\gamma^2 n d e^{-d}}{\log n}\to \infty
\end{equation}
as $n\to\infty$.

As before, let $p=d (r-1)! n^{-r+1}$. 
Set $d_1=\sqrt{d}$, and choose $d_2$ so that $p_1+p_2-p_1p_2=p$
where $p_i=d_i (r-1)! n^{-r+1}$. Note that
\begin{equation}\label{d1d2}
 d_1\to\infty,\quad d_2\sim d \hbox{\quad and\quad} d^2e^{-d_1} \to 0.
\end{equation}
Since $p_1+p_2=p(1+O(dn^{-r+1}))=p(1+O(d/n))$, we have
\begin{equation}\label{dsum}
 d_1+d_2 = d+O(d^2/n) = d+o(\gamma),
\end{equation}
since $d\le \log n$ for $n$ large and so (since $\gamma d\to\infty$) $\gamma\ge 1/\log n$.

Let $H_1$ and $H_2$ be independent random hypergraphs on the same vertex set $[n]$, with $H_i$
having the distribution of $\Hrnpi$. Clearly, $H=H_1\cup H_2$ has the distribution of $\Hrnp$.
We shall call the edges of $H_1$ \emph{red} and those of $H_2$ \emph{blue}. Note that there
may be a red and a blue edge on the same set of $r$ vertices.

The idea of the proof is as follows: we shall reveal the graph $H_1$ and some partial information
about $H_2$. We write the pair $(L_1,M_1)$ as $(L,M)+(X,Y)$ where $(L,M)$ is determined by the
revealed information, and the conditional distribution of $(X,Y)$ is with very high probability
a fixed, very simple distribution. The latter distribution (essentially two
independent binomial random variables)
will satisfy a local limit theorem. We will also have $\Var[X]\sim \Var[L_1]$ and $\Var[Y]\sim \Var[M_1]$.
This will easily imply that $L$ and $M$ are concentrated on the relevant scales, allowing us to 
transfer the local limit theorem to $(L_1,M_1)$. Related smoothing ideas were used in \cite{C-OMS,BC-OK1,smoothing},
though in a much more complicated way -- there, part of the starting point was a central
limit theorem. Here we do not need this, since our `smoothing distribution' $(X,Y)$ has asymptotically
the entire variance of the original distribution.

Roughly speaking, the partial information will be as follows: we reveal $H_1$ and find with very
high probability a very large connected component. We reveal all edges of $H_2$ except
those of the following form: ones within the giant component of $H_1$ (`internal edges' below),
and ones consisting of $r-1$ vertices in this giant component and one vertex that is otherwise
isolated (`peripheral edges'). Then $X$ and $Y$ will be, roughly speaking, the numbers of
peripheral and internal edges present. To obtain fixed distributions for $X$ and $Y$ we work
with a subset of the giant component of $H_1$ of a fixed size $b=b(n)$, and a fixed number $i=i(n)$
of vertices outside on which we allow peripheral edges. We will need to show that with
very high probability $L_1(H_1)\ge b$, and that there are enough of these outside vertices.

Turning to the details,
note that since $d_1=\sqrt{d}\le\sqrt{\log n}$ we have $e^{-d_1}=n^{-o(1)}$, so
\begin{equation}\label{nd1}
 n e^{-d_1} = n^{1-o(1)}.
\end{equation}
By analogy with \eqref{s1def}, but replacing $d$ by $d_1$, define $s_{1,1} = 100 n e^{-d_1}$ and set
\[
 b = \ceil{n-s_{1,1}} \sim n.
\]
Let $\cG_1$ be the `good' event
\[ 
 \cG_1 = \{ L_1(H_1) \ge b \}.
\]
By Lemma~\ref{nocut}, applied with $d_1$ in place of $d$, we have
\begin{equation}\label{cG1} 
 \Pr( \cG_1^\cc)  = o(n^{-100}).
\end{equation}
Whenever $\cG_1$ holds, let $B=B(H_1)$ be a set of $b$ vertices from the largest component
of $H_1$, say the first $b$ in numerical order. When $\cG_1$ does not hold, we take $B=\emptyset$.
(This is just so that $B$ is always defined; we will never use $B$ in this case.)

Let $\cG_2$ be the event that $e(H_1)\le n^{3/2}$, say. Since,
crudely, $\E[e(H_1)]=p_1\binom{n}{r}=O(n\log n)$, and
$e(H_1)$ has a binomial distribution, we certainly have
\begin{equation}\label{cG2}
 \Pr(\cG_2^\cc)=o(n^{-100}).
\end{equation}
Set
\begin{equation}\label{Mdef}
 \MM = \binom{b}{r} - \ceil{n^{3/2}} \sim \binom{n}{r}.
\end{equation}
When $\cG_1\cap \cG_2$ holds, we select a set $E$ of $r$-element subsets of $B$, none of which
is an edge of $H_1$, with $|E|=\MM$.
When $\cG_1\cap \cG_2$ does not hold, set $E=\emptyset$.
We call an edge $e$ of $H_2$ \emph{internal} if $e\in E$.

As a first step towards defining `peripheral' edges,
let us call a vertex $v\notin B$ \emph{peripheral} if either $v$ is isolated in $H$, or $v$ is in precisely
one edge $e$, and that edge $e$ is blue and consists of $v$ together with $r-1$ vertices in $B$.
Let $P_0$ be the set of peripheral vertices. Let
\begin{equation}\label{idef}
 i = \ceil{(1-\gamma) n d_2 e^{-d}},
\end{equation}
let $\cG_3$ be the event
\[
 \cG_3= \{ |P_0| \ge i\},
\]
and set
\[
 \cG = \cG_1\cap \cG_2\cap \cG_3.
\]
When $\cG$ holds, let $P$ consist of the first $i$ vertices in $P_0$ in some arbitrary order;
otherwise, set $P=\emptyset$.
We call an edge $e$ of $H_2$ \emph{peripheral} if it consists of 
one vertex in $P$ and $r-1$ vertices in $B$.
Given the pair $(H_1,H_2)$, let $\Ei$ be the set of internal blue edges, and let
$\Ep$ be the set of peripheral blue edges.

\begin{figure}[!t]
\centering
\[ \epsfig{file=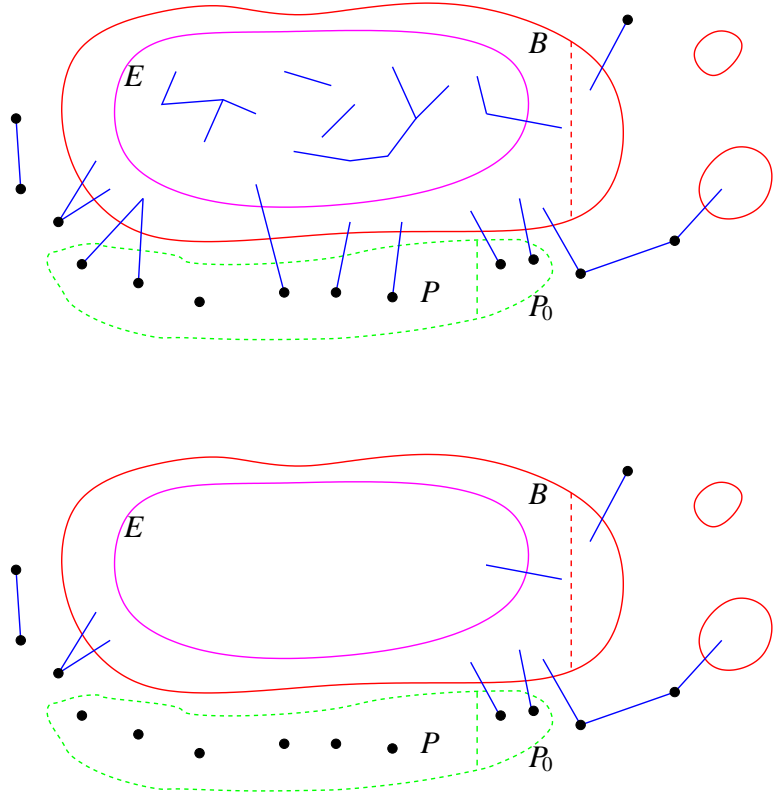,width=4in} \]
\caption{An example of the coloured graph $(H_1,H_2)$ (top) and the corresponding reduced graph $(H_1,H_2^-)$
(bottom), assuming $\cG$ holds.
The red blobs indicate components of $H_1$; vertices isolated in $H_1$ are shown as black dots. One
component of $H_1$ contains almost all vertices. $B$ is a large subset of this component with $|B|=b$.
$E$ indicates a set of potential edges, all inside $B$. Edges of $H_2$ are blue; there are many more
than are shown.
$P_0$ consists of those vertices $v$ that are isolated in $H_1$ and in at most one blue edge, with that
edge consisting of $v$ and $r-1$ vertices in $B$. $P$ is a subset of $P_0$ of a fixed size $i$. In reducing
$(H_1,H_2)$ we delete all blue edges in $E$, and all blue edges between $P$ and $B$.}\label{figR}
\end{figure}

Define the \emph{reduced (blue) hypergraph} to be
\[
 H_2^-=H_2-\Ei-\Ep,
\]
and let $H^-=H_1\cup H_2^-$; see Figure~\ref{figR}. Note that any $v\in P$ is isolated in $H_2^-$.
We shall condition on the pair $(H_1,H_2^-)$.
%we write $\cF$ for the $\sigma$-algebra generated by this random variable.

\begin{remark}\label{Rreadoff}
A key point is that we can determine $B$, $E$, $P_0$ and $P$, and hence whether $\cG$ holds,
knowing only the reduced graph $(H_1,H_2^-)$,
without knowing the original value of $H_2$. For $B$ and $E$ this is immediate; they are defined
in terms of the red graph $H_1$. With $H_1$ and hence $B$ fixed, for any possible value $H_2'$ of $H_2$,
let us temporarily write $P(H_1,H_2')$
for the set of peripheral vertices in $(H_1,H_2')$. Thus $v\in P(H_1,H_2')$ if and only if $v$
is isolated in $H_1$, is in at most one edge of $H_2'$ of the form $\{v,b_1,\ldots,b_{r-1}\}$
with $b_i\in B$, and is in no other edges of $H_2'$. The key observation is that
deleting internal or peripheral blue edges does not change $P(H_1,\cdot)$.
Thus $P_0=P(H_1,H_2)=P(H_1,H_2^-)$ is a function of $H_1$ and $H_2^-$ as claimed.
Since $P$ is defined in terms of $P_0$ only, $P$ is also a function of $H_1$ and $H_2^-$.
\end{remark}

\begin{lemma}\label{adds}
If $n$ is large enough, then whenever $\cG$ holds we have
\[
 L_1 = L + |\Ep|\hbox{\quad and\quad}M_1=M+|\Ep|+|\Ei|
\]
for some quantities $L$, $M$ that are functions of $(H_1,H_2^-)$.
\end{lemma}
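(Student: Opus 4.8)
The plan is to define $L$ and $M$ purely in terms of the reduced graph $H^-=H_1\cup H_2^-$, and then to check that rebuilding $H=H^-\cup\Ei\cup\Ep$ enlarges the largest component by exactly $|\Ep|$ vertices and $|\Ei|+|\Ep|$ edges, and changes nothing else. Assume $\cG$ holds, so that $B$, $E$, $P_0$, $P$, $\Ei$ and $\Ep$ are all defined; by Remark~\ref{Rreadoff}, the event $\cG$ and the sets $B$, $P$ are functions of $(H_1,H_2^-)$. Let $C_1$ be the largest component of $H_1$, which has $L_1(H_1)\ge b$ vertices since $\cG_1$ holds, and let $C^-$ be the component of $H^-$ containing $B$; as $B\subseteq C_1$ and $C_1$ is a connected subhypergraph of $H^-$, we get $C_1\subseteq C^-$. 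I would then set $L$ to be the number of vertices of $C^-$ and $M$ the number of edges of $H^-$ lying inside $C^-$ (and $L=M=0$ when $\cG$ fails), so that $L$ and $M$ are indeed functions of $(H_1,H_2^-)$.

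The proof then rests on three elementary observations. First, every $v\in P$ is isolated in $H^-$: being peripheral, $v$ lies in no red edge and in at most one blue edge, and such an edge (if present) is peripheral, hence lies in $\Ep$ and is absent from $H_2^-$. Second, the edge set of $H$ is the disjoint union of the edge set of $H^-$, the set $\Ei$ (all of whose members are $r$-subsets of $B$), and the set $\Ep$ (none of whose members is an $r$-subset of $B$), and moreover distinct members of $\Ep$ meet $P$ in distinct vertices, since no vertex of $P$ lies in two edges of $H$. Third, since $d_1=\sqrt d\to\infty$ we have $s_{1,1}=100ne^{-d_1}=o(n)$, so $b\ge n-s_{1,1}>n/2$ for $n$ large, whence $|C^-|\ge|C_1|\ge b>n/2$, while any component of $H^-$ other than $C^-$ is disjoint from $C_1$ and so has at most $n-|C_1|\le n-b\le s_{1,1}$ vertices. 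Putting these together: adding $\Ei$ to $H^-$ merely inserts its $|\Ei|$ edges, all inside $B\subseteq C^-$; then adding each edge of $\Ep$ pulls its unique $P$-vertex --- isolated by the first observation, and untouched by $\Ei$ --- into the component of $B$ and does nothing else, so by the distinctness in the second observation exactly $|\Ep|$ vertices and $|\Ep|$ edges are added there. Hence the component of $H$ containing $B$ has $L+|\Ep|$ vertices and $M+|\Ei|+|\Ep|$ edges, whereas every other component of $H$ is a component of $H^-$ distinct from $C^-$ that no peripheral edge absorbed, and so has at most $s_{1,1}<n/2<|C^-|$ vertices for $n$ large. Therefore the component of $H$ containing $B$ is strictly the largest, giving $L_1=L+|\Ep|$ and $M_1=M+|\Ei|+|\Ep|$.

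The step that needs the most care --- bookkeeping rather than a genuine obstacle --- is verifying that nothing outside $\Ei\cup\Ep$ is disturbed. Concretely, this is exactly the content of the three observations, used in the stated order: $C^-$ already swallows all of $C_1$ (so the leftover components of $H^-$ stay below $n/2$ in size and are disjoint from the action), the $P$-vertices absorbed by peripheral edges carry no edges of $H^-$ with them, and two peripheral edges are never spent on the same $P$-vertex. Once these are recorded, the two identities follow with no analytic input beyond $s_{1,1}=o(n)$.
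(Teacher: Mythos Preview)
Your proof is correct and follows essentially the same approach as the paper's: define $C^-$ as the component of $H^-$ containing $B$, set $L=|C^-|$ and $M=e(C^-)$, note that $|C^-|\ge b>n/2$ forces $C^-$ to sit inside the largest component of $H$, and then observe that the edges in $\Ei$ lie inside $C^-$ while those in $\Ep$ each attach a distinct isolated vertex of $H^-$ to $C^-$. Your version is simply more explicit about the bookkeeping (disjointness of $\Ei$ and $\Ep$, distinctness of the $P$-vertices hit by peripheral edges, and why the other components of $H$ stay small), but the argument is the same.
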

\begin{proof}
Recall that $H=H_1\cup H_2=H_1\cup H_2^-\cup \Ep\cup \Ei$.
When $\cG$ holds, then $B$ is a set of $b$ vertices all within the same component
of $H_1$. Let $C^-$ be the component of $H^-=H_1\cup H_2^-$ containing this component.
Then $|C^-|\ge b>n/2$ if $n$ is large enough, so $C^-$ is certainly a subset of the largest
component $C$ of $H=H_1\cup H_2$. Furthermore, each edge of $\Ei$ lies entirely within $C^-$,
and each edge in $\Ep$ connects a distinct isolated vertex of $H^-$ to $C^-$, so we have
$|C|=|C^-|+|\Ep|$ and $e(C)=e(C^-)+|\Ep|+|\Ei|$.
\end{proof}

Since $b=n-s_{1,1}=n(1-O(e^{-d_1}))$, we have
\begin{equation}\label{p2br}
 p_2\binom{b}{r-1} = p_2\frac{n^{r-1}}{(r-1)!}\bb{1+O(s_{1,1}/n)} = d_2 (1+O(e^{-d_1})).
\end{equation}
This estimate will be useful in the proofs of the next two lemmas.

Recall from Remark~\ref{Rreadoff} that knowing the reduced graph $(H_1,H_2^-)$ 
determines $B$, $E$, $P_0$ and $P$, and hence also whether or not $\cG$ holds.

\begin{lemma}\label{cdist}
Whenever $\cG$ holds, the conditional distributions of $|\Ei|$ and $|\Ep|$ given $(H_1,H_2^-)$
are independent, with 
\[
 |\Ep|\sim \Bi(i,\pi) \quad\hbox{and}\quad  |\Ei|\sim \Bi(\MM,p_2),
\]
where
\begin{equation}\label{pidef}
 \pi = \frac{p_2 \binom{b}{r-1}}{p_2 \binom{b}{r-1} + 1-p_2} \sim \frac{d_2}{1+d_2}.
\end{equation}
\end{lemma}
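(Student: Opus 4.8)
The plan is to compute the conditional distribution of the blue edges that were deleted, namely those in $\Ei$ and those in $\Ep$, given the reduced pair $(H_1,H_2^-)$. The starting observation is that $H_2$ is obtained from $H_2^-$ by independently re-sampling exactly those potential blue edges that were eligible to be deleted: an edge $e\in E$ (internal type), or an edge consisting of one vertex $v\in P$ and $r-1$ vertices of $B$ (peripheral type). Crucially, by Remark~\ref{Rreadoff}, the sets $B$, $E$, $P_0$ and $P$ are all determined by $(H_1,H_2^-)$, so conditioning on $(H_1,H_2^-)$ fixes precisely which potential blue edges are ``in play''. Since the edges of $H_2=\Hrnptwo$ are independent, the conditional law of the in-play edges is just a product of $\Bernoulli(p_2)$ variables, conditioned on whatever constraint the definition of ``reduced'' imposes.

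Next I would separate the two types. For internal edges: every $e\in E$ is, by construction, a non-edge of $H_1$, and in $H_2^-$ all of $E$ has been deleted; there is no further constraint on whether such an $e$ is present in $H_2$, because the membership of $v\in P$ in the peripheral class only restricts blue edges containing $v$, and the edges of $E$ lie inside $B$, hence avoid $P$. So each of the $\MM=|E|$ potential internal edges is present independently with probability $p_2$, giving $|\Ei|\sim\Bi(\MM,p_2)$. For peripheral edges: fix $v\in P$. In the original $(H_1,H_2)$, the vertex $v$ is isolated in $H_1$ and lies in at most one blue edge, which (if it exists) must be of the form $\{v,b_1,\dots,b_{r-1}\}$ with the $b_j\in B$. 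Equivalently, among the potential blue edges through $v$, \emph{none} of the ``forbidden'' ones (those not of this special form) is present, and \emph{at most one} of the $\binom{b}{r-1}$ ``allowed'' ones is present. Conditioning a family of independent $\Bernoulli(p_2)$ variables on having at most one success among the $\binom{b}{r-1}$ allowed edges and zero successes among the forbidden ones makes the number of peripheral edges at $v$ a $\Bernoulli(\pi)$ with
\[
 \pi=\frac{\binom{b}{r-1}p_2(1-p_2)^{\binom{b}{r-1}-1}}{(1-p_2)^{\binom{b}{r-1}}+\binom{b}{r-1}p_2(1-p_2)^{\binom{b}{r-1}-1}}
 =\frac{p_2\binom{b}{r-1}}{p_2\binom{b}{r-1}+1-p_2},
\]
matching \eqref{pidef}; and since $H_2^-$ already has $v$ isolated, the variable ``number of peripheral edges at $v$'' is exactly $|\Ep|$ restricted to $v$. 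Different $v\in P$ involve disjoint sets of potential blue edges (two such special edges through distinct $v,v'\in P$ cannot coincide, as each determines its unique non-$B$ vertex), so these $|P|=i$ indicator variables are independent, giving $|\Ep|\sim\Bi(i,\pi)$. Independence of $|\Ei|$ and $|\Ep|$ follows because the internal potential edges (inside $B$) and the peripheral potential edges (each meeting $P$, which is disjoint from $B$) form disjoint families of independent blue edges.

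The one slightly delicate point — and the main thing to get right rather than a real obstacle — is verifying that conditioning on $(H_1,H_2^-)$ really imposes \emph{no} constraint beyond ``these in-play edges were deleted,'' i.e.\ that knowing $(H_1,H_2^-)$ does not leak information about the in-play edges through the very definitions of $P_0$ and $P$. This is exactly the content of Remark~\ref{Rreadoff}: deleting internal or peripheral blue edges does not change the peripheral-vertex set, so $P_0(H_1,H_2)=P_0(H_1,H_2^-)$ and hence $P$ is a deterministic function of the conditioning data. Therefore the conditional law of $H_2$ given $(H_1,H_2^-)$ is the law of $H_2^-$ with the in-play edges re-randomised independently as $\Bernoulli(p_2)$'s, subject only to the constraint defining ``reduced'', and the computation above applies. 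Finally, the asymptotics $\pi\sim d_2/(1+d_2)$ and $\MM\sim\binom{n}{r}$ are immediate from \eqref{p2br}, \eqref{Mdef} and $p_2=o(1)$.
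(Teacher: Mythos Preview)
Your argument is correct and follows essentially the same route as the paper: identify from Remark~\ref{Rreadoff} that $B,E,P_0,P$ are functions of $(H_1,H_2^-)$, observe that the conditional law of $H_2$ given $(H_1,H_2^-)$ is obtained from $H_2^-$ by independently re-inserting the ``in-play'' edges subject to the at-most-one constraint at each $v\in P$, and read off the two binomial distributions. The only cosmetic difference is that the paper enumerates the possible $H_2$'s directly and weights by $(p_2/(1-p_2))^{e(H_2)}$, whereas you phrase it as conditioning independent $\mathrm{Bernoulli}(p_2)$'s on the relevant constraint; these are the same computation.
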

\begin{proof}
Given $(H_1,H_2^-)$ it is easy to identify the possible values of the pair $(\Ep,\Ei)$.
Indeed, as noted above the pair $(H_1,H_2^-)$ determines $B$, $E$ and hence $P_0$ and $P$.
When $\cG$ holds, then $|B|=b$, $|E|=\MM$, $|P|=i$,
and $P$ and $B$ are disjoint.
Now $\Ei$ must be a subset of $E$, and any subset is possible.
A peripheral blue edge must consist of a vertex of $P$ and $r-1$ vertices of $B$, and any set of such edges
including each vertex of $P$ at most once is a possibility for $\Ep$. In other words, to
obtain a possible value of $H_2$, given $(H_1,H_2^-)$, starting from $H_2^-$ we must

(a) for each edge $e\in E$, either add it or not, and 

(b) for each vertex $v\in P$, either add one of the $\binom{b}{r-1}$ edges consisting of $v$ and $r-1$
vertices from $B$ or not.

Since all combinations are possible, and the probability of a possible value $H$ of $H_2$ is proportional to $p_2/(1-p_2)$
to the power of the number of edges, we see that in describing the conditional distribution of $H_2$ given $(H_1,H_2^-)$,
all these choices are independent. Furthermore, in (a) each edge is included with probability $p_2$, and in (b)
the probability of including an edge is $\pi$ as defined above. This implies the claimed formulae,
with the asymptotic estimate for $\pi$ following from \eqref{p2br}.
\end{proof}

Our `smoothing random variables' will be essentially $|\Ep|$ and $|\Ei|$. For later, it will be convenient
to `cook' these random variables when $\cG$ does not hold, so that they \emph{always} have the conditional
distribution defined above. More precisely, let
\[
  X'\sim \Bi(i,\pi) \quad\hbox{and}\quad  Y'\sim \Bi(\MM,p_2)
\]
be independent of each other and of $(H_1,H_2^-)$.
Set
\begin{equation}\label{XYdef}
 X = \ind{\cG} |\Ep| + \ind{\cG^\cc} X' \quad\hbox{and}\quad 
 Y = \ind{\cG} |\Ei| + \ind{\cG^\cc} Y',
\end{equation}
where $\ind{A}$ denotes the indicator function of an event $A$.
The only properties of $(X,Y)$ we shall need are the following.
\begin{lemma}\label{XY}
The random variables $(H_1,H_2^-)$ and $(X,Y)$ are independent, with $(X,Y)$ having the distribution
of a pair of independent binomial random variables $\Bi(i,\pi)$ and $\Bi(\MM,p_2)$.
Moreover, if $n$ is large enough, then whenever $\cG$ holds we have
\[
 L_1 = L + X\hbox{\quad and\quad}M_1=M+X+Y
\]
for some quantities $L$, $M$ that are functions of $(H_1,H_2^-)$.
\end{lemma}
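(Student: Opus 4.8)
The plan is to deduce both statements essentially immediately from Lemmas~\ref{adds} and~\ref{cdist} together with the definition~\eqref{XYdef}, the only real work being to organise the conditioning correctly. The second statement needs nothing new: by~\eqref{XYdef}, on the event $\cG$ we have $X=|\Ep|$ and $Y=|\Ei|$, so Lemma~\ref{adds} gives $L_1=L+|\Ep|=L+X$ and $M_1=M+|\Ep|+|\Ei|=M+X+Y$, with $L,M$ the functions of $(H_1,H_2^-)$ produced there. So everything reduces to the distributional claim that $(X,Y)$ is independent of $(H_1,H_2^-)$ with the stated law.

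For that I would argue by conditioning on $(H_1,H_2^-)$. Recall from Remark~\ref{Rreadoff} that $B$, $E$, $P$, and hence the event $\cG$, are functions of $(H_1,H_2^-)$, and that on $\cG$ the sizes $|B|=b$, $|E|=\MM$, $|P|=i$ are \emph{deterministic}, so that the binomial parameters $i,\pi,\MM,p_2$ appearing in Lemma~\ref{cdist} do not depend on the realised value of $(H_1,H_2^-)$. Write $c(x,y)=\Pr(\Bi(i,\pi)=x)\Pr(\Bi(\MM,p_2)=y)$ for the point probabilities of the target law. Fix a bounded function $f$ on the (finite) set of possible pairs $(H_1,H_2^-)$ and a pair $(x,y)\in\ZZ^2$, and split $\E[f(H_1,H_2^-)\ind{X=x,Y=y}]$ according to whether $\cG$ holds. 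On $\cG$ we have $X=|\Ep|$, $Y=|\Ei|$; conditioning on $(H_1,H_2^-)$ and applying Lemma~\ref{cdist} (adjoining the independent pair $(X',Y')$ does not change the conditional law of $(|\Ep|,|\Ei|)$ given $(H_1,H_2^-)$), the inner conditional probability equals the constant $c(x,y)$, so this contribution is $c(x,y)\,\E[f(H_1,H_2^-)\ind{\cG}]$. On $\cG^\cc$ we have $X=X'$, $Y=Y'$, and since $(X',Y')$ is independent of $(H_1,H_2^-)$ (and $\cG^\cc$ is $(H_1,H_2^-)$-measurable) this contribution is $c(x,y)\,\E[f(H_1,H_2^-)\ind{\cG^\cc}]$. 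Adding the two gives $\E[f(H_1,H_2^-)\ind{X=x,Y=y}]=c(x,y)\,\E[f(H_1,H_2^-)]$, and since $f$ and $(x,y)$ were arbitrary this is exactly the assertion that $(X,Y)$ is independent of $(H_1,H_2^-)$ and distributed as a pair of independent $\Bi(i,\pi)$ and $\Bi(\MM,p_2)$ variables.

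I do not expect a genuine obstacle: the lemma is a bookkeeping statement whose point is to repackage Lemmas~\ref{adds} and~\ref{cdist} into a clean form for the smoothing argument. The only things that need care are (i) that on $\cG$ the conditional binomial parameters are deterministic in $n$ — which is precisely why $b$, $\MM$ and $i$ were fixed in advance (see~\eqref{Mdef},~\eqref{idef}) rather than allowed to fluctuate — and (ii) that $\cG$ is measurable with respect to $(H_1,H_2^-)$, established in Remark~\ref{Rreadoff}, which is what permits pulling $\ind{\cG}$ and $\ind{\cG^\cc}$ outside the conditional expectations. The ``cooking'' in~\eqref{XYdef} is the device that makes point (i) irrelevant off $\cG$, since there $(X,Y)$ is simply replaced by the independent copy $(X',Y')$ with the same law.
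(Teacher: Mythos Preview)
Your proposal is correct and follows essentially the same approach as the paper: both deduce the second statement directly from Lemma~\ref{adds} and~\eqref{XYdef}, and the first by observing that the conditional law of $(X,Y)$ given $(H_1,H_2^-)$ is always $\Bi(i,\pi)\times\Bi(\MM,p_2)$, using Lemma~\ref{cdist} on $\cG$ and the independent copy $(X',Y')$ off $\cG$. The paper's proof is simply the two-sentence terse version of your argument; your explicit splitting and remarks about measurability of $\cG$ and determinism of the binomial parameters spell out exactly what the paper glosses over.
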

\begin{proof}
To prove the first statement we must show exactly that the conditional distribution
of $(X,Y)$ given $(H_1,H_2^-)$ is always that of the given independent binomial
distributions. This follows immediately from the definition \eqref{XYdef} of $X$ and $Y$
and Lemma~\ref{cdist} (applied only when $\cG$ holds).
The second statement follows immediately from \eqref{XYdef} and Lemma~\ref{adds}.
\end{proof}

We have already shown that the events $\cG_1$ and $\cG_2$ are very likely to hold.
We now show the same for the event $\cG_3=\{P_0\ge i\}$ that there are at least
$i=\ceil{(1-\gamma) n d_2 e^{-d}}$ peripheral vertices.

\begin{lemma}\label{lcG3}
We have $\Pr(\cG_3) = 1-o(n^{-100})$.
\end{lemma}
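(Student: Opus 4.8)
The plan is to reveal $H_1$ together with a sparse family of ``awkward'' edges of $H_2$, and then show that the remaining randomness of $H_2$ produces more than $i$ peripheral vertices, using only elementary large-deviation bounds plus one Talagrand-type estimate. First I would condition on $\cG_1=\{L_1(H_1)\ge b\}$, which by \eqref{cG1} fails with probability $o(n^{-100})$; on $\cG_1$ the set $B$ is a fixed $b$-element subset of the giant component of $H_1$, and $U:=[n]\setminus B$ has exactly $s_{1,1}$ vertices. Let $B'$ be the set of vertices isolated in $H_1$; isolated vertices avoid the giant, so $B'\subseteq U$. For $v\in B'$ there are no red edges at $v$, so $v$ is peripheral precisely when in $H_2$ it lies in at most one edge and that edge (if any) is ``$B$-type'', i.e.\ of the form $\{v\}\cup S$ with $S\subseteq B$, $|S|=r-1$. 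Since $v\in U$, an $H_2$-edge through $v$ is either $B$-type ($v$ is then its only vertex in $U$) or has at least two vertices in $U$ (call such an edge \emph{bad}). Hence, for $v\in B'$, we have $v\in P_0$ exactly when $v$ is in no bad $H_2$-edge and in at most one $B$-type $H_2$-edge.

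Next I would reveal, in addition to $H_1$, all bad edges of $H_2$, and call $v\in B'$ \emph{clean} if it lies in no bad $H_2$-edge; whether $v$ is clean is determined by the revealed data, as is the number $N$ of clean vertices of $B'$. The still-hidden edges relevant to the status of a clean $v$ are the $B$-type edges at $v$; for distinct vertices these families are disjoint (each such edge contains its own unique $U$-vertex), and each edge is still present independently with probability $p_2$. Consequently, conditionally on the revealed data, the events $\{v\in P_0\}$ over clean $v\in B'$ are independent, each of probability $q_2=\Pr\bigl(\Bi(\binom{b}{r-1},p_2)\le 1\bigr)=(1-p_2)^{\binom{b}{r-1}}\bigl(1+p_2\binom{b}{r-1}/(1-p_2)\bigr)$. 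Using \eqref{p2br}, \eqref{d1d2} and \eqref{dsum} one checks that $q_2=d_2e^{-d_2}(1+o(\gamma))$ and $e^{-d_1-d_2}=e^{-d}(1+o(\gamma))$, so $q_2\,ne^{-d_1}=nd_2e^{-d}(1+o(\gamma))$.

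The proof then follows from two estimates. Given $H_1$ with $\cG_1$, the number of bad $H_2$-edges is binomial with mean $O(s_{1,1}^2d/n)=O(nde^{-2d_1})$, so a Chernoff bound makes it $O(nde^{-2d_1})$ except with probability $\exp(-\Omega(nde^{-2d_1}))=o(n^{-100})$, using $d-2d_1\to\infty$ and \eqref{ndded}; as each bad edge spoils at most $r$ vertices, this removes only $O(nde^{-2d_1})=o(\gamma ne^{-d_1})$ vertices of $B'$. Secondly — the one nontrivial point — the number $|B'|$ of isolated vertices of $H_1$ satisfies $|B'|\ge(1-\gamma/4)ne^{-d_1}$ with probability $1-o(n^{-100})$. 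Granting both, with probability $1-o(n^{-100})$ we get $N\ge(1-\gamma/3)ne^{-d_1}$, so the conditional mean of $|P_0|$ is $Nq_2\ge(1-\gamma/2)nd_2e^{-d}$, which exceeds $i=\ceil{(1-\gamma)nd_2e^{-d}}$ by $\Omega(\gamma nd_2e^{-d})$ (the rounding is negligible since $\gamma nde^{-d}\to\infty$ by \eqref{gammarate}). A Chernoff lower-tail bound for the sum of $N$ independent $\Bi(1,q_2)$ variables then gives $\Pr(|P_0|<i\mid\text{revealed data})\le\exp(-\Omega(\gamma^2nd_2e^{-d}))=o(n^{-100})$, as $\gamma^2nde^{-d}/\log n\to\infty$. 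A union bound over the bad events ($\cG_1^\cc$, too many bad edges, $|B'|$ too small, and the Chernoff failure) completes the proof.

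I expect the main obstacle to be the lower bound on $|B'|$. Here $\E[|B'|]\sim ne^{-d_1}=n^{1-o(1)}\to\infty$, but $|B'|$ is only weakly concentrated: $\Var[|B'|]\sim\E[|B'|]$, and in the lower-tail form of Janson's inequality the relevant $\Delta$-parameter is of order $\E[|B'|]^2$, so both Chebyshev and Janson give error far larger than $n^{-100}$. The fix is Talagrand's inequality applied to the number of \emph{non}-isolated vertices of $H_1$: this function is $r$-Lipschitz and is certifiable with certificate size equal to its value (one present edge per non-isolated vertex), hence concentrated on scale $\sqrt n$ up to a polylogarithmic factor with error $o(n^{-C})$ for every fixed $C$; since $\gamma ne^{-d_1}=\gamma n^{1-o(1)}$ dominates that scale, this yields the bound. (A direct high-moment estimate for $|B'|-\E[|B'|]$ would also work.) Everything else reduces to Chernoff/Markov estimates and routine manipulation of \eqref{d1d2}, \eqref{dsum} and \eqref{gammarate}.
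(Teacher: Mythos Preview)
Your proof is correct and follows essentially the same route as the paper: condition on $H_1$ (assuming $\cG_1$), bound the number of isolated vertices $|B'|=|I_1|$ from below, discard those hit by ``bad''/``annoying'' blue edges via a Chernoff bound, and then use the independence of the $B$-type edge families at the remaining vertices for a final Chernoff estimate. The only genuine difference is in the concentration step for $|I_1|$: the paper uses a Hoeffding--Azuma argument (edge-exposure martingale in the uniform model, noting that each edge changes $|I_1|$ by at most $r$ and there are $O(n\log n)$ edges, giving a deviation of $n^{0.51}$ with probability $o(n^{-100})$), whereas you use Talagrand on the certifiable $r$-Lipschitz quantity $n-|I_1|$; both give the required $n^{1/2+o(1)}$ scale of concentration, well below the needed $\gamma n e^{-d_1}=\gamma n^{1-o(1)}$ slack.
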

\begin{proof}
The probability that a given vertex is isolated in $H_1$ is $(1-p_1)^{\binom{n-1}{r-1}}$.
Recalling that $p_1=O(n^{-r+1}\log n)$,
since $\binom{n-1}{r-1}=n^{r-1}/(r-1)! + O(n^{r-2})$ and $p_1^2 n^{r-1}=O(n^{-r+1}(\log n)^2)=O((\log n)^2/n)$,
we have (crudely)
\begin{multline*}
 (1-p_1)^{\binom{n-1}{r-1}} = \exp\bb{-p_1\tbinom{n-1}{r-1} +O((\log n)^2/n)} \\
 = \exp\bb{-p_1\tfrac{n^{r-1}}{(r-1)!} +O((\log n)^2/n)} = e^{-d_1+O((\log n)^2/n)} = e^{-d_1}(1+o(\gamma)),
\end{multline*}
since $\gamma n/(\log n)^2\ge \gamma d\to\infty$, from \eqref{gammarate}.
Let $I_1$ be the set of isolated vertices of $H_1$. Then $\E[|I_1|]=ne^{-d_1}(1+o(\gamma))$.
Since $\E[e(H_1)] = n d_1/r \le n\log n$, say, it is not hard to see that
\[
 \Pr\bb{ |I_1| \le \E[|I_1|] - n^{0.51} } = o(n^{-100}),
\]
say. For example, one approach is to consider a variant of the model $H_1=\Hrnpone$ in which we add
uniformly random edges one-by-one, and apply the Hoeffding--Azuma inequality in this model;
we omit the details. Recalling \eqref{nd1}, we see that with probability $1-o(n^{-100})$
we have
\begin{equation}\label{I1bd}
 |I_1| \ge (1-n^{-1/3}) \E[|I_1|] \ge n e^{-d_1} (1-o(\gamma)),
\end{equation}
say.

For the rest of the proof we condition on $H_1$, which determines $I_1$ and $B$; we assume, as we may,
that the event $\cG_1$ and inequality \eqref{I1bd} hold. 
Since $\cG_1$ holds, the sets $I_1$ and $B$ are disjoint.
Call a possible (blue) edge \emph{acceptable} if it consists of one vertex of $I_1$ and $r-1$ vertices
of $B$. We call any other possible edge meeting $I_1$ \emph{annoying}. It remains to show that
with very high probability there are at least $i$ vertices $v\in I_1$ such that 
$v$ is in no annoying blue edges and $v$ is in at most one acceptable blue edge.
Indeed, when $\cG_1$ holds, then $P_0$ consists precisely of the set of such $v\in I_1$.

We first consider annoying edges. There are at most
\[
 N = |I_1|(n-|B|)n^{r-2} = |I_1| s_{1,1} n^{r-2}
\]
possible edges that are annoying. Each is present in $H_2$ independently with probability $p_2$.
By \eqref{d1d2}, $d_2e^{-d_1}\le de^{-d_1}\to 0$, so if $n$ is large enough we have
\[
 Np_2 = |I_1| 100 n e^{-d_1} n^{r-2} d_2 \frac{(r-1)!}{n^{r-1}} = 100 (r-1)! d_2 e^{-d_1} |I_1| \le |I_1|/(2d).
\]
Recalling \eqref{nd1} and \eqref{I1bd}, we have $|I_1|/d\ge n e^{-d_1}/(2d) = n^{1-o(1)}$.
It follows by a Chernoff bound that with probability at least $1-\exp(-\Omega(|I_1|/d)) = o(n^{-100})$
the actual number of annoying blue edges is at most $|I_1|/d$.

We condition on the set of annoying edges present in $H_2$, assuming there are at most $|I_1|/d$ of them.

Let $I_1'\subseteq I_1$ be the set of vertices of $I_1$ not in any annoying blue edges, so
\begin{equation}\label{I1'bd}
 |I_1'| \ge |I_1| - r|I_1|/d = |I_1|(1-r/d) \ge n e^{-d_1} ( 1-o(\gamma)).
\end{equation}
A vertex $v\in I_1'$ is in $P_0$ if and only if it is in at most one acceptable blue edge.
Noting that we have not yet tested the acceptable edges for their presence in $H_2$,
for a given vertex $v\in I_1'$, this event has probability
\[
 \pi= (1-p_2)^{\binom{b}{r-1}} + \tbinom{b}{r-1}p_2(1-p_2)^{\binom{b}{r-1}-1} \ge
 \tbinom{b}{r-1}p_2(1-p_2)^{\binom{b}{r-1}}.
\]
Recall from \eqref{p2br} that $p_2\binom{b}{r-1} = d_2 (1+O(e^{-d_1}))$.
Since $p_2^2n^{r-1}=O(n^{-r+1}(\log n)^2)$, it easily follows that
\begin{equation}\label{pibd}
 \pi \ge d_2 e^{-d_2} (1-o(\gamma)).
\end{equation}
Since the sets of potential acceptable edges meeting different vertices in $I_1'$ are disjoint,
the events $v\in P_0$ are (conditionally) independent for different $v\in I_1'$, so the conditional distribution
of $|P_0|$ is binomial $\Bi(|I_1'|,\pi)$.
Now by \eqref{I1'bd}, \eqref{pibd} and \eqref{dsum} we have
\[
 |I_1'|\pi \ge n e^{-d_1} d_2 e^{-d_2} (1-o(\gamma)) \ge n d_2 e^{-d} (1-o(\gamma)).
\]
Since $\gamma^2 nd_2e^{-d}\sim \gamma^2 nd e^{-d}$ is much larger than $\log n$ by \eqref{gammarate},
a Chernoff bound shows that with (conditional) probability at least
$1-\exp(-\Omega(\gamma^2 |I_1'|\pi))=1-o(n^{-100})$ we have
\[
 |P_0|\ge (1-\gamma/2) |I_1'|\pi \ge (1-\gamma) n d_2 e^{-d},
\]
for $n$ large enough. Thus $\Pr(\cG^3)\ge 1-o(n^{-100})$, as claimed.
\end{proof}

\begin{corollary}\label{ccG}
We have $\Pr(\cG)=1-o(n^{-100})$, where $\cG=\cG_1\cap\cG_2\cap\cG_3$.
\end{corollary}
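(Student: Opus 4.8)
The plan is simply to apply the union bound to the three ``bad'' events. Recall that $\cG = \cG_1 \cap \cG_2 \cap \cG_3$, and that each of the relevant one-sided bounds has already been established: $\Pr(\cG_1^\cc) = o(n^{-100})$ by \eqref{cG1} (which is Lemma~\ref{nocut} applied with $d_1$ in place of $d$, using $e^{-d_1} = n^{-o(1)}$ so that the hypothesis $\log n - d_1 \to\infty$ holds); $\Pr(\cG_2^\cc) = o(n^{-100})$ by \eqref{cG2} (a Chernoff-type tail bound for the binomial random variable $e(H_1)$, whose mean $p_1\binom{n}{r} = O(n\log n)$ lies far below the threshold $n^{3/2}$); and $\Pr(\cG_3^\cc) = o(n^{-100})$ by Lemma~\ref{lcG3}.

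Adding these three estimates gives
\[
 \Pr(\cG^\cc) \le \Pr(\cG_1^\cc) + \Pr(\cG_2^\cc) + \Pr(\cG_3^\cc) = o(n^{-100}),
\]
and hence $\Pr(\cG) = 1 - o(n^{-100})$, as claimed.

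There is no genuine obstacle here: the content of the corollary lies entirely in the preceding lemmas (in particular Lemma~\ref{lcG3}, whose proof occupies the bulk of this section), and the corollary itself is just a one-line union bound, recorded separately for convenient reference in Section~\ref{sec_pf}.
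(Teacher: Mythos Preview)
Your proof is correct and matches the paper's own argument exactly: the paper simply writes ``Immediate from \eqref{cG1}, \eqref{cG2} and Lemma~\ref{lcG3},'' which is precisely the union bound you spell out.
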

\begin{proof}
Immediate from \eqref{cG1}, \eqref{cG2} and Lemma~\ref{lcG3}.
\end{proof}

Before turning to the proof of Theorem~\ref{thLLT} we give a simple observation about
local limit theorems.  Recall that a sequence 
$((X_n,Y_n))$ of $\ZZ^2$-valued random variables satisfies a local limit
theorem (an LLT) with parameters $(\mu_X(n),\mu_Y(n))$ and $(\sigma_X^2(n),\sigma_Y^2(n))$ if, suppressing
the dependence on $n$, we have
\[ 
 \sup_{(x,y)\in \ZZ^2} \left| \Pr(X_n=x,Y_n=y)  - f_n(x-\mu_X,y-\mu_Y) \right| = o(1/(\sigma_X\sigma_Y)),
\]
where
\begin{equation}\label{fndef}
 f_n(x,y) = \frac{1}{2\pi\sigma_X\sigma_Y} \exp\left(-\frac{x^2}{2\sigma_X^2} -\frac{y^2}{2\sigma_Y^2} \right).
\end{equation}
We make some simple observations about such results.
\begin{lemma}\label{parshift}
If $(X_n,Y_n)$ satisfies an LLT with parameters $(\mu_X(n),\mu_Y(n))$ and $(\sigma_X^2(n),\sigma_Y^2(n))$,
then it also satisfies an LLT with parameters $(\tmu_X(n),\tmu_Y(n))$ and $(\tsigma_X^2(n),\tsigma_Y^2(n))$
whenever $\tmu_X(n)=\mu_X(n)+o(\sigma_X(n))$, $\tmu_Y(n)=\mu_Y(n)+o(\sigma_Y(n))$,
$\tsigma_X(n)\sim \sigma_X(n)$ and $\tsigma_Y(n)\sim\sigma_Y(n)$.
\end{lemma}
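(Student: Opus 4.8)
The plan is to reduce the statement to a purely analytic fact: two bivariate Gaussian densities whose means differ by $o(\sigma)$ and whose variances are asymptotically equal are uniformly close on $\ZZ^2$ (indeed on $\RR^2$), on the scale $1/(\sigma_X\sigma_Y)$. Granting this, the lemma follows by the triangle inequality. Write $f_n$ for the density of \eqref{fndef} built from $(\mu_X,\mu_Y)$ and $(\sigma_X^2,\sigma_Y^2)$, and $\tilde f_n$ for the analogous density built from $(\tmu_X,\tmu_Y)$ and $(\tsigma_X^2,\tsigma_Y^2)$. Since $\tsigma_X\sim\sigma_X$ and $\tsigma_Y\sim\sigma_Y$, a sequence is $o(1/(\tsigma_X\tsigma_Y))$ if and only if it is $o(1/(\sigma_X\sigma_Y))$, so in view of the hypothesised LLT it is enough to prove
\[ \sup_{(x,y)\in\ZZ^2} \bigl| f_n(x-\mu_X,\,y-\mu_Y) - \tilde f_n(x-\tmu_X,\,y-\tmu_Y) \bigr| = o\bigl(1/(\sigma_X\sigma_Y)\bigr) . \]

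Next I would rescale to standard coordinates (and treat the larger index set $\RR^2$, which dominates $\ZZ^2$). For $(x,y)\in\RR^2$ put $a=(x-\mu_X)/\sigma_X$ and $b=(y-\mu_Y)/\sigma_Y$, so that $\sigma_X\sigma_Y f_n(x-\mu_X,y-\mu_Y)=g(a,b)$, where $g(a,b)=(2\pi)^{-1}e^{-(a^2+b^2)/2}$ is the standard bivariate normal density. Writing the mean shifts as $\mu_X-\tmu_X=\eps_X\sigma_X$ and $\mu_Y-\tmu_Y=\eps_Y\sigma_Y$ with $\eps_X,\eps_Y\to0$, and setting $\alpha_X=\sigma_X^2/\tsigma_X^2\to1$, $\alpha_Y=\sigma_Y^2/\tsigma_Y^2\to1$ and $c_n=\sigma_X\sigma_Y/(\tsigma_X\tsigma_Y)\to1$, a short computation gives $\sigma_X\sigma_Y\,\tilde f_n(x-\tmu_X,y-\tmu_Y)=h_n(a,b)$, where $h_n(a,b)=\tfrac{c_n}{2\pi}\exp\bigl(-\alpha_X(a+\eps_X)^2/2-\alpha_Y(b+\eps_Y)^2/2\bigr)$. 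Multiplying the displayed target by $\sigma_X\sigma_Y$, it therefore suffices to show $\sup_{(a,b)\in\RR^2}|g(a,b)-h_n(a,b)|\to0$ as $n\to\infty$.

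This last uniform estimate is the heart of the matter, and I would prove it by the usual split into a compact region and its complement. Once $n$ is large enough that $\alpha_X,\alpha_Y\in[\tfrac12,2]$, $c_n\le2$ and $|\eps_X|,|\eps_Y|\le1$, a crude Gaussian tail bound yields $h_n(a,b)\le C\,e^{-(a^2+b^2)/8}$ for some universal constant $C$; hence, given $\eps>0$, one may fix $R$ with $g(a,b)+h_n(a,b)<\eps$ whenever $\max\{|a|,|b|\}>R$. On the compact square $[-R,R]^2$ one has $h_n\to g$ uniformly, since the map $(a,b,c,\alpha,\beta,s,t)\mapsto\tfrac{c}{2\pi}e^{-\alpha(a+s)^2/2-\beta(b+t)^2/2}$ is uniformly continuous on $[-R,R]^2$ times a bounded neighbourhood of $(1,1,1,0,0)$, and $(c_n,\alpha_X,\alpha_Y,\eps_X,\eps_Y)\to(1,1,1,0,0)$; so $\sup_{[-R,R]^2}|g-h_n|<\eps$ for all large $n$. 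Combining the two regions gives $\sup_{(a,b)\in\RR^2}|g-h_n|\le2\eps$ for all large $n$, and since $\eps$ was arbitrary the estimate follows. Finally, the triangle inequality
\[ \bigl|\Pr(X_n=x,Y_n=y)-\tilde f_n(x-\tmu_X,y-\tmu_Y)\bigr| \le \bigl|\Pr(X_n=x,Y_n=y)-f_n(x-\mu_X,y-\mu_Y)\bigr| + \bigl|f_n(x-\mu_X,y-\mu_Y)-\tilde f_n(x-\tmu_X,y-\tmu_Y)\bigr| , \]
applied uniformly in $(x,y)\in\ZZ^2$, bounds the left-hand side by $o(1/(\sigma_X\sigma_Y))+o(1/(\sigma_X\sigma_Y))=o(1/(\tsigma_X\tsigma_Y))$, which is exactly the LLT with the new parameters. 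The only genuinely delicate point is the uniformity over all of $\RR^2$ in the Gaussian comparison; everything else is bookkeeping, together with the observation that $\sigma_X\sigma_Y$ and $\tsigma_X\tsigma_Y$ are interchangeable inside any $o(\cdot)$.
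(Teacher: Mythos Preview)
Your proof is correct. The paper itself omits the proof entirely, writing only ``This is a standard result; we omit the proof which is just calculation.'' Your argument fills in exactly the calculation the authors had in mind: reduce via the triangle inequality to showing the two Gaussian densities differ by $o(1/(\sigma_X\sigma_Y))$ uniformly, rescale to standard coordinates, and handle the resulting uniform convergence $h_n\to g$ by a compact/tail split. The details you give (the bound $h_n(a,b)\le C e^{-(a^2+b^2)/8}$ for the tail, uniform continuity on compacta) are all sound, and the final observation that $o(1/(\sigma_X\sigma_Y))=o(1/(\tsigma_X\tsigma_Y))$ closes the loop cleanly.
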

\begin{proof}
This is a standard result; we omit the proof which is just calculation.
\end{proof}

\begin{lemma}\label{smalloff}
Suppose that $(X_n,Y_n)$ satisfies an LLT with parameters $(\mu_X(n),\mu_Y(n))$ and $(\sigma_X^2(n),\sigma_Y^2(n))$.
Suppose also that $(A_n,B_n)$ is independent of $(X_n,Y_n)$, and that
\[
 \Var[A_n] = o(\sigma_X^2)\hbox{\quad and\quad} \Var[B_n] = o(\sigma_Y^2).
\]
Then $(A_n+X_n,B_n+Y_n)$ satisfies an LLT with parameters
$(\E[A_n]+\mu_X(n),\E[B_n]+\mu_Y(n))$ and $(\sigma_X^2(n),\sigma_Y^2(n))$.
\end{lemma}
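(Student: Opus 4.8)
The plan is to condition on the value of $(A_n,B_n)$ and exploit that the bivariate Gaussian density $f_n$ of \eqref{fndef} varies slowly on the scales $\sigma_X,\sigma_Y$, while $A_n-\E[A_n]$ and $B_n-\E[B_n]$ concentrate on strictly smaller scales. Write $u=x-\E[A_n]-\mu_X$, $v=y-\E[B_n]-\mu_Y$, $\tilde A=A_n-\E[A_n]$ and $\tilde B=B_n-\E[B_n]$. By independence,
\[
 \Pr(A_n+X_n=x,\,B_n+Y_n=y)=\sum_{a,b\in\ZZ}\Pr(A_n=a,B_n=b)\,\Pr\bb{X_n=x-a,\,Y_n=y-b}.
\]
Apply the hypothesised LLT for $(X_n,Y_n)$ to each summand; since its error term is uniform in the argument and the weights $\Pr(A_n=a,B_n=b)$ sum to $1$, the right-hand side equals $\E\bb{f_n(u-\tilde A,\,v-\tilde B)}+o(1/(\sigma_X\sigma_Y))$, uniformly in $(x,y)$. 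It therefore suffices to prove the deterministic estimate
\[
 \E\bb{f_n(u-\tilde A,\,v-\tilde B)}=f_n(u,v)+o(1/(\sigma_X\sigma_Y))
\]
uniformly over $u,v\in\RR$.

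For this I would first note that $f_n$ factorises into one-dimensional Gaussian densities, whence, using $\sup_t|t|e^{-t^2/2}<\infty$, one gets the \emph{global} bounds $|\partial_1 f_n|\le C/(\sigma_X^2\sigma_Y)$ and $|\partial_2 f_n|\le C/(\sigma_X\sigma_Y^2)$ for an absolute constant $C$. By the mean value theorem, $f_n$ is Lipschitz in each coordinate with these constants, so
\[
 \bigl|f_n(u-a,v-b)-f_n(u,v)\bigr|\le\frac{C|a|}{\sigma_X^2\sigma_Y}+\frac{C|b|}{\sigma_X\sigma_Y^2}
\]
for all $a,b\in\RR$. Taking expectations and using $\E|\tilde A|\le\Var[A_n]^{1/2}=o(\sigma_X)$ and $\E|\tilde B|\le\Var[B_n]^{1/2}=o(\sigma_Y)$ (Cauchy--Schwarz together with the hypothesis) bounds the left-hand side of the deterministic estimate by $o(\sigma_X)\cdot C/(\sigma_X^2\sigma_Y)+o(\sigma_Y)\cdot C/(\sigma_X\sigma_Y^2)=o(1/(\sigma_X\sigma_Y))$. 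Combining the two displays above yields the LLT for $(A_n+X_n,B_n+Y_n)$ with mean parameters $(\E[A_n]+\mu_X,\E[B_n]+\mu_Y)$ and variance parameters $(\sigma_X^2,\sigma_Y^2)$, exactly as claimed.

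There is no genuine obstacle here; the argument is a routine convolution-smoothing estimate. The only point that needs a little care is that all the error terms must be uniform in $(x,y)$: for the LLT input this is built into the definition, and for the Gaussian step it is crucial that the Lipschitz bounds on $f_n$ are global, so that no truncation of $(A_n,B_n)$ is required even if these variables have heavy tails — the finite first absolute moment supplied by the variance hypothesis is all one needs. If desired, the resulting parameters could afterwards be adjusted cosmetically via Lemma~\ref{parshift}, but this is unnecessary.
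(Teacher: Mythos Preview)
Your proof is correct and follows essentially the same approach as the paper's: condition on $(A_n,B_n)$, apply the LLT for $(X_n,Y_n)$ uniformly, then use the global Lipschitz property of the Gaussian density together with $\E|\tilde A|\le\Var[A_n]^{1/2}=o(\sigma_X)$ and $\E|\tilde B|=o(\sigma_Y)$. You have simply made explicit the derivative bounds that the paper summarises as ``$\exp(-x^2/2)$ is Lipschitz''.
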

\begin{proof}
This is a simple consequence of the fact that $A_n$ and $B_n$ are concentrated on the relevant scales, namely
$\mu_X=\mu_X(n)$ and $\mu_Y=\mu_Y(n)$, together with the fact that $\exp(-x^2/2)$ is Lipschitz as a function
of $x$ (with constant $e^{-1/2}$). Indeed, we may write
\begin{multline*}
 \Pr(A_n+X_n=x,B_n+Y_n=y) = \E_{A_n,B_n}\bigl[\Pr(X_n=x-A_n,Y_n=y-B_n)\bigr] \\
 =  \E_{A_n,B_n}\bigl[f_n(x-A_n-\mu_X,y-B_n-\mu_Y)\bigr] + o(1/(\sigma_X\sigma_Y)),
\end{multline*}
where $f_n$ is defined as in \eqref{fndef} and in the second step we applied the LLT for $(X_n,Y_n)$.
Since $\E[|A_n-\E[A_n]|] \le \sqrt{\Var{A_n}} = o(\sigma_X)$ and similarly
$\E[|B_n-\E[B_n]|] = o(\sigma_Y)$, the result follows from the Lipschitz property of $e^{-x^2/2}$.
\end{proof}

As in Theorem~\ref{thLLT}, set
\[
 \sigma_L^2=\sigma_L^2(n) = n e^{-d} \hbox{\quad and\quad} \sigma_M^2 = \sigma_M^2(n) = \frac{dn}{r}.
\]
Most of the time we shall suppress the dependence on $n$ in the notation. The final
ingredient in our proof is that the `cooked' versions $X$ and $Y$
of the numbers $\Ep$ and $\Ei$ of peripheral and internal
blue edges satisfy an LLT.

\begin{lemma}\label{EpiLLT}
The random variables $X=X_n$ and $Y=Y_n$ defined in \eqref{XYdef} satisfy a local limit theorem with 
parameters $(\E[X],\E[Y])$ and $(\sigma_L^2,\sigma_M^2)$.
\end{lemma}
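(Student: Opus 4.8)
The plan is to reduce everything to the classical local limit theorem for a single binomial. By Lemma~\ref{XY} the pair $(X,Y)$ has \emph{exactly} the law of two independent binomials $X\sim\Bi(i,\pi)$ and $Y\sim\Bi(\MM,p_2)$, so there is nothing random left to analyse beyond these two one-dimensional distributions. A two-dimensional LLT for a pair of independent $\ZZ$-valued variables follows from one-dimensional LLTs for each coordinate: writing $\Pr(X=x)=g_X(x-\mu_X)+\eps_x$ and $\Pr(Y=y)=g_Y(y-\mu_Y)+\eps_y$ with $g_X,g_Y$ the relevant Gaussian densities (bounded by $1/(\sqrt{2\pi}\,\sigma_L)$ and $1/(\sqrt{2\pi}\,\sigma_M)$ respectively) and $\sup_x|\eps_x|=o(1/\sigma_L)$, $\sup_y|\eps_y|=o(1/\sigma_M)$, one multiplies to get $\Pr(X=x,Y=y)=g_X(x-\mu_X)g_Y(y-\mu_Y)+o(1/(\sigma_L\sigma_M))$ uniformly, since each of the three cross terms $g_X\eps_y$, $g_Y\eps_x$, $\eps_x\eps_y$ is $o(1/(\sigma_L\sigma_M))$, and $g_X(x-\mu_X)g_Y(y-\mu_Y)$ is precisely the two-dimensional density $f_n(x-\mu_X,y-\mu_Y)$. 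So it suffices to prove one-dimensional LLTs for $X$ and $Y$ with their own parameters $(\E[X],\Var[X])$ and $(\E[Y],\Var[Y])$, and then to check $\Var[X]\sim\sigma_L^2$ and $\Var[Y]\sim\sigma_M^2$, so that Lemma~\ref{parshift} upgrades the parameters to $(\sigma_L^2,\sigma_M^2)$.

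The variance estimates are routine. For $Y$, $\Var[Y]=\MM p_2(1-p_2)\sim\binom{n}{r}p_2$ by \eqref{Mdef} and $p_2\to0$; since $p_2=d_2(r-1)!\,n^{-r+1}$ and $d_2\sim d$ this is $\sim d_2 n/r\sim dn/r=\sigma_M^2$. For $X$, $\Var[X]=i\pi(1-\pi)$, and writing $A=p_2\binom{b}{r-1}=d_2(1+O(e^{-d_1}))$ by \eqref{p2br} we have $\pi=A/(A+1-p_2)$, so $1-\pi\sim1/A\sim1/d_2$ and hence $\pi(1-\pi)\sim1/d_2$; combined with $i\sim(1-\gamma)nd_2e^{-d}$ and $\gamma\to0$ this gives $\Var[X]\sim ne^{-d}=\sigma_L^2$. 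In particular both variances tend to infinity: $\Var[Y]\to\infty$ since $d,n\to\infty$, and $\Var[X]\sim\mu_0\to\infty$ by the standing hypothesis $\log n-d\to\infty$.

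With the variances growing, the one-dimensional LLTs follow from the classical statement that $Z\sim\Bi(N,q)$ with $Nq(1-q)\to\infty$ satisfies an LLT with parameters $(Nq,Nq(1-q))$. This applies directly to $Y\sim\Bi(\MM,p_2)$. For $X$, since $\pi\to1$ it is cleanest to apply the same fact to $i-X\sim\Bi(i,1-\pi)$, a binomial with vanishing success probability and mean $i(1-\pi)\sim ne^{-d}\to\infty$; this gives an LLT for $i-X$ and hence, by reflection, for $X$. Feeding these one-dimensional statements through the product argument of the first paragraph and then applying Lemma~\ref{parshift} (using $\sqrt{\Var[X]}\sim\sigma_L$ and $\sqrt{\Var[Y]}\sim\sigma_M$) yields the asserted LLT for $(X,Y)$ with parameters $(\E[X],\E[Y])$ and $(\sigma_L^2,\sigma_M^2)$.

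There is no real obstacle here; the only mild care points are having a form of the binomial LLT valid when the success probability tends to $0$ or $1$ rather than being fixed — a standard fact, which can alternatively be routed through a Poisson approximation together with the LLT for $\Po(\lambda)$ with $\lambda\to\infty$ — and verifying that the two uniform one-dimensional error terms combine to $o(1/(\sigma_L\sigma_M))$ in the product, which is the short computation indicated above.
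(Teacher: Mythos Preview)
Your proof is correct and follows essentially the same route as the paper: invoke Lemma~\ref{XY} to identify $X,Y$ as independent binomials, compute $\Var[X]\sim\sigma_L^2$ and $\Var[Y]\sim\sigma_M^2$, apply the univariate binomial LLT to each coordinate, combine by independence, and adjust parameters via Lemma~\ref{parshift}. You spell out two points the paper leaves implicit --- the product argument for passing from two univariate LLTs to a bivariate one, and the reflection $i-X\sim\Bi(i,1-\pi)$ to handle $\pi\to 1$ --- but these are elaborations of the same argument rather than a different approach.
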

\begin{proof}
By Lemma~\ref{XY}, $X$ and $Y$ are independent of each
other, and have binomial distributions $\Bi(i,\pi)$ and $\Bi(\MM,p_2)$.
Recalling \eqref{idef} and \eqref{pidef} we have
\begin{equation}\label{VEp}
 \Var[X] = i \pi (1-\pi) \sim i (1-\pi) \sim i/d_2 \sim n e^{-d} = \sigma_L^2.
\end{equation}
Also, from \eqref{Mdef},
\begin{equation}\label{VEi}
 \Var[Y] = \MM p_2 (1-p_2) \sim \MM p_2 \sim \frac{n^r}{r!} d_2 \frac{(r-1)!}{n^{r-1}} = \frac{d_2n}{r}
 \sim \frac{dn}{r} = \sigma_M^2.
\end{equation}
It is well known (and easy to verify, for example from the formula
for a binomial coefficient) that a sequence of binomial random variables $\Bi(N_n,p_n)$ with
$\sigma^2_n=N_np_n(1-p_n)\to\infty$ satisfies a univariate local limit theorem
with parameters $N_np_n$ and $\sigma^2_n$. This statement extends in an obvious way to a pair of independent
binomial random variables; this extension, and Lemma~\ref{parshift}, give the result.
\end{proof}

We now have all the pieces in place to prove our local limit theorem.
\begin{proof}[Proof of Theorem~\ref{thLLT}]
We have already proved the estimates \eqref{ebds} and \eqref{vbds} for the mean and variance
of $L_1=L_1(\Hrnp)$ and $M_1=M_1(\Hrnp)$ in Lemmas~\ref{EL1}, \ref{EM1} and~\ref{VM1}.
It remains to prove the local limit theorem.
Set
\[
 L_1^* = L + X \hbox{\quad and\quad} M_1^*=M+X+Y,
\]
where $X$ and $Y$ are defined in \eqref{XYdef} and
$L$ and $M$ are as in Lemma~\ref{XY}. By that lemma we have $L_1=L_1^*$ and $M_1=M_1^*$
whenever $\cG$ holds. By Corollary~\ref{ccG} we have $\Pr(\cG^\cc)=o(n^{-100})$.
It thus suffices to show that $(L_1^*,M_1^*)$
satisfies a bivariate local limit theorem with parameters $(\E[L_1^*],\E[M_1^*])$ for the means
and $(\sigma_L^2,\sigma_M^2)$ for the variances.
Furthermore, since $\sigma_L^2=o(\sigma_M^2)$, setting $\tM_1=M_1^*-L_1^*$, this is equivalent to showing that 
$(L_1^*,\tM_1)$ satisfies a local limit theorem with parameters $(\E[L_1^*],\E[\tM_1])$ and
$(\sigma_L^2,\sigma_M^2)$.

By Lemma~\ref{XY}, $(X,Y)$ is independent of $(H_1,H_2^-)$.
Since $L$ and $M$ are determined by $H_1$ and $H_2^-$,
we see that $(X,Y)$ is independent of $(L,M)$,
and hence of $(L,M-L)$.
Now
\begin{equation}\label{sum}
 (L_1^*,\tM_1) = (L,M-L) + (X,Y),
\end{equation}
with the summands independent.
Since $L_1^*=L_1$ with probability $1-o(n^{-100})$, we have
\[
 \Var[L_1^*]\sim \Var[L_1]\sim ne^{-d} = \sigma_L^2,
\]
by Lemma~\ref{EL1}.
Similarly, $\Var[M_1^*]\sim \Var[M_1]\sim dn/r=\sigma_M^2$ by Lemma~\ref{VM1}.
Since $\sigma_L=o(\sigma_M)$ it follows that
\[
 \Var[\tM_1] \sim \sigma_M^2.
\]
From \eqref{VEp} and \eqref{VEi} we have 
\[
 \Var[X] \sim \sigma_L^2 \hbox{\quad and\quad} \Var[Y]\sim \sigma_M^2.
\]
From the independence in \eqref{sum}, we have $\Var[L_1^*]=\Var[L]+\Var[X]$,
which implies $\Var[L]=o(\sigma_L^2)$. Similarly, $\Var[M-L]=o(\sigma_M^2)$.
The result now follows from Lemma~\ref{EpiLLT}, the independence in \eqref{sum}, and Lemma~\ref{smalloff}.
\end{proof}

\section{Proof of Theorem~\ref{thenum}}\label{sec_enum}

Our aim in this section is to deduce our enumerative result, Theorem~\ref{thenum}, from the probabilistic
one, Theorem~\ref{thLLT}. We start with a lemma giving the asymptotic behaviour of the quantity
$\xi=\xi(\bd)$ appearing in Theorem~\ref{thenum}.

Recall that for $r\ge 3$ we define a function $\Phi_r$ on $(0,1)$ by
\begin{equation*}%\label{phidef}
 \Phi_r(\xi) = \frac{ \log(1/\xi) (1-\xi^r)}{(1-\xi^{r-1})(1-\xi)},
\end{equation*}
and that $\Phi_r$ is a decreasing bijection between $(0,1)$ and $(r/(r-1),\infty)$.
If $r\ge 3$ then
\begin{equation}\label{Phiasy}
 \Phi_r(\xi) = \log(1/\xi)(1+\xi +O(\xi^2)) \sim  \log(1/\xi) \hbox{\quad as\quad} \xi\to 0.
\end{equation}
Recall also that $F_r(\bd)$ is then defined by \eqref{Fdef} with $\xi=\Phi_r^{-1}(\bd)$.

\begin{lemma}\label{asy}
Fix $r\ge 3$. For $\bd>r/(r-1)$ let $\xi(\bd)=\Phi_r^{-1}(\bd)$. Then as $\bd\to\infty$ we have
\begin{equation}\label{xiasy}
 \xi = e^{-\bd} + \bd e^{-2\bd} + O(\bd^2 e^{-3\bd})
\end{equation}
and
\begin{equation}\label{Fasy}
  F_r(\bd) = e^{-\bd} + \frac{\bd+1}{2}e^{-2\bd} + O(\bd^2 e^{-3\bd}).
\end{equation}
For $r=2$, we have $\xi=e^{-\bd}+2\bd e^{-2\bd}+O(\bd^2 e^{-3\bd})$ and
$F_2(\bd) = e^{-\bd} + (\bd+1/2)e^{-2\bd} + O(\bd^2 e^{-3\bd})$.
\end{lemma}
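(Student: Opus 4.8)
The plan is to derive the asymptotic expansions for $\xi$ by bootstrapping the defining relation $\Phi_r(\xi)=\bd$, and then to substitute into the formula \eqref{Fdef} for $F_r(\bd)$, expanding each term.

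First I would establish the expansion of $\xi$. From \eqref{Phiasy} we have $\Phi_r(\xi)=\log(1/\xi)(1+\xi+O(\xi^2))$, so the equation $\Phi_r(\xi)=\bd$ reads $\log(1/\xi)(1+\xi+O(\xi^2))=\bd$. A first, crude consequence is $\log(1/\xi)\sim\bd$, hence $\xi=e^{-\bd(1+o(1))}$, so in particular $\xi\to 0$ and $\xi=e^{-\bd+o(\bd)}$. Plugging this back, $\xi = O(e^{-\bd/2})$ say, which already gives $\log(1/\xi) = \bd/(1+\xi+O(\xi^2)) = \bd(1-\xi+O(\xi^2))$, i.e. $\xi = e^{-\bd}e^{\bd\xi + O(\bd\xi^2)}$. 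Since $\bd\xi = \bd e^{-\bd+o(\bd)} = o(1)$, we get $\xi = e^{-\bd}(1+\bd\xi + O(\bd^2\xi^2 + \bd\xi^2))$. Substituting the zeroth-order value $\xi = e^{-\bd}(1+o(1))$ into the correction term $\bd\xi$ yields $\xi = e^{-\bd} + \bd e^{-2\bd} + O(\bd^2 e^{-3\bd})$, which is \eqref{xiasy}. The bookkeeping of error terms (checking $\bd^2\xi^2 e^{-\bd} = O(\bd^2 e^{-3\bd})$, etc.) is routine. For $r=2$, the only change is that $\Phi_2(\xi) = \log(1/\xi)(1-\xi^2)/((1-\xi)^2) = \log(1/\xi)(1+\xi)/(1-\xi) = \log(1/\xi)(1+2\xi+O(\xi^2))$, so the coefficient $1$ in front of $\xi$ becomes $2$, giving $\xi = e^{-\bd} + 2\bd e^{-2\bd} + O(\bd^2 e^{-3\bd})$; the rest of the argument is identical.

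Next I would turn to $F_r(\bd)$. Using $\bd = \Phi_r(\xi)$ we can either work directly from \eqref{Fdef}, or — probably cleaner — use the identity
\[
 \exp(-sF_r(\bd)) = \bigl((1-\xi)^{-r}(1-\xi^r)\bigr)^m \bigl(\xi^{\xi/(1-\xi)}(1-\xi)\bigr)^s
\]
recorded just before \eqref{Frho}, which with $\bd = rm/s$ gives
\[
 F_r(\bd) = \frac{\bd}{r}\log\frac{(1-\xi)^r}{1-\xi^r} - \frac{\xi}{1-\xi}\log\xi - \log(1-\xi).
\]
Now expand each term for $\xi\to 0$: $-\log(1-\xi) = \xi + \xi^2/2 + O(\xi^3)$; $\frac{\xi}{1-\xi}\log(1/\xi) = (\xi+\xi^2+O(\xi^3))\log(1/\xi)$; and $\frac{\bd}{r}\log\frac{(1-\xi)^r}{1-\xi^r}$ for $r\ge 3$ equals $\frac{\bd}{r}(r\log(1-\xi) - \log(1-\xi^r)) = \frac{\bd}{r}(-r\xi - r\xi^2/2 + \xi^r + O(\xi^3) + O(\xi^{2(r-1)}))$, which for $r\ge 3$ is $-\bd\xi - \bd\xi^2/2 + O(\bd\xi^3)$ (the $\xi^r$ term is absorbed since $r\ge 3$). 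Collecting, the $\log(1/\xi)$-weighted terms combine with the leading $-\bd\xi$ term via the relation $\log(1/\xi) = \bd(1-\xi+O(\xi^2))$ to cancel the large $-\bd\xi$ piece and leave lower-order contributions; substituting $\xi = e^{-\bd} + \bd e^{-2\bd} + O(\bd^2 e^{-3\bd})$ throughout should produce $F_r(\bd) = e^{-\bd} + \frac{\bd+1}{2}e^{-2\bd} + O(\bd^2 e^{-3\bd})$. For $r=2$ the term $\frac{\bd}{r}\log\frac{(1-\xi)^r}{1-\xi^r} = \frac{\bd}{2}\log\frac{(1-\xi)^2}{1-\xi^2} = \frac{\bd}{2}\log\frac{1-\xi}{1+\xi} = \frac{\bd}{2}(-2\xi - 2\xi^3/3 - \cdots) = -\bd\xi + O(\bd\xi^3)$, so the $-\bd\xi^2/2$ term is absent; combined with the changed expansion of $\xi$ this should yield $F_2(\bd) = e^{-\bd} + (\bd + 1/2)e^{-2\bd} + O(\bd^2 e^{-3\bd})$.

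The main obstacle is the cancellation in the $F_r$ computation: the individual terms are of order $\bd\xi \approx \bd e^{-\bd}$, which is much larger than the claimed answer $e^{-\bd}$, so one must track the expansions to enough orders ($\xi$ to order $\bd^2 e^{-3\bd}$, $\log(1/\xi)$ to order $\bd\xi^2$) and verify that the $O(\bd)$, $O(\bd\xi)\cdot\log$ etc. contributions cancel exactly, leaving genuinely smaller terms. This is a finite but somewhat delicate algebraic manipulation; the cleanest route is to set $\xi = e^{-\bd}(1+u)$ with $u = \bd e^{-\bd} + O(\bd^2 e^{-2\bd})$, expand $\log(1/\xi) = \bd - \log(1+u) = \bd - u + O(u^2)$, and substitute systematically, keeping all terms down to $O(\bd^2 e^{-3\bd})$ and discarding the rest. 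Everything else — the $\xi$-expansion and the separate $r=2$ bookkeeping — is a straightforward bootstrap.
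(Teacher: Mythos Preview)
Your proposal is correct and follows essentially the same route as the paper: bootstrap $\xi$ from $\log(1/\xi)=\bd(1-\xi+O(\xi^2))$ to get \eqref{xiasy}, then expand each term of \eqref{Fdef} in $\xi$, use the relation $\log\xi=-\bd(1-\xi+O(\xi^2))$ to effect the cancellation of the $\bd\xi$ terms, arriving at $F_r(\bd)=\xi-(\bd-1)\xi^2/2+O(\bd^2\xi^3)$, and finally substitute the expansion of $\xi$. The paper does exactly this (including the separate treatment of $r=2$), so your plan matches it; the substitution $\xi=e^{-\bd}(1+u)$ you mention as an alternative is not needed.
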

\begin{proof}
Since $\Phi_r$ is a decreasing bijection from $(0,1)$ to $(r/(r-1),\infty)$,
as $\bd\to\infty$ we have $\xi=\Phi_r^{-1}(\bd)\to 0$.
Hence, for $r\ge 3$, from \eqref{Phiasy} we have
\begin{equation}\label{lxi}
 \log\xi = -\bd(1-\xi+O(\xi^2)).
\end{equation}
It follows (multiplying by $\xi$) that $\bd\xi\to 0$. Also, from \eqref{lxi},
\[
 \xi = e^{-\bd} e^{\bd\xi+O(\bd\xi^2)} = e^{-\bd} (1+\bd\xi +O(\bd^2\xi^2))
 = e^{-\bd}+\bd e^{-2\bd} + O(\bd^2e^{-3\bd}).
\]
From \eqref{Fdef}, for $r\ge 3$ we have
\[
 F_r(\bd) = -\bd(\xi+\xi^2/2) - (\xi+\xi^2)\log\xi + \xi+\xi^2/2+O(\xi^3\bd+\xi^3|\log\xi|+\xi^3).
\]
Since $|\log\xi|\sim\bd$ as $\bd\to\infty$, we may write the error term as $O(\bd\xi^3)$.
Substituting in \eqref{lxi}, it follows that
\begin{eqnarray*}
 F_r(\bd) &=& -\bd\xi -\bd\xi^2/2+\bd\xi+\bd\xi^2-\bd\xi^2+\xi+\xi^2/2+O(\bd^2\xi^3) \\
    &=& \xi -(\bd-1)\xi^2/2+O(\bd^2\xi^3).
\end{eqnarray*}
Substituting in \eqref{xiasy} gives \eqref{Fasy}. We omit the (similar) calculations for $r=2$.
\end{proof}

Recall that we write $C_r(s,m)$ for the number of connected $r$-uniform hypergraphs
on $[s]$, and $P_r(s,m)$ for the probability that an $m$-edge $r$-uniform hypergraph
on $[s]$ chosen uniformly at random is connected. Clearly,
$P_r(s,m)=C_r(s,m)/\binom{N}{m}$, where $N=\binom{s}{r}$.
Thus the asymptotic formulae \eqref{Pform} and \eqref{Cform} for $P_r(s,m)$ and $C_r(s,m)$
are equivalent modulo a calculation, which we now carry out.

\begin{lemma}\label{lPC}
Let $r\ge 2$ and let $m=m(s)=o(s^{4/3})$. If $r\ge 3$ then
\begin{equation}\label{Nm1}
 \binom{\binom{s}{r}}{m} \sim \frac{s^{rm}}{m!r!^m} e^{-(r-1)\bd/2},
\end{equation}
as $s\to\infty$, where $\bd=rm/s$. If $r=2$, then
\begin{equation}\label{Nm2}
 \binom{\binom{s}{r}}{m} \sim \frac{s^{rm}}{m!r!^m} e^{-(r-1)\bd/2-\bd^2/4} .
\end{equation}
\end{lemma}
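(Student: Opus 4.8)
The plan is to start from the exact identity
\[
 \binom{\binom{s}{r}}{m} = \frac{N^m}{m!}\prod_{j=0}^{m-1}\Bb{1-\frac jN}, \qquad N:=\binom sr ,
\]
and to estimate the two factors $N^m$ and $\prod_{j=0}^{m-1}(1-j/N)$ separately. Since $m=o(s^{4/3})$ and $r\ge 2$, we have $m/N = O(m/s^r)\to 0$, so every logarithm below may be Taylor expanded.

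For $N^m$, write $\binom sr = \frac{s^r}{r!}\prod_{i=1}^{r-1}(1-i/s)$ and take logarithms; since $r$ is fixed, $\sum_{i=1}^{r-1}\log(1-i/s) = -\binom r2/s + O(1/s^2)$, so
\[
 m\log N = mr\log s - m\log r! - \frac{\binom r2\, m}{s} + O(m/s^2).
\]
The key algebraic identity is $\binom r2\, m/s = \frac{r(r-1)}{2}\cdot\frac{\bd}{r} = \frac{(r-1)\bd}{2}$, using $\bd=rm/s$, while $O(m/s^2)=o(1)$. Exponentiating gives $N^m = \frac{s^{rm}}{r!^m}e^{-(r-1)\bd/2}(1+o(1))$. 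This already produces the factor $e^{-(r-1)\bd/2}$ common to \eqref{Nm1} and \eqref{Nm2}; the point is that although $(r-1)\bd/2\to\infty$, the discarded error in the exponent is genuinely $o(1)$.

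For the product, the Taylor expansion of $\log(1-x)$ gives $\sum_{j=0}^{m-1}\log(1-j/N) = -\binom m2/N + O(m^3/N^2)$. When $r\ge 3$ we have $\binom m2/N = O(m^2/s^3)=o(1)$ (as $m^2=o(s^{8/3})$) and $m^3/N^2 = O(m^3/s^6)=o(1)$, so the product is $1+o(1)$; combined with the estimate for $N^m$ this yields \eqref{Nm1}. When $r=2$ the leading term no longer vanishes: $\binom m2/N = \frac{m(m-1)}{s(s-1)} = \frac{m^2}{s^2} + O\bb{(m^2+ms)/s^3} = \frac{\bd^2}{4} + o(1)$ (again using $m=o(s^{4/3})$), while the error $m^3/N^2 = O(m^3/s^4)=o(1)$ precisely because $m=o(s^{4/3})$; hence the product is $e^{-\bd^2/4}(1+o(1))$, and multiplying by $N^m=\frac{s^{2m}}{2^m}e^{-\bd/2}(1+o(1))$ gives \eqref{Nm2}.

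The main obstacle is the $r=2$ case, where $\bd^2/4$ sits inside the exponential and tends to infinity: here it is essential that the discarded quadratic contribution $\sum_{j<m}(j/N)^2 = O(m^3/N^2)\asymp m^3/s^4$ be $o(1)$, not merely $o(\bd^2)$, which is exactly what the hypothesis $m=o(s^{4/3})$ buys (and explains why for $r\ge 3$ one could afford the weaker hypothesis $m=o(s^{3/2})$, the corresponding error then being $O(m^3/s^6)$). Apart from this, the argument is routine bookkeeping; one only needs that each additive $o(1)$ error in an exponent becomes a multiplicative $1+o(1)$ factor.
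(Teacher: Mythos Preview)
Your proof is correct and follows essentially the same approach as the paper: estimate $N^m$ via $\log N = r\log s - \log r! - \binom{r}{2}/s + O(s^{-2})$, then handle the product $\prod_{j<m}(1-j/N)$ by the Taylor expansion $\sum_j\log(1-j/N)=-\binom{m}{2}/N+O(m^3/N^2)$. The paper is more terse (writing $N=\frac{s^r}{r!}e^{-\binom{r}{2}/s+O(s^{-2})}$ and then quoting $\binom{N}{m}=\frac{N^m}{m!}\exp(-m^2/(2N)+O(m/N+m^3/N^2))$ directly), but the substance is identical, including your observation that for $r\ge 3$ the hypothesis could be relaxed to $m=o(s^{3/2})$.
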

\begin{proof}
Let
\[
 N=\binom{s}{r} = \frac{s(s-1)\cdots (s-r+1)}{r!} = \frac{s^r}{r!}e^{-\binom{r}{2}/s+O(s^{-2})}.
\]
Since $m=o(s^{4/3})=o(s^2)$, we have
\[
 N^m \sim \frac{s^{rm}}{r!^m} e^{-\binom{r}{2}m/s} = \frac{s^{rm}}{r!^m} e^{-(r-1)\bd/2}.
\]
Since $N=\Theta(s^r)$, if $r\ge 3$ then $m^2=o(N)$. Thus
$\binom{N}{m}\sim N^m/m!$, giving \eqref{Nm1}.

For \eqref{Nm2}, suppose that $r=2$. Then $N=s^2/2(1+O(1/s))$ and $m=\bd s/2$.
Since $m/N$ and $m^3/N^2$ are $o(1)$, and $\bd^2/s=O(m^2/s^3)=o(1)$, we have
\begin{multline*}
 \binom{N}{m} = \frac{N^m}{m!} \exp\bb{-m^2/(2N) +O(m/N+m^3/N^2)} \\
 \sim \frac{N^m}{m!} \exp\left(-\frac{\bd^2s^2}{4s^2}(1+O(1/s))\right) \sim \frac{N^m}{m!} e^{-\bd^2/4},
\end{multline*}
and \eqref{Nm2} follows.
\end{proof}

We are finally ready to deduce Theorem~\ref{thenum} from Theorem~\ref{thLLT}.

\begin{proof}[Proof of Theorem~\ref{thenum}]
Throughout, we consider a function $m=m(s)$ with $m/s\to\infty$; all asymptotics are as $s\to\infty$.
Much of the time we suppress the dependence on $s$ in the notation.
We write
\[
 \bd = \bd(s) = \frac{rm}{s}
\]
for the average degree of an $r$-uniform hypergraph with $s$ vertices and $m$ edges.

Let us first deal with a simple case: when $\log s-\bd$ is bounded above. Passing to a subsequence,
we may assume that either (i) $\log s-\bd\to c$ for some constant $c\in \RR$, or (ii) $\log s-\bd\to-\infty$.
Let $\Hrsm$ be a hypergraph on $[s]$ with $m$ edges, chosen uniformly at random from all such hypergraphs.
Let $X=X_s$ denote the number of isolated vertices in $\Hrsm$. In case (i), a simple calculation shows
that
\[ 
 \E[X] = s \binom{\binom{s-1}{r}}{m} {\binom{\binom{s}{r}}{m}}^{-1} \to e^c.
\]
(This is to be expected, since the probability that a vertex is isolated is asymptotically $e^{-\bd}$.)
Similarly, for any fixed $k$ the $k$th factorial moment satisfies
\[ 
 \E[X(X-1)\cdots (X-k+1)] = s(s-1)\cdots (s-k+1) \binom{\binom{s-k}{r}}{m} {\binom{\binom{s}{r}}{m}}^{-1} \to e^{kc}.
\]
Now by a standard result (see, e.g., Theorem 1.22 in~\cite{BB:RG2})
it follows that $X=X_s$ converges in distribution to a Poisson distribution with mean $e^c$
as $s\to\infty$, and in particular, that $\Pr(X=0)\to \exp(e^{-c})$. A very simple argument counting cuts
shows that in this range, with high probability $\Hrsm$ has no component of size between $2$ and $s/2$, so
the probability $P_r(s,m)$ that $\Hrsm$ is connected satisfies
\[
 P_r(s,m) =\Pr(X=0)+o(1) \sim \exp(-e^c) \sim \exp(-se^{-\bd}).
\]
Since $\bd=\log s+O(1)$, we have $s\bd e^{-2\bd}\to 0$, so by Lemma~\ref{asy} $sF_r(\bd)=se^{-\bd}+o(1)$.
Thus we have $P_r(s,m)\sim \exp(-sF_r(\bd))$, proving \eqref{Pform} in this case.
For case (ii), it follows from the above and monotonicity
that $P_r(s,m)\to 1$, which again agrees with \eqref{Pform}.
In both cases (i) and (ii), provided $m=o(s^{4/3})$ (which \eqref{Cform} assumes),
relation \eqref{Cform} follows immediately from \eqref{Pform} and Lemma~\ref{lPC}.

In proving Theorem~\ref{thenum}, passing to a subsequence, we may assume either that
$\log s-\bd$ is bounded above, or that $\log s-\bd\to\infty$. We have covered
the first case above.
From now on we thus assume that
\begin{equation}\label{bds}
 \bd \to \infty \hbox{\quad and\quad} \log s-\bd \to \infty
\end{equation}
as $s\to\infty$. Then $m=\bd s/r=O(s\log s)=o(s^{4/3})$, so by Lemma~\ref{lPC}
either of \eqref{Pform} and \eqref{Cform} implies the other. We shall prove \eqref{Cform}.

When $s$ is large enough, we have $\bd> r/(r-1)$; we assume this from now on.
Then there is a unique solution $\xi=\xi(s)$ to \eqref{xidef}, i.e., to
\begin{equation*}
 \Phi_r(\xi) = \bd = \frac{rm}{s}.
\end{equation*}
Since $\bd\to\infty$ as $s\to\infty$ we have $\xi\to 0$.
Set
\begin{equation}\label{dsdef}
 d=d(s) = \frac{\log(1/\xi)}{1-\xi^{r-1}},
\end{equation}
noting that $d(s)\to\infty$. Then $\xi$ and $d$ solve the equation~\eqref{xieqn} (which
is just \eqref{dsdef} rearranged). Also, set
\[
 \tn = \tn(s) = \frac{s}{1-\xi}.
\]
The reason for these choices is that then we have
\begin{equation}\label{tnhit1}
 \tn(1-\xi) = s
\end{equation}
and
\begin{equation}\label{tnhit2}
  \frac{d(1-\xi^r)}{r}\tn = \frac{(1-\xi)\Phi_r(\xi)}{r} \tn = \frac{\Phi_r(\xi)}{r}s = m.
\end{equation}
We would like to apply Theorem~\ref{thLLT} with the parameters $\tn$ and $d$ just defined;
one trivial but annoying difficulty is that $\tn$ is not an integer. So set
\[
 n = n(s) = \ceil{\tn}.
\]
Since $n=\tn+O(1)$, from \eqref{tnhit1}, \eqref{tnhit2} and the fact that $0<\xi<1$ we see that
\begin{equation}\label{hit}
 n(1-\xi) = s +O(1) \hbox{\quad and\quad} \frac{d(1-\xi^r)}{r} n = m+O(d).
\end{equation}

We next verify that $n$ and $d$ satisfy the assumptions of Theorem~\ref{thLLT},
i.e., that $n\to\infty$, and $d$ and $\log n-d\to\infty$.
(The theorem assumes $d=d(n)$ is defined for every $n$, but there is no problem 
considering only a subsequence.)
Certainly, $n\ge \tn\ge s\to\infty$ as $s\to\infty$. We have already noted that $d=d(s)\to\infty$.
For the last condition, by \eqref{dsdef} and Lemma~\ref{asy} we have
\[
  d = \log(1/\xi) (1+O(\xi^{r-1})) = \bd + O(\bd e^{-\bd}).
\]
In particular,
\begin{equation}\label{dbdo1}
 d=\bd+o(1).
\end{equation}
Since $n=\tn+O(1) = s/(1-\xi)+O(1) \sim s$ we thus have
\begin{equation}\label{lnd}
 \log n -d = \log s-\bd +o(1) \to\infty.
\end{equation}
Thus all conditions of Theorem~\ref{thLLT} are satisfied. 

As in the statement of Theorem~\ref{thLLT}, set
\begin{equation}\label{psdef}
 p = d \frac{(r-1)!}{n^{r-1}}.
\end{equation}
In this section, $d$ and $n$ are functions of $s$, so this defines a function $p(s)$;
as usual, we suppress the dependence on $s$.
Let $\sigma_L^2 = n e^{-d}$ and $\sigma_M^2= nd/r$, as in \eqref{sLM}.
Under the assumptions of Theorem~\ref{thLLT}
(which we have just verified)
we have $\sigma_L\to\infty$ and $d=o(\sigma_M)$. Hence, by \eqref{hit} and
\eqref{ebds}, the values
$s$ and $m$ are within $o(1)$ standard deviations of the expectations of the numbers
$L_1$ and $M_1$ of vertices and edges in the largest component of the (binomial) random hypergraph $\Hrnp$.
Thus, by Theorem~\ref{thLLT},
\begin{equation}\label{p1}
 \Pr( L_1=s, M_1=m ) \sim \frac{1}{2\pi\sigma_L\sigma_M} \sim \frac{\sqrt{r}e^{d/2}}{2\pi n\sqrt{d}}.
\end{equation}

Recalling that $n\sim s$, for $s$ large enough we have $n<2s$, so the hypergraph $\Hrnp$ can
have at most one component with $s$ or more vertices. Writing $Z=Z(\Hrnp)$ for the number
of components with $s$ vertices and $m$ edges, we thus have
\[
 \Pr( L_1=s,M_1=m) = \Pr(Z=1) = \E[Z].
\]
Hence, by linearity of expectation,
\begin{equation}\label{p2}
  \Pr( L_1=s,M_1=m) = \binom{n}{s} C_r(s,m) p^m (1-p)^{M-m},
\end{equation}
where
\[
 M = \binom{n}{r} - \binom{n-s}{r}
\]
is the number of possible hyperedges meeting a given set of $s$ vertices.

From \eqref{p1} and \eqref{p2} we see that
\begin{equation}\label{Cform1}
 C_r(s,m) \sim \frac{\sqrt{r}e^{d/2}}{2\pi n\sqrt{d}} {\binom{n}{s}}^{-1} \bb{p^m (1-p)^{M-m}}^{-1}.
\end{equation}
The rest of the proof is `just' calculation, but this calculation is not so simple. A
significant hindrance is that we would eventually like to work in terms of $s$, $m$, $\bd=rm/s$
and the implicitly defined $\xi=\Phi_r^{-1}(\bd)$.
The quantity $\tn=s/(1-\xi)$ is a simple function of these variables, but $n=\ceil{\tn}$
is not.
Morally speaking, rounding to $n$ should make no difference, but showing this seems
to require some work: because of the large exponents appearing in \eqref{Cform1}, the
very small relative change of replacing $n$ by $\tn$ in the various factors in \eqref{Cform1}
can change these factors by a large amount even though, as we shall see, it does
not significantly change (a suitably adapted form of) the whole formula.
The last factor in \eqref{Cform1} is perhaps the hardest to deal with; fortunately, we can use a trick,
relating it to a binomial probability. The key point (established below) is that $Mp$ is rather
close to $m$.

Recall that, crudely,
\[
 n \sim \tn \sim s,
\]
and, from Lemma~\ref{asy} and \eqref{dbdo1}, that
\begin{equation}\label{xisim}
 \xi \sim  e^{-\bd} \sim e^{-d}.
\end{equation}
Thus,
\begin{equation}\label{xininf}
 \xi n \sim ne^{-\bd} \sim se^{-\bd} \to\infty,
\end{equation}
using \eqref{bds} in the last step. Turning to $n-s$, recalling
\eqref{tnhit1} and that $n=\ceil{\tn}$,
we have the rather accurate bound
\begin{equation}\label{nsacc}
 n-s = \tn-s +O(1) =\xi\tn +O(1) = \xi n +O(1).
\end{equation}
We shall need this later, though often the simpler consequence
\begin{equation}\label{nssimple}
 n-s \sim \xi n \sim n e^{-\bd}
\end{equation}
will suffice.

Since
\[
 r! \binom{x}{r} = x(x-1)\cdots (x-(r-1)) = x^r - \binom{r}{2} x^{r-1} + O(x^{r-2}),
\]
we have
\begin{eqnarray*}
 M &=& \frac{n^r-\binom{r}{2}n^{r-1} - (n-s)^r}{r!} + O( n^{r-2} + (n-s)^{r-1}) \\
 &=& \frac{n^r}{r!} \left(1-\binom{r}{2}\frac{1}{n} - \left(1-\frac{s}{n}\right)^r\right)
    +O(n^{r-2}+n^{r-1}e^{-(r-1)\bd}),
\end{eqnarray*}
using \eqref{nssimple} in the last step.
Since $r-1\ge 1$ and $n e^{-\bd}\to\infty$, we have
$n^{r-2}+n^{r-1}e^{-(r-1)\bd} = O(n^{r-1}e^{-\bd})$.
As the `cross term' $(\binom{r}{2}/n) (1-s/n)^r$ is of order
$O((n-s)^r n^{-r-1}) = O(n^{-1}e^{-\bd})$, we thus have
\begin{equation}\label{Macc}
 M = \frac{n^r}{r!} \left(1-\binom{r}{2}\frac{1}{n}\right)
 \left(1 - \left(1-\frac{s}{n}\right)^r\right) \left(1+O(n^{-1}e^{-\bd})\right).
\end{equation}

We shall need this accurate estimate later; for the moment, something simpler suffices.
From \eqref{nsacc} we have
\begin{equation}\label{ons}
 1-\frac{s}{n} = \frac{n-s}{n} = \frac{\xi n+O(1)}{n} = \xi+O(1/n).
\end{equation}
Hence \eqref{Macc} implies the cruder bound
\[
 M = \frac{n^r}{r!} (1 -\xi^r)(1+O(n^{-1})).
\]
Recalling the definition \eqref{psdef} of $p$, it follows that
\[
 Mp = \frac{dn}{r} ( 1-\xi^r) + O(d) = m+O(d),
\]
where the last step is from \eqref{hit}.
Now $p=o(1)$, while certainly $m=\bd s/r\to\infty$ and $d=o(\sqrt{m})$
(since $d\le\log n$ and $m/n\sim m/s\to\infty$). It follows that the probability
that a binomial random variable $\Bi(M,p)$ takes the value $m$ is asymptotically
\[
 \frac{1}{\sqrt{2\pi Mp(1-p)}} \sim \frac{1}{\sqrt{2\pi Mp}}\sim \frac{1}{\sqrt{2\pi m}}.
\]
In other words,
\[
 \binom{M}{m} p^m (1-p)^{M-m} \sim (2\pi m)^{-1/2}.
\]
Combining this with \eqref{Cform1} we see that
\begin{equation}\label{Cform2}
 C_r(s,m) \sim \frac{\sqrt{r}e^{d/2}}{2\pi n\sqrt{d}} {\binom{n}{s}}^{-1} \binom{M}{m} \sqrt{2\pi m}.
\end{equation}

From \eqref{dbdo1} we have $d\sim \bd$ and $e^{d/2}\sim e^{\bd/2}$.
Since $n\sim s$ and $rm/s=\bd$, we may simplify \eqref{Cform2} slightly to obtain
\begin{equation}\label{Cform3}
 C_r(s,m) \sim \frac{e^{\bd/2}}{\sqrt{2\pi s}} {\binom{n}{s}}^{-1} \binom{M}{m}.
\end{equation}
This formula may appear appealingly concise, but unfortunately it still involves
$n$, defined in a slightly unpleasant way (involving rounding), both directly
and in the definition of $M$. So we continue  with our manipulations.

Firstly, note for later that, from \eqref{ons},
\begin{equation}\label{sn}
 \frac{s}{n} = 1-\xi +O(1/n).
\end{equation}
As $s\to\infty$ we certainly have $n\to\infty$ and $n-s\to\infty$ (see \eqref{nssimple}
and \eqref{xininf}),
so by Stirling's formula and the estimate $s\sim n$ we have
\begin{equation*}%\label{nssim}
 {\binom{n}{s}}^{-1} = \frac{(n-s)!s!}{n!} \sim  \sqrt{2\pi(n-s)} (1-s/n)^{n-s} (s/n)^s.
\end{equation*}
Since $\frac{n-s}{s}\sim\frac{n-s}{n}\sim e^{-\bd}$ by \eqref{nssimple}, it follows from
\eqref{Cform3} that
\begin{equation}\label{Cformnew}
  C_r(s,m) \sim (1-s/n)^{n-s} (s/n)^s \binom{M}{m}.
\end{equation}

Now $M\sim n^r/r!$, while $m=\bd s/r\sim \bd n/r$. Since $\bd=O(\log n)$, for $r\ge 3$
it follows that $m^2=O(n^2\log^2 n)=o(M)$. Hence
\begin{equation}\label{Mm1}
  \binom{M}{m}\sim \frac{M^m}{m!}.
\end{equation}
For $r=2$ we have $m=\bd s/2$ and $M=n^2/2(1+O(1/n))=s^2/2(1+O(\xi))=s^2/2(1+O(e^{-\bd}))$,
using \eqref{sn} and \eqref{xisim}. Arguing as for \eqref{Nm2}, it follows that
\begin{equation}\label{Mm2}
 \binom{M}{m} \sim \frac{M^m}{m!} e^{-\bd^2/4}.
\end{equation}
Since, in any case, $m=o(n^2)$, we have
\[
 \left(1-\binom{r}{2}\frac{1}{n}\right)^m \sim e^{-\binom{r}{2}\frac{m}{n}}
 \sim e^{-\binom{r}{2}\frac{m}{s}} = e^{-(r-1)\bd/2},
\]
where in the second step we used that $s/n=1+O(e^{-\bd})=1+o(1/\bd)$ (from \eqref{sn})
and $m/n=O(\bd)$.
Also, since $m=O(\bd n)$, we have 
\[
 \left(1+O(n^{-1}e^{-\bd})\right)^m = \exp(O(\bd e^{-\bd})) \sim 1.
\]
Combining these estimates with \eqref{Mm1} and \eqref{Macc}, for $r\ge 3$ we find that
\begin{equation}\label{binMm}
 \binom{M}{m} \sim \frac{n^{rm}}{m!r!^m}  e^{-(r-1)\bd/2} \bb{ 1- (1-s/n)^r }^m .
\end{equation}
For $r=2$ we obtain the same formula with an extra factor of $e^{-\bd^2/4}$.
We write the formulae in the rest of the proof for the case $r\ge 3$; the remaining
estimates apply just as well when $r=2$, with the factor $e^{-\bd^2/4}$ inserted where appropriate.
From \eqref{Cformnew} and \eqref{binMm}, we see that
\[
  C_r(s,m) \sim e^{-(r-1)\bd/2} \frac{n^{rm}}{m!r!^m}  \bb{ 1- (1-s/n)^r }^m (1-s/n)^{n-s} (s/n)^s .
\]
We may rewrite this as
\[
 C_r(s,m) \sim e^{-(r-1)\bd/2}\frac{s^{rm}}{m!r!^m} (s/n)^{-rm}\bb{ 1- (1-s/n)^r }^m \bb{(1-s/n)^{n/s-1} (s/n)}^s  ,
\]
and hence as
\begin{equation}\label{Cformx}
 C_r(s,m) \sim e^{-(r-1)\bd/2}\frac{s^{rm}}{m!r!^m}
  \bb{ x^{-r}(1- (1-x)^r) }^m
  \bb{(1-x)^{1/x-1} x}^s,
\end{equation}
where $x=s/n$. We would like to replace $x$ by $s/\tn=1-\xi$.
First, we substitute $y=1-x$, obtaining
\[
 C_r(s,m) \sim e^{-(r-1)\bd/2}\frac{s^{rm}}{m!r!^m} g(y)^m h(y)^s,
\]
where
\[ 
 g(y) = (1-y)^{-r}(1-y^r) \hbox{\quad and\quad} h(y) = y^{y/(1-y)}(1-y).
\]

Set
\[ 
 f(y) = g(y)^m h(y)^s.
\]
It is straightforward to check that
\[
 \bb{ \log g(y) }' = r\frac{1-y^{r-1}}{(1-y^r)(1-y)} = r+O(y)
\]
as $y\to 0$, and
\[
 \bb{ \log h(y) }' = \frac{\log y}{(1-y)^2} = \log y + O(y|\log y|)  = \log y+o(1),
\]
so
\[
 \bb{ \log f(y) }' = rm + s\log y  +O(my)+o(s).
\]
We wish to compare $f(\xi)$ with $f(1-s/n)$. From \eqref{ons}
we have $1-s/n = \xi+O(1/n)$.
Hence we need only consider values of $y$ with $|y-\xi|=O(1/n)$.
In this range,
\[
 \log y = \log\xi + O(1/(n\xi)) = \log\xi +o(1),
\]
recalling that $n\xi\to\infty$.
By Lemma~\ref{asy}, $\log\xi = -\bd+o(1) = -rm/s+o(1)$. Hence
\[
 \bb{ \log f(y)}' = O(my)+o(s) = o(s),
\]
where in the final step we used that $my=O(\bd s\xi)=O(\bd s e^{-\bd})=o(s)$.
Since $1-s/n=\xi+O(1/n)=\xi+O(1/s)$ it follows that
\[
 f(1-s/n) \sim f(\xi).
\]
This is exactly what we need to allow us to replace $x=s/n$ by $x=1-\xi$ in \eqref{Cformx}.

In conclusion, writing $\rho=1-\xi$, for $r\ge 3$ we have
\begin{eqnarray*}
 C_r(s,m) &\sim& e^{-(r-1)\bd/2}\frac{s^{rm}}{m!r!^m}
  \bb{ \rho^{-r}(1- (1-\rho)^r) }^m
  \bb{(1-\rho)^{1/\rho-1} \rho}^s  \\
 &=& e^{-(r-1)\bd/2}\frac{s^{rm}}{m!r!^m} \exp(-sF_r(\bd)),
\end{eqnarray*}
where the last step is from \eqref{Frho}.

When $r=2$ we obtain the same formula with an extra factor of $e^{-\bd^2/4}$,
from using \eqref{Mm2} in place of \eqref{Mm1}. This proves \eqref{Cform}.

As noted above, \eqref{Pform} follows from \eqref{Cform}
by Lemma~\ref{lPC}, so the proof is complete.
\end{proof}

%%%%%%%%%%%%%%%%%%%%%%%%%%%%%%%%%%%%%%%%%%%%%%%%%%%%%%%%%%%%%%%%%%%%%%%%%%%%%%%%%%%55

\section{Appendix}

In this appendix we briefly show that Theorem~\ref{thuniv} does indeed
extend the asymptotic formula given by Bender, Canfield and McKay~\cite{BCMcK}.
(It does not quite imply their result, since they have an explicit bound on the
$1+o(1)$ error term.)

Writing $P_2(s,m)$ for the probability that a random $m$-edge graph on $[s]$
is connected, Bender, Canfield and McKay showed that whenever $m=m(s)$
satisfies $m-s\to\infty$ and $m\le \binom{s}{2}-s$, then
\begin{equation}\label{bck}
 P_2(s,t) \sim e^{a(x)}\left(\frac{2e^{-x}y^{1-x}}{\sqrt{1-y^2}}\right)^s,
\end{equation}
where $x=m/s$, $y=y(x)$ is defined implicitly by
\begin{equation}\label{BCKdef}
 2xy=\log\left(\frac{1+y}{1-y}\right),
\end{equation}
and
\begin{equation}\label{adef}
 a(x) =x(x+1)(1-y)+\log(1-x+xy)-\tfrac{1}{2}\log(1-x+xy^2).
\end{equation}
Here we have changed the notation to match ours, and have simplified the more precise error term
given in~\cite{BCMcK}. Note that in our notation $x$ is simply $\bd/2$.

Let $\bd=2m/s$ and define $\xi$ as in \eqref{xidef} (with $r=2$), so
\begin{equation}\label{xidef2}
 \bd = \Phi_2(\xi) = \log(1/\xi)\frac{1+\xi}{1-\xi}.
\end{equation}
Set
\begin{equation}\label{yform}
 y  = \frac{1-\xi}{1+\xi}.
\end{equation}
Then from \eqref{xidef2} we have $\bd y = \log(1/\xi) = \log\left(\frac{1+y}{1-y}\right)$,
so \eqref{yform} defines the same $y=y(\bd)$ as in~\cite{BCMcK}.

Now using \eqref{yform}, $x=\bd/2$ and \eqref{xidef2} to write everything in terms
of $\xi$, one can check that
\begin{equation}\label{LF2}
 \log \left(\frac{2e^{-x}y^{1-x}}{\sqrt{1-y^2}}\right) 
 = -F_2(\bd),
\end{equation}
where $F_2(\bd)$ is defined in \eqref{Fdef}. (In fact, this computation is carried out in the
appendix to~\cite{smoothing}.)

Similarly, writing $G_2(\bd)$ (defined in \eqref{Gdef})
and $a(x)$ and hence $\exp(a(x))$ as a function of $\xi$,
using, for example, Maple, one can verify that 
\begin{equation}\label{aG2}
 \exp(a(x)) =  G_2(\bd).
\end{equation}
Indeed, it turns out that
\[
 x(x+1)(1-y) = \frac{ 2\bd\xi+\bd^2\xi }{2(1+\xi)} = g_2(\bd),
\]
\[
 1-x+xy = (1+\xi)^{-1} (1+\xi-\bd\xi) = (1+\xi)^{-1} a_2(\bd),
\]
and
\[
 1-x+xy^2 = (1+\xi)^{-2} \bb{ (1+\xi)^2-2\bd\xi } = (1+\xi)^{-2} b_2(\bd).
\]
These combine to give
\[
 \exp( a(x)) = \frac{a_2(\bd)}{\sqrt{b_2(\bd)}} e^{g_2(\bd)} = G_2(\bd)
\]
as claimed. By \eqref{LF2} and \eqref{aG2}
the $r=2$ case of the formula \eqref{universal2} in Theorem~\ref{thuniv}
does indeed match \eqref{bck}.

\end{document}